\newcommand{\R}{\mathbb{R}}
\newcommand{\E}{\mathbb{E}}
\newcommand{\PP}{\mathbb{P}}
\newcommand{\N}{\mathbb{N}}
\newcommand{\Z}{\mathbb{Z}}
\newcommand{\F}{\mathbb{F}}
\newcommand{\1}{\mathbbm{1}}
\renewcommand{\P}{\mathbb P}
\newcommand{\g}{\mathcal G}
\newcommand{\f}{\mathcal F}
\newcommand{\toop}{\stackrel{\PP}{\longrightarrow}}
\newcommand{\schw}{\stackrel{d}{\longrightarrow}}
\newcommand{\eqschw}{\stackrel{d}{=}}
\newcommand{\stab}{\stackrel{\mathcal{L}-s}{\longrightarrow}}
\newcommand{\toas}{\stackrel{\mbox{\tiny a.s.}}{\longrightarrow}}
\newcommand{\bee}{\begin{equation}}
\newcommand{\eee}{\end{equation}}
\newcommand{\beea}{\begin{array}}
\newcommand{\eeea}{\end{array}}
\renewcommand{\theequation}{\arabic{section}.\arabic{equation}}
\theoremstyle{plain}
\newtheorem{prop}{Proposition}[section]
\newtheorem{theo}[prop]{Theorem}
\newtheorem{lem}[prop]{Lemma}
\theoremstyle{definition}
\newtheorem{rem}[prop]{Remark}
\begin{document}

\title{Limit theorems for \\ stationary increments L\'evy driven moving averages}
\author{Andreas Basse-O'Connor\thanks{Department
of Mathematics, Aarhus University, 
E-mail: basse@math.au.dk.} \and 
Rapha\"el Lachi\`eze-Rey\thanks{Department
of Mathematics, University Paris Decartes, 
E-mail: raphael.lachieze-rey@parisdescartes.fr.} \and
Mark Podolskij\thanks{Department
of Mathematics, Aarhus University,
E-mail: mpodolskij@math.au.dk.}}

\maketitle

\begin{abstract}
In this paper we present some new limit theorems for power variation of $k$th order increments of  
stationary increments L\'evy driven moving averages. In this infill sampling setting, the asymptotic theory
gives very surprising results, which (partially) have no counterpart in the theory of discrete moving averages.
More specifically, we will show that the first order limit theorems and the mode of convergence 
strongly depend on the interplay between the given order of the increments,
the considered power $p>0$, the Blumenthal--Getoor index $\beta \in (0,2)$ of the driving pure jump L\'evy process $L$ and the behaviour of the kernel function
$g$ at $0$ determined by the power $\alpha $. 
First order asymptotic theory essentially comprise three cases: stable convergence towards a certain infinitely divisible distribution,
an ergodic type limit theorem and convergence in probability towards an integrated random process. We also prove the second order limit theorem 
connected to the ergodic type result. When the driving L\'evy process $L$ is a symmetric $\beta$-stable process we  obtain two different limits: 
a central limit theorem and convergence in distribution towards a  $(1-\alpha )\beta$-stable random variable.

\ \

{\it Keywords}: \
Power variation, limit theorems, moving averages, fractional processes,  stable convergence, high frequency data.\bigskip

{\it AMS 2010 subject classifications.} Primary ~60F05, ~60F15, ~60G22;
Secondary ~60G48, ~60H05.

\end{abstract}

\section{Introduction and main results} \label{Intro}
\setcounter{equation}{0}
\renewcommand{\theequation}{\thesection.\arabic{equation}}

In the recent years there has been an increasing interest in limit theory for power variations of stochastic processes. Power variation functionals 
and related statistics play a major role in analyzing the fine properties of the underlying model, in stochastic integration concepts and statistical
inference. In the last decade asymptotic theory for power variations of various classes of stochastic processes has been intensively 
investigated in the literature.
We refer e.g.\  to \cite{BGJPS,J,JP,PV} for limit theory for power variations of It\^o semimartingales, to \cite{BNCP09,BNCPW09,Coeu,gl89,nr09} 
for the asymptotic results in the framework of fractional Brownian motion and related processes, and to \cite{Rosenblatt, Rosenblatt-1} for investigations 
of power variation of the Rosenblatt process.
  
In this paper we study the power variation of \textit{stationary increments L\'evy driven moving averages}. More specifically, we consider 
an infinitely divisible process with  stationary increments  $(X_t)_{t\geq 0}$, defined on a  probability space $(\Omega, \mathcal F,   \mathbb P)$, 
given as
\begin{align} \label{def-of-X-43}
X_t= \int_{-\infty}^t \big\{g(t-s)-g_0(-s)\big\}\, dL_s,
\end{align} 
where $L=(L_t)_{t\in \R}$ is a symmetric L\'evy process on $\R$ with  $L_0=0$, that is, for all $u\in \R$, $(L_{t+u}-L_u)_{t\geq 0}$ is a L\'evy process indexed by $\R_+$ which distribution is invariant under multiplication with $-1$. Furthermore,  $g$ and $g_0$ are deterministic  functions from $\R$ into $\R$  vanishing on $(-\infty,0)$. 
In the further discussion
we will need the notion of \textit{Blumenthal--Getoor index} of $L$, which is defined via
\begin{align} \label{def-B-G}
\beta:=\inf\Big\{r\geq 0: \int_{-1}^1 |x|^r\,\nu(dx)<\infty\Big\}\in [0,2],
\end{align} 
where $\nu$ denotes the L\'evy measure of $L$. When $g_0=0$, process $X$ is a moving average,  and in this case $X$ is a stationary process.  If $g(s)=g_0(s)=s^\alpha_+$,
$X$ is a so called \text{fractional L\'evy process}. In particular, when $L$ is a $\beta$-stable L\'evy motion with $\beta \in (0,2)$,
$X$ is called a linear fractional stable motion and it is self-similar with index $H=\alpha+1/\beta$; see e.g.\ \cite{SamTaq}
(since in this case the stability index and the Blumenthal--Getoor index coincide, they are both denoted by $\beta$). 

Probabilistic analysis of stationary increments L\'evy driven moving averages such as semimartingale property, fine scale structure
and integration concepts, have been investigated in several papers. We refer to the work
of \cite{BasRosSM,BCI, bls12,  bm08,FK} among many others. 
However, only  few results on the power variations of such processes are presently available, exceptions to this are \cite[Theorem~5.1]{BCI} and   \cite[Theorem~2]{sg14};  see     Remark~\ref{rem3} for a closer discussion of a result from \cite[Theorem~5.1]{BCI}. These two results are concerned with certain power variation of fractional L\'evy process and have some overlad with our Theorem~\ref{maintheorem}(ii) for the linear fractional stable motion, but we apply different proofs.  The aim of this paper is to derive a rather complete picture of the first order asymptotic theory for power variation of the process $X$, and,
in some cases, the associated second order limit theory. We will see that the type of convergence and the limiting random variables/distributions
are quite surprising and novel in the literature. Apart from pure probabilistic interest, limit theory for power variations of 
stationary increments L\'evy driven moving averages give rise to a variety of statistical methods (e.g.\  identification and estimation of the
unknown parameters $\alpha$ and $\beta$) and it provides a first step towards asymptotic theory for power variation of stochastic processes,
which contain $X$ as a building block. In this context let us mention stochastic integrals with respect to $X$ and L\'evy semi-stationary processes,
which have been introduced in \cite{BNBV}.

To describe our main results we need to introduce some notation and a set of assumptions. In this work    
we consider the $k$th order increments $\Delta_{i,k}^{n} X$ of $X$, $k\in \N$, that are defined by 
\begin{align} \label{filter}
\Delta_{i,k}^{n} X:= \sum_{j=0}^k (-1)^j \binom{k}{j} X_{(i-j)/n}, \qquad i\geq k.
\end{align}
 For instance, we have that $\Delta_{i,1}^n X = X_{\frac{i}{n}}-X_{\frac{i-1}{n}}$ and $\Delta_{i,2}^n X = X_{\frac{i}{n}}-
2X_{\frac{i-1}{n}}+X_{\frac{i-2}{n}}$. Our main functional  is the power variation computed on the basis of $k$th order filters:
\begin{align} \label{vn}
V(p;k)_n := \sum_{i=k}^n |\Delta_{i,k}^{n} X|^p, \qquad p>0.
\end{align}
Now, we introduce the following set of assumptions on $g$ and $\nu$:
   
\noindent \textbf{Assumption~(A):}
The function $g\!:\R\to\R$ satisfies 
\begin{align}\label{kshs}
g(t)\sim c_0 t^\alpha\qquad \text{as } t\downarrow 0\quad \text{for some }\alpha>0\text{ and }c_0\neq  0, 
\end{align}
where $g(t)\sim f(t)$ as $t\downarrow 0$ means that $\lim_{t\downarrow 0}g(t)/f(t)= 1$. 
For some $\theta\in (0,2]$, $\limsup_{t\to \infty} \nu(x\!:|x|\geq t) t^{\theta}<\infty$ and $g - g_0$ is a bounded function in $L^{\theta}(\R_+)$.
 Furthermore,  $g$ is $k$-times continuous differentiable on $(0,\infty)$ and there  exists a $\delta>0$ such that  $|g^{(k)}(t)|\leq K t^{\alpha-k}$ for all $t\in (0,\delta)$,  $g^{(k)}\in L^\theta((\delta,\infty))$ and $|g^{(k)}|$ is decreasing on $(\delta,\infty)$.

\noindent \textbf{Assumption~(A-log):}
In addition to  (A) suppose that $\int_\delta^\infty |g^{(k)}(s)|^\theta \log(1/|g^{(k)}(s)|)\,ds<\infty$. 

Assumption~(A) ensures that the process $X$ is well-defined, cf.\ Section~\ref{sec2.4}. When $L$ is a $\beta$-stable L\'evy process, we always choose $\theta = \beta$ in assumption (A).
Before we introduce the main results, we need some more notation. 
Let $h_k\!:\R\to\R$ be given by 
\begin{align}\label{def-h-13}
h_k(x)&=  \sum_{j=0}^k (-1)^j \binom{k}{j} (x-j)_{+}^{\alpha},\qquad x\in \R,
\end{align} 
where  $y_+=\max\{y,0\}$ for all $y\in \R$. 
Let $\F=(\f_t)_{t\geq 0}$ be the  filtration generated by $(L_t)_{t\geq 0}$, $(T_m)_{m\geq 1}$  be a   sequence of $\F$-stopping times  that exhaust the jumps of $(L_t)_{t\geq 0}$, that is,  $ \{T_m(\omega):m\geq 1\}\cap \R_+ = \{t\geq 0: \Delta L_t(\omega)\neq 0\}$ and  $T_m(\omega)\neq T_n(\omega)$ for all $m\neq n$ with $T_m(\omega)<\infty$. Let     $(U_m)_{m\geq 1}$ be independent and uniform $[0,1]$-distributed random variables, defined on an extension $(\Omega', \mathcal F', \mathbb P')$ of the original probability space,
which are independent of $\f$. 

The following two theorems summarize the first and  second order limit theory for the power variation $V(p;k)_n$. We would like to emphasize 
part (i) of Theorem \ref{maintheorem} and part (i) of Theorem \ref{sec-order}, which are truly remarkable probabilistic results. We refer to \cite{Aldous,ren}
and to Section \ref{proofs-w3lkhj} for the definition of $\mathcal F$-stable convergence which will be denoted $\stab$.    
   
\begin{theo}[First order asymptotics.] \label{maintheorem}
Suppose (A) is satisfied and    assume that the Blumenthal--Getoor index satisfies $\beta<2$. 
We obtain the following three cases:
 
\begin{itemize}
\item[(i)] \label{mt-case-1} Suppose that (A-log) holds if  $\theta=1$. If $\alpha <k-1/p$ and $p>\beta$ we obtain the $\f$-stable convergence
\begin{equation} \label{part1}
n^{\alpha p}V(p;k)_n \stab |c_0|^p\!\!\!\!\! \!\sum_{m:\, T_m\in [0,1]} |\Delta L_{T_m}|^p V_m\quad\text{where}\quad 
V_m=\sum_{l=0}^{\infty} |h_k(l+U_m)|^p.
\end{equation} 
\item[(ii)] \label{mt-case-2} Suppose that  $L$ is a symmetric $\beta$-stable L\'evy process with scale parameter $\sigma>0$. If  $\alpha <k-1/\beta$ and   $p<\beta$ then it holds
\begin{align} \label{part2}
n^{-1+p(\alpha + 1/\beta)}V(p;k)_n \toop m_p
\end{align}
where  $m_p=|c_0|^p \sigma^p (\int_\R |h_k(x)|^\beta\,dx)^{p/\beta}\E[|Z|^p]$ and $Z$ is a symmetric $\beta$-stable random variable with scale parameter $1$. 

\item[(iii)] \label{mt-case-3} Suppose that $p\geq 1$. If  $p= \theta$ suppose in addition that (A-log) holds. For all    $\alpha>k-1/( \beta\vee p)$ 
%(and  $p\neq \beta$)  
we deduce
\begin{equation} \label{part3}
  n^{-1+pk}V(p;k)_n \toop \int_0^1 |F_u|^p\, du 
 \end{equation}
where $(F_u)_{u\in \R}$ is a measurable process satisfying
\begin{equation}
F_u=  \int_{-\infty}^u g^{(k)}(u-s) \,dL_s\quad \text{a.s.\ for all }u\in \R \quad \text{and}\quad \int_0^1 |F_u|^{p}\,du<\infty\quad \text{a.s.}
\end{equation} 
\end{itemize}
\end{theo}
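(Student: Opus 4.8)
The plan is to handle the three cases by a common device: decompose each increment $\Delta^n_{i,k}X$ into a ``small-scale'' part coming from the kernel near zero (which behaves like $c_0 t^\alpha$) and a ``remainder'' part, and then to identify which of the two dominates according to the sign of $\alpha-(k-1/(\beta\vee p))$. More precisely, writing the increment via the representation \eqref{def-of-X-43} and \eqref{filter}, one gets
\begin{align*}
\Delta^n_{i,k}X = \int_{-\infty}^{i/n} \Big(\sum_{j=0}^k(-1)^j\binom{k}{j} g\big(\tfrac{i-j}{n}-s\big)\Big)\,dL_s,
\end{align*}
and after the change of variables $s\mapsto (i-1)/n - s/n$ the inner bracket scales like $n^{-\alpha}h_k$ plus lower-order contributions, where $h_k$ is the function in \eqref{def-h-13}. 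I would first do this rescaling carefully, isolating a principal term $c_0 n^{-\alpha}\int h_k(\cdot)\,dL$ and controlling the error using the regularity of $g$ built into Assumption~(A) (the bound $|g^{(k)}(t)|\le Kt^{\alpha-k}$ near $0$ and the $L^\theta$ and monotonicity conditions away from $0$).

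For part (i), the key structural fact is that when $\alpha<k-1/p$ and $p>\beta$, the power variation is dominated by the \emph{big jumps} of $L$: near a jump time $T_m$, the increments that straddle $T_m$ pick up a term of size $|c_0|\,|\Delta L_{T_m}|\,n^{-\alpha}|h_k(l+U_m)|$, where $U_m$ encodes the (asymptotically uniform) position of the jump within its sampling interval and $l$ indexes subsequent intervals; the condition $\alpha<k-1/p$ guarantees $\sum_l|h_k(l+U_m)|^p<\infty$ (using $h_k(x)=O(x^{\alpha-k})$ at infinity, which follows from $h_k$ being a $k$th finite difference of $x\mapsto x^\alpha_+$), so $V_m$ is well-defined. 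The contribution of the continuous martingale part and of the small jumps is negligible after multiplication by $n^{\alpha p}$ precisely because $p>\beta$. I would make this rigorous by truncating $L$ at a level $\epsilon$, proving the stable convergence for the finite-activity truncated process (here the uniform variables $U_m$ arise from the position of finitely many jumps in shrinking intervals — a standard argument à la Jacod), and then letting $\epsilon\downarrow0$; the $\mathcal F$-stable convergence and the independence of $(U_m)$ from $\mathcal F$ come out of this limiting procedure. The (A-log) hypothesis when $\theta=1$ is needed to control a borderline logarithmic divergence in the small-jump error estimate.

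For part (ii), with $L$ a symmetric $\beta$-stable motion and $p<\beta$, the increments $\Delta^n_{i,k}X$ are, after rescaling by $n^\alpha$ and the $\beta$-stable scaling, approximately i.i.d.\ copies of $c_0\sigma(\int|h_k|^\beta)^{1/\beta}Z$ up to an additional $n^{-1/\beta}$ factor, so $n^{p(\alpha+1/\beta)}|\Delta^n_{i,k}X|^p$ has mean approximately $m_p/n$ and summing over $i$ and invoking a law of large numbers (the summands have moments of all orders $<\beta$, and $p<\beta$) gives \eqref{part2}. The main work is a quantitative comparison: showing that the normalized increments are close in $L^p$ to an exactly self-similar linear fractional stable motion increment, for which the stationarity and an ergodic/LLN argument apply directly; the stationary-increments structure of $X$ and a covariance/association bound or a direct second-moment (in the sense of $L^{p'}$ for $p<p'<\beta$) computation handle the variance of the sum.

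For part (iii), when $\alpha>k-1/(\beta\vee p)$, the kernel is smooth enough ($g^{(k)}$ integrable near $0$) that the $k$th increment is well-approximated by a Riemann-type term: $\Delta^n_{i,k}X\approx n^{-k}F_{i/n}$ with $F_u=\int_{-\infty}^u g^{(k)}(u-s)\,dL_s$, the existence and a.s.\ finiteness of which (and of $\int_0^1|F_u|^p\,du$) must be established from the $L^\theta$ integrability of $g^{(k)}$ in Assumption~(A). Then $n^{pk}|\Delta^n_{i,k}X|^p\approx|F_{i/n}|^p$ and $n^{-1}\sum_{i=k}^n|F_{i/n}|^p\to\int_0^1|F_u|^p\,du$ by a dominated-convergence / continuity-in-$L^p$ argument for the stationary process $(F_u)$; the hypotheses $p\ge1$ and (A-log) at $p=\theta$ are used to justify the interchange and to control the approximation error uniformly. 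I expect the main obstacle across all three parts to be the error analysis in the rescaling step — quantifying the difference between the true rescaled increment and its principal term uniformly in $i$ and with the correct power of $n$, especially at the boundary exponents where the logarithmic (A-log) corrections enter; this is where the precise form of Assumption~(A) (the interplay of the near-zero bound on $g^{(k)}$, its $L^\theta$-integrability at infinity, and the monotonicity of $|g^{(k)}|$) does all the heavy lifting.
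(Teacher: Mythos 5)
Your overall architecture matches the paper's for parts (i) and (iii): truncation of $L$ into big and small jumps with the small-jump contribution killed by $p>\beta$, uniform limits of the jump positions within sampling cells, and summability of $|h_k(l+U_m)|^p$ from $h_k(x)=O(x^{\alpha-k})$; and for (iii), pathwise $k$-fold differentiability of $X$ with derivative $F$ followed by a Riemann-sum argument (the paper realizes this as $X\in W^{k,p}$ a.s.\ via the Braverman--Samorodnitsky criterion, which is exactly where $p\geq 1$ enters). However, there are two genuine gaps. First, in part (i) you never address the hardest step of the compound Poisson case: showing that the \emph{rest term} $R_{i,n,\epsilon}=\int_{-\infty}^{i/n-\epsilon/2}g_{i,n}(s)\,dL_s$, i.e.\ the contribution of jumps far from the increment window, is $o(n^{-\alpha})$ \emph{uniformly in} $i\leq n$. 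These random variables are in general not integrable, so a term-by-term expectation bound fails; the paper needs a direct pathwise Poisson-integral bound for $\theta\in(0,1]$ (this is where (A-log) at $\theta=1$ is actually used --- not in the small-jump truncation error, as you suggest) and majorizing-measure estimates \`a la Marcus--Rosi\'nski to get the uniform-in-$(i,n)$ control when $\theta\in(1,2]$. Without some such uniform argument your ``principal term plus negligible remainder'' decomposition is only asserted.

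Second, in part (ii) the claim that the rescaled increments are ``approximately i.i.d.'' is false: they form a strongly dependent (long-memory) stationary sequence, and a variance or $L^{p'}$-moment computation cannot give the law of large numbers here since only $p<\beta$ is assumed (so $2p$ may exceed $\beta$ and no second moments exist), and the paper proves no covariance bounds in part (ii). The correct and essentially unavoidable route, which you mention only in passing, is: use self-similarity of $L$ to replace the triangular array by the stationary sequence $V_i=\int h_k(i-s)\,dL_s$, show $\E[|V_{k,n}-V_k|^p]\to 0$, note that a symmetric stable moving average is mixing hence ergodic, and apply Birkhoff's ergodic theorem. You should commit to that argument and drop the i.i.d./variance heuristic, which would lead you to impose the superfluous condition $p<\beta/2$.
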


We remark that,  
except the critical cases where $p= \beta$, $\alpha= k-1/p$ and $\alpha= k-1/\beta$, 
Theorem~\ref{maintheorem} covers all possible choices of $\alpha>0,\beta\in [0,2)$ and  $p\geq 1$. We also note that the limiting random variable in 
\eqref{part1} is infinitely divisible, see Section~\ref{sec2.4} for more details. In addition, we note that there is no convergence in probability in \eqref{part1} 
due to the fact that the random variables $V_m$, $m\geq 1$, are independent of $L$ and the properties of stable convergence. 
To be used in the next theorem we recall that a   totally right skewed $\rho$-stable random variable $S$ with $\rho>1$, mean zero and scale parameter $\eta>0$ has characteristic function given by 
\begin{equation}
\E[ e^{i \theta S} ] =\exp\Big( -\eta^\rho |\theta |^\rho \big( 1- i \mathrm{sign}(\theta)\tan(\pi \rho/2)\big)\Big),\qquad \theta\in \R. 
\end{equation}

For part (ii) of Theorem \ref{maintheorem}, which we will refer to as the ergodic case, we also show the second order asymptotic results. 
\begin{theo}[Second order assymptotics]\label{sec-order}
Suppose that assumption (A) is satisfied and  $L$ is a symmetric $\beta$-stable L\'evy process with scale parameter $\sigma>0$. 
Let $f:[0,\infty )\mapsto \R$ be given by  $f(t)=g(t)/t^{\alpha}$ for  $t>0$ and $f(0)= c_0$, and assume that $f$ is $k$-times  continuous right differentiable at $0$. For the below case (i) assume,  in addition, that $| g'(t)|\leq K t^{\alpha-1}$ for all $t>0$. 
\begin{itemize}
\item [(i)] If  $k=1$, $\alpha <1-1/\beta$ and $p<\beta/2$,  then it  holds that
\[
n^{1-\frac{1}{(1-\alpha)\beta}}\Big(n^{-1+p(\alpha + 1/\beta)}V(p;k)_n- m_p\Big) \schw S,
\]
where $S$ is a totally right skewed  $(1-\alpha)\beta$-stable random variable with mean zero and scale parameter  $\widetilde{\sigma}$, which is defined in  Remark~\ref{rem-const}(i).   
\item[(ii)]  \label{mt-case-6} If   $k\geq 2$, $\alpha <k-2/\beta$ and $p<\beta/2$ we deduce that
\begin{align} \label{part5}
\sqrt{n} \Big( n^{-1+p(\alpha + 1/\beta)}V(p;k)_n - m_p \Big) \schw \mathcal N(0, \eta^2),
\end{align} 
where the quantity $\eta^2$ is defined in Remark~\ref{rem-const}(ii).
\end{itemize}
\end{theo}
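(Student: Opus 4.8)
The plan is to reduce the second order behaviour of $V(p;k)_n$ to that of a sum of functionals of the stationary sequence of rescaled increments, and then to apply a limit theorem for partial sums of stationary mixing (or associated) sequences. First I would use assumption (A) together with the self-similarity of the $\beta$-stable L\'evy motion to write $n^{\alpha+1/\beta}\Delta_{i,k}^n X \approx |c_0|\,\sigma\, Z_i^{(n)}$, where $Z_i^{(n)} = \int_\R h_k\big(\tfrac{i}{n}\cdot n - s\big)\,dL'_s$ for an appropriate rescaled stable process $L'$; more precisely, the point is that the main term is $c_0 \int_{-\infty}^{i/n} g^{(k)}((i/n)-s)\,dL_s$-type, and after rescaling time by $n$ the kernel converges (by \eqref{kshs} and the differentiability hypotheses on $f$) to $|c_0| t^{\alpha-k}$ near the diagonal, producing the stationary moving average $Y_i := \int_\R h_k(i-s)\,dL''_s$ driven by a $\beta$-stable motion. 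The contribution of the ``tail'' of the kernel (the region where $g^{(k)}$ no longer looks like a power) must be controlled: this is where the quantitative hypothesis $|g'(t)|\le K t^{\alpha-1}$ in case (i), respectively the extra smoothness of $f$ at $0$ and the decay of $g^{(k)}$ from (A), enter, and one shows the replacement error is $o_\PP$ of the relevant normalisation.

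Having made this reduction, case (ii) becomes a central limit theorem for $\tfrac{1}{\sqrt n}\sum_{i=k}^n \big(|Y_i|^p - \E|Y_i|^p\big)$ where $(Y_i)$ is a stationary, strongly mixing sequence (indeed an $m$-dependent-like approximation is available since $h_k$ decays like $|x|^{\alpha-k}$ at infinity and $\alpha - k < -2/\beta$ forces $\sum_i |Y_i|^p$-type covariances to be summable — this is precisely the role of the hypothesis $\alpha<k-2/\beta$). The function $y\mapsto |y|^p$ with $p<\beta/2$ has enough integrability against the $\beta$-stable marginal for $\mathrm{Var}(|Y_i|^p)<\infty$ and for the covariance series $\eta^2 = \sum_{j\in\Z}\mathrm{Cov}(|Y_0|^p,|Y_j|^p)$ to converge absolutely; I would verify the covariance decay by a Taylor/coupling estimate on $h_k(j+U)-h_k(\cdot)$ showing $|\mathrm{Cov}(|Y_0|^p,|Y_j|^p)| = O(|j|^{(\alpha-k)\beta'})$ for a suitable exponent $>1$. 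Then a standard CLT for mixing stationary sequences (Ibragimov, or a martingale-blocking argument) gives \eqref{part5} with $\eta^2$ as in Remark~\ref{rem-const}(ii). For case (i), when $k=1$ the decay exponent $\alpha-1\in(-2/\beta,-1/\beta)$ is \emph{not} fast enough for a CLT: the partial sums of $|Y_i|^p$ are in the domain of attraction of a $(1-\alpha)\beta$-stable law. Here I would identify the heavy tail: $|Y_i|^p$ itself has all moments of order $<\beta/p$, but the \emph{dependence} through the long-range kernel $h_1(x)\sim c|x|^{\alpha-1}$ creates, after summation, a contribution $\sum_{i} \int h_1(i-s)\,dL_s$-type whose increments inherit a tail index $(1-\alpha)\beta$ from the interplay of $\alpha-1$ and $\beta$; a careful single-big-jump analysis (one large jump $\Delta L_T$ of size $x$ contributes $\sim |c_0 x|^p \sum_i |h_1(i - nT)|^p \sim |c_0 x|^p \cdot n \int|h_1(u)|^p du$ on a scale, but the \emph{fluctuation} around the mean is dominated by jumps at distance $r$ contributing $\sim r^{(\alpha-1)p}$, whose aggregated tail is $(1-\alpha)\beta$-stable) yields the totally right-skewed stable limit. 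The skewness is positive because $|Y_i|^p\ge 0$, so all the mass of the stable limit comes from the positive side.

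The main obstacle I anticipate is the second-order error analysis in the kernel replacement step — showing that replacing $g^{(k)}((i/n)-s)$ by its power-law proxy $|c_0|((i/n)-s)^{\alpha-k}$ (and truncating the long-memory tail) produces an error that is negligible at the \emph{finer} scales $\sqrt n$ (case (ii)) or $n^{1-1/((1-\alpha)\beta)}$ (case (i)), not merely at the first-order scale $n^{1-p(\alpha+1/\beta)}$. This requires the sharpened regularity assumptions ($f$ being $k$-times right differentiable at $0$, and $|g'(t)|\le Kt^{\alpha-1}$ globally in case (i)): a first-order Taylor expansion of $f$ at $0$ gives $g^{(k)}(t) = |c_0| t^{\alpha-k}(1 + O(t))$, and one must propagate this $O(t)=O(1/n)$ correction through the $p$-th power and the sum, checking it does not survive the limit. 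A secondary difficulty is justifying convergence of moments / uniform integrability to pin down the exact constants $\widetilde\sigma$ and $\eta^2$ rather than merely convergence in law along subsequences; for case (ii) this is handled by the $L^2$-bound from $p<\beta/2$, and for case (i) by truncating the stable driver and letting the truncation level go to infinity after $n$.
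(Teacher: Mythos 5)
For part (ii) your route is essentially the paper's: rescale by self-similarity to the stationary moving average $Y_i=\int_\R h_k(i-s)\,dL_s$, approximate by an $m$-dependent sequence (truncating the kernel), apply a CLT for $m$-dependent stationary arrays (the paper uses Berk's theorem rather than a mixing CLT), and let $m\to\infty$ using summable covariance bounds. One caution: your proposed ``Taylor/coupling estimate'' for $\mathrm{Cov}(|Y_0|^p,|Y_j|^p)$ is not an argument that is available here, since $|x|^p$ is neither bounded nor Lipschitz and $Y_0$ has infinite variance; the paper's Remark~\ref{rem4} points out that this covariance cannot be computed directly, and the entire technical core of the proof (Lemmas~\ref{helpLemma} and \ref{helplem2}) rests on the identity $|x|^p=a_p^{-1}\int_\R(1-e^{iux})|u|^{-1-p}\,du$ combined with the explicit characteristic function of $s_1Y_0-s_2Y_j$ — which is also the reason $\eta^2$ takes the form given in Remark~\ref{rem-const}(ii). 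Without a substitute for this identity, your covariance decay is an assertion rather than a proof. (A small confusion in your reduction step: the tangent kernel in this regime is the $k$-th \emph{finite difference} $h_k=D^k(x_+^\alpha)$, not the derivative $g^{(k)}$; the latter governs the regime of Theorem~\ref{maintheorem}(iii).)

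For part (i) there is a genuine gap. You name the correct limit and the correct intuition (single big jump, positivity forcing total right-skewness), but the proposal contains no mechanism that actually produces the exponent $(1-\alpha)\beta$, and the heuristic you offer tracks the wrong quantity. The paper's argument is a projection method in the spirit of Ho--Hsing/Surgailis: writing $V^n_r=|Y^n_r|^p-\E[|Y^n_r|^p]$, one replaces $V^n_r$ by $\sum_{j\ge1}\E[V^n_r\mid\g^1_{r-j}]$, where $\g^1_s$ is generated by the increment of $L$ over $(s,s+1]$; the replacement error is controlled by two nested telescoping martingale-difference decompositions and the von Bahr--Esseen inequality, and this is where most of the work in Section~\ref{sec5.1} lies. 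Each projection equals $\Phi_{\rho}(W)-\E[\Phi_\rho(W)]$ with $\Phi_\rho(y)=\E[|W'+y|^p]-\E[|W'|^p]$, and after resummation one is left with an i.i.d.\ sum of the variables $\overline\Phi(L_{r+1}-L_r)-\E[\overline\Phi(L_1)]$, where $\overline\Phi(x)=\sum_{j\ge1}\Phi_{\rho_0}(a_jx)$ and $a_j=j^\alpha-(j-1)^\alpha\sim\alpha j^{\alpha-1}$. The decisive fact is $\Phi_\rho(y)\asymp |y|^2\wedge|y|^p$ — the quadratic behaviour near $0$ reflects the Appell rank $2$ of $|x|^p-m_p$ — and it is exactly this that yields $\overline\Phi(x)\sim\kappa\,x^{1/(1-\alpha)}$ as $x\to\infty$; composing with $\P(|L_1|>x)\sim\tau_\beta x^{-\beta}$ gives a one-sided regularly varying tail of index $(1-\alpha)\beta$ for the i.i.d.\ summands, and the classical stable limit theorem finishes the proof. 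Your exponent $r^{(\alpha-1)p}$ is the size of the \emph{uncentered} contribution $|h_1(r+U)|^p$ of a jump to a single increment (the quantity relevant to Theorem~\ref{maintheorem}(i)); under the present hypotheses $p(1-\alpha)<1$, so $\sum_r r^{(\alpha-1)p}$ diverges, and this heuristic neither converges nor produces $x^{1/(1-\alpha)}$. Without the centering/projection step and the function $\overline\Phi$, the tail index $(1-\alpha)\beta$ and the scale $\tilde\sigma$ cannot be derived, and there is no control of the error made in reducing to independent summands.
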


This paper is structured as follows. Section~\ref{sec2} presents some remarks about the nature and applicability of the main results, and it also
includes a discussion about related problems.  Section~\ref{secPrel} introduces some preliminaries.  
We state the proof of Theorem
\ref{maintheorem} in Section~\ref{proofs-w3lkhj}, while the proof of Theorem \ref{sec-order} is demonstrated in Section \ref{sec5}.
Some technical lemmas are deferred to the Appendix.

%%%%%%%%%%%%%%%%%%%
\section{Related results, remarks and extensions} \label{sec2}
%%%%%%%%%%%%%
\setcounter{equation}{0}
\renewcommand{\theequation}{\thesection.\arabic{equation}}
In this section we will give a review of related asymptotic results in the literature, present some intuition behind our limit theory
and discuss possible extensions.

%%%%%%%%%%%%%%%%
\subsection{Fine properties of stationary increments L\'evy driven moving averages} \label{sec2.1}
%%%%%%%%%%%%%%%%
In this subsection we will discuss the probabilistic properties of the process $X$ defined at \eqref{def-of-X-43}, such as
semimartingale property and small scale behaviour, and their consequences for the limit theory. For simplicity suppose that $L$ is a symmetric $\beta$-stable L\'evy process. 

Suppose, in addition, that $g'$ satisfies the lower bound $K t^{\alpha-1}\leq |g'(t)|$ for all $t\in (0,1)$ and a constant   $K>0$. By \cite[Example~4.9]{BasRosSM} it follows that  $X$ is a semimartingale if and only if $\alpha>1-1/\beta$, which is exactly condition (iii) in Theorem~\ref{maintheorem} when $k=1$ and $p<\beta$.

%Here we restrict our discussion to the case of $g_0=0$.
%
%A formal differentiation gives the identity
%\begin{align*}
%dX_t = g(0+) dL_t +  \left( \int_{-\infty}^t g'(t-u) dL_u \right) dt.
%\end{align*} 
%From this relationship it is obvious that the process $(X_t)_{t\geq 0}$ is a semimartingale whenever $g(0+)<\infty$ and $\int_{-\infty}^t g'(t-u) dL_u$
%is well-defined. In this case the limit theory for power variation (of first order differences) is quite well understood; we refer to the book \cite{JP} \marginpar{This is case (iii) in Theorem~1.1, see B. and Rosinski}
%for a detailed exposition. However, under conditions of Theorem~\ref{maintheorem}, the finiteness of the integral $\int_{-\infty}^t g'(t-u) dL_u$
%is typically violated. For this reason we can not rely on classical asymptotic results for power variations of semimartingales. 

To better understand the limit theory stated in Theorem \ref{maintheorem}(ii) and Theorem \ref{sec-order}, which both refer to the ergodic case, we need
to study the small scale behaviour of the stationary increments L\'evy driven moving averages $X$. We intuitively deduce the 
following approximation for the increments of $X$ for a small $\Delta>0$:
\begin{align*}
X_{t+\Delta} - X_t &= \int_{\R} \{g(t+\Delta -s) - g(t -s)\}\, dL_s \\
& \approx \int_{t+\Delta -\epsilon }^{t+\Delta} \{g(t+\Delta -s) - g(t -s)\} \,dL_s \\
& \approx c_0 \int_{t+\Delta -\epsilon }^{t+\Delta} \{(t+\Delta -s)_{+}^\alpha  - (t -s)_{+}^\alpha \} \,dL_s \\
& \approx c_0 \int_{\R} \{(t+\Delta -s)_{+}^\alpha  - (t -s)_{+}^\alpha \} \,dL_s = \widetilde{X}_{t+\Delta} - \widetilde{X}_t,
\end{align*} 
where 
\begin{align} \label{flm1}
\widetilde{X}_t := c_0 \int_{\R} \{(t-s)_{+}^\alpha  - ( -s)_{+}^\alpha \}\, dL_s,
\end{align} 
and $\epsilon >0$ is an arbitrary small real number with $\epsilon \gg \Delta$. In the classical terminology $\widetilde{X}$ is called
the \textit{tangent process of $X$}.   
The formal proof of this first order approximation, which will be 
demonstrated in Section~\ref{proofs-w3lkhj}, relies 
on assumption (A) and the fact that, under conditions of Theorem \ref{maintheorem}(ii), the weight $g(t+\Delta -s) - g(t -s)$ attains asymptotically 
highest values when $s\approx t$, since $g'$ explodes at $0$. Recall that the process $\widetilde{X}$ is the linear  fractional stable motion.
In particular, under the assumptions of Theorem \ref{maintheorem}(ii), it is a linear fractional stable motion with $\beta$-stable marginals 
and self-similarity index $H=\alpha +1/\beta$.   
%The key fact to learn from this approximation is that, under conditions of Theorem~\ref{maintheorem}(ii),  
%the fine structure of a L\'evy moving average process $X$ with symmetric $\beta$-stable driver $L$ is similar to the fine structure
%of a linear fractional $\beta$-stable L\'evy motion $\widetilde{X}$. 
Thus, one may transfer the first order asymptotic 
theory for power variation of $\widetilde{X}$ to the corresponding results for power variation of $X$. However, the law of large numbers
for power variation of $\widetilde{X}$ is easier to handle than the original statistic due to self-similarity property of $\widetilde X$, which allows to transform the original triangular observation scheme
into a usual one when studying distributional properties. Then, the standard ergodic limit theory becomes applicable. Indeed, this is exactly
the method of proof of Theorem \ref{maintheorem}(ii). We remark however that it is much more technical to use the relationship between $X$ and $\widetilde{X}$
for the proof of the second order asymptotic theory in Theorem \ref{sec-order}. In fact, we use a more direct approach to show the results of Theorem \ref{sec-order}.
%In fact, we will prove this result directly without involving
%the tangent process $\widetilde{X}$.  

%%%%%%%%%%%%%%%
\subsection{Limit theory in the Gaussian case} \label{sec2.2}
%%%%%%%%%%%%%%%
Throughout this subsection we recall the asymptotic theory for power variation of fractional Brownian motion $(B_t^H)_{t\geq 0}$ with Hurst
parameter $H\in (0,1)$ and relate it to our limit theory. The main demonstrated results have been established in the classical work \cite{BM83,T79}.
We write $V(B^H,p;k)_n$ to denote the power variation statistics defined at \eqref{vn} associated with the fractional Brownian motion $(B_t^H)_{t\geq 0}$. 

First of all, we observe the law of large numbers
\begin{align*}
n^{-1+pH}V(B^H,p;k)_n \toop m_p:= \E[|n^H\Delta_{i,k}^{n} B^H|^p], 
\end{align*} 
which follows from the ergodic theorem (note that $m_p$ is independent of $n$ due to self-similarity property of $B^H$). 
The associated weak limit theory depends on the interplay between the correlation kernel of the fractional Brownian noise 
and the \textit{Hermite rank} of the function $h(x)=|x|^p - m_p$. Recall that the correlation kernel
$\rho_k(j)=\text{corr}(n^H\Delta_{k,k}^{n} B^H, n^H\Delta_{k+j,k}^{n} B^H)$ of $k$th order differences
of the fractional Brownian motion satisfies that 
\[
|\rho_k(j)|\leq K j^{2H-2k} \qquad \text{for }j\geq 1,
\]
for some $K>0$. The Hermite expansion of the function $h$ is defined as
\[
h(x)=|x|^p - m_p = \sum_{l=2}^{\infty} \lambda_l H_l(x),
\]
where $(H_l)_{l\geq 0}$ are Hermite polynomials, i.e.\ 
\[
H_0(x)=1 \quad \text{and} \quad H_l (x)= (-1)^l \exp(x^2/2) \frac{d^l}{dx^l} \{-\exp(x^2/2)\} \quad \text{for } l\geq 1.
\]
The Hermite rank of $h$ is the smallest index $l$ with $\lambda_l \not =0$, which is $2$ in our case. The condition for the validity
of a central limit theorem associated to a standardized version of $V(B^H,p;k)_n$ is then
\[
\sum_{j=1}^\infty \rho_k^2 (j)<\infty, 
\] 
where the power $2$ indicates the Hermite rank of $h$. The latter is obviously fulfilled for any $k\geq 2$ and also for $k=1$ if $H\in (0,3/4)$. 
The next result is a famous statement from \cite{BM83,T79}. The Gaussian limit case is usually referred to as Breuer--Major central limit theorem. 

\begin{theo} \label{mb-th}
The following assertions hold:
\begin{itemize}
\item[(i)] Assume that $k\geq 2$ or $k=1$ and $H\in (0,3/4)$. Then  the central limit theorem holds
\begin{align*}
\sqrt{n} \left( n^{-1+pH}V(B^H,p;k)_n - m_p \right) \schw \mathcal N(0, v_p), 
\end{align*}
where $v_p= \sum_{l=2}^\infty l! \lambda_l^2 \Big(1+ 2 \sum_{j=1}^\infty \rho_k^l(j) \Big)$. 
\item[(ii)] When $k=1$ and $H=3/4$ we have
\begin{align*}
\frac{\sqrt{n}}{\log n} \left( n^{-1+pH}V(B^H,p;k)_n - m_p \right) \schw \mathcal N(0, \tilde{v}_p), 
\end{align*}
where $\tilde{v}_p = 4 \lambda_2 \lim_{n\rightarrow \infty} \frac{1}{\log n} \sum_{j=1}^{n-1} \frac{n-k}{n} \rho_1^2(j)$.
\item[(iii)] When $k=1$ and $H\in (3/4,1)$ it holds that 
\begin{align*}
n^{2-2H} \left( n^{-1+pH}V(B^H,p;k)_n - m_p \right) \schw Z, 
\end{align*}
where $Z$ is a Rosenblatt random variable.
\end{itemize}
\end{theo}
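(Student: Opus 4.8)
\emph{Overall plan.} Theorem~\ref{mb-th} is the classical Breuer--Major / Dobrushin--Major--Taqqu limit theory for Hermite-type functionals of the fractional Gaussian noise, and the route I would take has four steps: reduce the statistic to a problem about a stationary Gaussian sequence, perform the Wiener chaos decomposition, read off from an elementary variance computation which chaos controls the limit and with which normalisation, and finally identify the limiting law (Gaussian in parts (i)--(ii), Rosenblatt in part (iii)).

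\emph{Step 1: reduction.} By self-similarity of $B^H$ and stationarity of its increments, for each fixed $n$ the array $(n^H\Delta_{i,k}^{n} B^H)_{i=k}^n$ has the same law as $(Y_i)_{i=k}^n$, where $Y_i:=\Delta_{i,k}B^H$ is a \emph{stationary} centred Gaussian sequence; dividing by the constant $\sqrt{\mathrm{Var}(Y_k)}$ and absorbing it into $m_p$ we may assume $\E[Y_i^2]=1$ and $\mathrm{corr}(Y_i,Y_{i+j})=\rho_k(j)$. With $h(x):=|x|^p-m_p$ one has $n^{-1+pH}V(B^H,p;k)_n-m_p=\tfrac1n\sum_{i=k}^n h(Y_i)+O(1/n)$, so everything reduces to the asymptotics of $S_n:=\sum_{i=k}^n h(Y_i)$. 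Since $h\in L^2(\gamma)$ is even with $\E[h(Y_1)]=0$, its Hermite expansion $h=\sum_{l\ge2}\lambda_l H_l$ has rank $2$; combined with $\E[H_l(Y_i)H_{l'}(Y_j)]=\1_{\{l=l'\}}\,l!\,\rho_k(i-j)^l$ this yields both the chaos decomposition $S_n=\sum_{l\ge2}\lambda_l\sum_{i=k}^n H_l(Y_i)$ and the exact variance $\mathrm{Var}(S_n)=\sum_{l\ge2}l!\,\lambda_l^2\,a_n(l)$ with $a_n(l):=\sum_{i,j=k}^n\rho_k(i-j)^l$.

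\emph{Step 2: variance bookkeeping.} Using $|\rho_k(j)|\le Kj^{2H-2k}$ (and, for the exact constants, the matching asymptotics $\rho_k(j)\sim c_k j^{2H-2k}$) one evaluates the growth of $a_n(l)$. If $k\ge2$, or $k=1$ with $H<3/4$, then $\sum_{m\in\Z}|\rho_k(m)|^2<\infty$, hence $a_n(l)=n\big(\sum_{m\in\Z}\rho_k(m)^l\big)(1+o(1))$ for every $l\ge2$ and $\mathrm{Var}(S_n)\asymp n$. If $k=1$ and $H=3/4$, then $\rho_1(m)^2\sim c|m|^{-1}$, so $a_n(2)\asymp n\log n$ while $a_n(l)=O(n)$ for $l\ge3$, and $\mathrm{Var}(S_n)\asymp n\log n$. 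If $k=1$ and $H\in(3/4,1)$, then $a_n(2)\asymp n^{4H-2}$ while $a_n(l)=o(n^{4H-2})$ for $l\ge3$, and $\mathrm{Var}(S_n)\asymp n^{4H-2}$. This fixes the norming sequences $\sqrt n$, $\sqrt{n\log n}$ and $n^{2H-1}$ for $S_n$ (equivalently the prefactors $\sqrt n$, $\sqrt n/\sqrt{\log n}$ and $n^{1-2H}$ applied to $n^{-1+pH}V-m_p$), and shows that the limit is governed by \emph{all} chaoses jointly in the first regime, but by the \emph{second chaos alone} in the other two.

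\emph{Step 3: limit identification, and the main obstacle.} In the first regime part (i) is the Breuer--Major CLT applied to $h$ (rank $2$, $\sum_j|\rho_k(j)|^2<\infty$): $\tfrac1{\sqrt n}S_n\schw\mathcal N(0,v_p)$ with $v_p=\lim_n\mathrm{Var}(S_n)/n$, which unfolds to the stated $\sum_{l\ge2}l!\lambda_l^2\big(1+2\sum_{j\ge1}\rho_k(j)^l\big)$. In the critical regime the $l\ge3$ chaoses become negligible after the $\sqrt{n\log n}$-normalisation, so part (ii) reduces to a CLT for the second-chaos term alone; in the long-memory regime the second-chaos term does \emph{not} become Gaussian, and part (iii) follows instead from the Dobrushin--Major/Taqqu non-central limit theorem, which produces the Rosenblatt random variable --- in both cases the higher chaoses are controlled by the variance bounds of Step~2. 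I would establish the single-chaos statements either by the classical method of moments (diagram formula for products of Hermite polynomials of a Gaussian family, showing that every joint cumulant of order $\ge3$ vanishes in the limit while the Hermite tail is controlled uniformly in $n$ through $\mathrm{Var}\big(\sum_{l>N}\lambda_l\sum_i H_l(Y_i)\big)\le n\sum_{l>N}l!\lambda_l^2\sum_{m\in\Z}\rho_k(m)^2$), or by the Malliavin--Stein fourth-moment theorem applied chaos by chaos (contractions $\|f_n\otimes_r f_n\|\to0$ in the Gaussian cases; $L^2$-convergence of the second-chaos kernel to the Rosenblatt kernel in case (iii)). This single-chaos step --- the diagram/contraction estimates, and in the critical case tracking the exact $\log n$ asymptotics --- is the genuine obstacle; since it is precisely the content of \cite{BM83,T79} I would quote those theorems rather than reproduce the combinatorics.
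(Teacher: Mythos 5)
The paper does not actually prove Theorem~\ref{mb-th}: it is recalled as a classical result and attributed to \cite{BM83,T79}, which is precisely where your outline also defers for the hard single-chaos step, so your proposal (reduction by self-similarity, Hermite expansion of rank $2$, variance bookkeeping over the three regimes, Breuer--Major CLT for (i)--(ii) and the Dobrushin--Major/Taqqu non-central theorem for (iii)) is correct and coincides with the standard argument behind the cited statement. The only discrepancy is on the paper's side rather than yours: in the critical case $k=1$, $H=3/4$ the correct norming is $\sqrt{n/\log n}$, as in your Step~2, while the displayed factor $\sqrt{n}/\log n$ (and arguably the constant $4\lambda_2$ in place of $4\lambda_2^2$) in the theorem as stated appears to be a typo.
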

The results of Theorem \ref{mb-th} has been extended to the case of general stationary increments Gaussian processes in \cite{gl89}. Asymptotic 
theory for power variation of stochastic integrals with respect to Gaussian processes has been intensively studied in \cite{BNCP09,BNCPW09,cnw06}. 
We also refer to the interesting work  \cite{Rosenblatt-1} for a study of quadratic variation of the Rosenblatt process.

Summarizing the asymptotic theory in the Gaussian case, we can conclude that the limiting behaviour in the framework of stationary increments L\'evy driven moving averages is quite different. Not surprisingly, 
the quite stunning results of Theorem \ref{maintheorem}(i) and Theorem \ref{sec-order}(i) do not appear in the Gaussian setting
(the convergence of the type \eqref{part3} may very well appear for differentiable Gaussian processes).

%%%%%%%%%%%%%%%%
\subsection{Limit theorems for discrete moving averages} \label{sec2.3}
%%%%%%%%%%%%%%%%
Asymptotic theory for statistics of discrete moving averages has been a subject of a deep investigation during the last thirty years. Indeed,
the variety of different limiting distributions, which may appear under certain conditions on the innovations and weight coefficients, is quite astonishing.
In a functional framework
they include Brownian motion, $m$th order Hermite  processes, stable L\'evy processes with various stability indexes and fractional Brownian motion.
We refer to the work \cite{at87,hh97,h99,ks01,s02,s04} among many others. The limit theory is much more diverse and still not completely understood
in contrast to the Gaussian case discussed in the previous subsection. For this reason, we will rather concentrate on some asymptotic results 
related to our set of conditions.

Let us consider a discrete moving average $(Z_i)_{i\in \mathbb Z}$ of the form
\begin{align*}
Z_i= \sum_{j=1}^{\infty} b_j \zeta_{i-j},
\end{align*} 
where $(\zeta_i)_{i\in \mathbb Z}$ is a sequence of i.i.d.\ random variables with mean $0$ and variance $1$, and $(b_j)_{j\geq 1}$ are non-random coefficients. The innovation $\zeta_1$ is assumed to satisfy:  There exists $K,\delta>0$ such that for all  $u\in \R$, $|\E[ e^{ i u \zeta_1}]|\leq K(1+|u|)^{-\delta}$.
For simplicity of exposition and comparison we assume that the distribution of $\zeta_1$ is symmetric. Now, we briefly review the results of \cite{s04}. 
The assumptions on the decay of the coefficients $(b_j)_{j\geq 1}$ and the tail behaviour of the innovations are as follows:  
\begin{align*}
b_j \sim k_0 j^{-\gamma} \quad \text{as } j\rightarrow \infty, \qquad \mathbb P(|\zeta_1| >x) \sim q x^{-\beta} \quad \text{as } x\rightarrow \infty,
\end{align*}   
for some $\gamma \in (1/2,1)$, $\beta \in (2,4)$ and $k_0,q\neq 0$. Surgailis \cite{s04} studies the asymptotic behaviour of the statistic
\begin{align*}
S_n = \sum_{i=1}^n h(Z_i),
\end{align*} 
where $h:\R\to\R$ is a \textit{bounded} measurable function with $\mathbb E[h(Z_1)]=0$. In this framework the most important ingredient is the \textit{Appell rank}
of the function $h$ (cf.\ \cite{at87}). It is defined as
\begin{align*}
k^{\star} := \min_{k\geq 1} \{h_\infty^{(k)} (0) \not= 0\} \qquad \text{with} \qquad h_\infty (x):= \mathbb E[h(Z_1+x)].  
\end{align*} 
%(cf. Theorem \ref{sec-order}(i)). 
The Appell rank is similar in spirit with the Hermite rank introduced in the previous section, but it is much harder
to prove limit theorems for the statistic $S_n$ for an arbitrary Appell rank $k^{\star}$. The main problem is that, in contrast to Hermite expansion, the
expansion with respect to Appell polynomials typically does not exist. Furthermore, the decomposition of $S_n$ becomes more complex when $k^{\star}$
increases. For this reason only the cases $k^{\star}=1,2,3$ are usually treated in the literature in the framework of heavy
tailed innovations. In particular, \cite{s04} investigates the cases
$k^{\star}=2,3$.

At this stage we compare the introduced setting of discrete moving average  with our framework of \eqref{def-of-X-43}. For the sake 
of exposition, we will rather consider the tangent process $\widetilde{X}$ defined at \eqref{flm1} driven by a symmetric $\beta$-stable
L\'evy motion $L$. We immediately see that our assumption on $\beta$, namely $\beta \in (0,2)$, does not satisfy the tail behaviour condition 
on the innovations introduced above (in particular, $\E[L_t^2]=\infty$). As for the weight coefficients, our kernel function satisfies that 
\[
c_0 ((x+1)^{\alpha} - x^\alpha) \sim \alpha c_0 x^{\alpha -1} \quad \text{as } x\rightarrow \infty. 
\]
Thus, the connection to the setting of discrete moving averages is given via $k_0=\alpha c_0$ and $\gamma = 1-\alpha$ (indeed, it holds 
that $1-\alpha\in (1/2,1)$ under conditions of Theorem \ref{maintheorem}(ii)). In our framework, the function $h$ is given via $h(x)=|x|^p - m_p$,
where the quantity $m_p$ has been defined in Theorem \ref{maintheorem}(ii), which is obviously not bounded. Since $h$ is an even function and $L$ is symmetric,
we readily deduce that $k^{\star}=2$. 

Now, we summarize the asymptotic theory from \cite{s04} for the statistic $S_n$ in the case of Appell rank $k^{\star}=2$ ($2<\beta <8/3$):
\begin{itemize}
\item[(i)] $1/2<\gamma <(\beta + \sqrt{\beta ^2-2\beta})/2\beta$: convergence rate $n^{2-2\gamma}$, Rosenblatt limit.
\item[(ii)] $(\beta + \sqrt{\beta ^2-2\beta})/2\beta<\gamma <2/\beta $: convergence rate $n^{1/\gamma \beta}$,  $\gamma \beta$-stable limit.
\item[(iii)] $2/\beta < \gamma < 1$: convergence rate $n^{1/2}$, normal limit.
\end{itemize}
Although the results of \cite{s04} are not directly applicable (recall that in our setting $\beta \in (0,2)$ and $h$ is unbounded), Theorem 
\ref{sec-order}(i) corresponds to case (ii) of \cite{s04}. Indeed, we apply a similar proof strategy to show the weak convergence. However, 
strong modifications due to unboundedness of $h$, triangular nature of summands in \eqref{vn}, stochastic integrals instead of sums,  and the different set of conditions are required. 

\begin{rem} \label{rem1} \rm
Case (ii) of \cite{s04} is a quite remarkable result, since a  $\gamma \beta$-stable distribution appears in the limit 
although the summands of $S_n$ are bounded (in particular, all moments of $S_n$ exist). In particular, the rate of convergence 
$n^{1/\gamma \beta}$ does not correspond to the variance of $S_n$. \qed 
\end{rem}

\begin{rem} \label{rem2} \rm
The symmetry condition on the L\'evy process $L$ is assumed for sake of assumption simplification. Most asymptotic results of this paper
would not change if we dropped this condition. However, the Appell rank of the function $h(x)=|x|^p - m_p$ might be $1$ when $L$ is not symmetric
and this does change the result of Theorem \ref{sec-order}(i). More specifically, the limiting distribution turns out to be $\beta$-stable 
(see e.g.\  \cite{ks01} for the discrete case). We dispense with the exact exposition of this case. \qed  
\end{rem}

%%%%%%%%%%%%%%%%%
\subsection{Further remarks and possible extensions} \label{sec2.4}
We start by commenting on the set of conditions introduced in assumption (A). First of all, 
it follows by \cite[Theorem~7]{RajRos} that the process $X$, defined in \eqref{def-of-X-43},  is well-defined if and only if for all $t\geq 0$, 
\begin{equation}\label{sdkfhsd;kf}
\int_{-t}^\infty \int_\R \Big(\big| f_t(s)x\big|^2\wedge 1\Big)\,\nu(dx)\,ds<\infty,
\end{equation}
where $f_t(s)=g(t+s)-g_0(s)$.
By adding  and subtracting  $g$ to $f_t$ it follows by assumption (A) and the  mean value theorem  that  $f_t\in L^\theta(\R_+)$ and $f_t$ is  bounded. 
%\begin{equation}\label{eq:934}
%| f_t(s)|\leq \begin{cases}  \phi(s) & s> \delta \\ K & s\in [-t,\delta],
%\end{cases}
%\end{equation}
%for some bounded function $\phi\in L^\theta((\delta,\infty))$.
For all $\epsilon>0$, assumption (A) implies that 
\begin{equation}\label{eq:74}
\int_{\R} (|y x|^2\wedge 1) \,\nu(dx)\leq K\Big( \1_{\{|y|\leq 1\}} |y|^\theta +\1_{\{|y|>1\}}|y|^{\beta+\epsilon}\Big), 
\end{equation}
which shows  \eqref{sdkfhsd;kf} since $f_t\in L^\theta(\R_+)$  is bounded.  %However, since the function $g_0$ disappears when
%considering the $k$th order differences of $X$, the condition $g(t+\cdot) - g_0(\cdot) \in L^{\theta}((0,\infty))$ is not really necessary for 
%finiteness of the latter. 
We  remark that for $\theta'<\theta$ it holds
\[
\limsup_{t\to \infty} \nu(x\!:|x|\geq t) t^{\theta}<\infty \quad \Longrightarrow \quad \limsup_{t\to \infty} \nu(x\!:|x|\geq t) t^{\theta'}<\infty.
\]
On the other hand, the assumption $g^{(k)}\in L^{\theta '} ((\delta ,\infty ))$ is stronger than $g^{(k)}\in L^{\theta } ((\delta ,\infty ))$, which creates
a certain balance between these two conditions. Finally, we note that the assumption (A-$\log$) will be used only for the case $\theta =1$ (resp.\ $\theta=p$) in part (i) (resp.\ part (iii)) of 
Theorem \ref{maintheorem}.

More importantly, the conditions of assumption (A) guarantee that the quantity
\[
\int_{-\infty}^{t-\epsilon} g^{(k)}(t-s) \,dL_s, \qquad \epsilon >0,
\]               
is well-defined (typically, the above integral is not well-defined for $\epsilon =0$). The latter is crucial for the proof of Theorem \ref{maintheorem}(i).
We recall that the condition $p\geq 1$ is imposed in Theorem \ref{maintheorem}(iii). 
We think that this condition might not be necessary, but the results
of \cite{SamBra} applied in our proofs require $p\geq 1$. However, when the index $\alpha$ further
satisfies $\alpha>k$, then the stochastic process $F$ at \eqref{part3} is continuous and condition
$p\geq 1$ is not needed in Theorem \ref{maintheorem}(iii).

The conditions $\alpha \in (0,k-1/p)$ and $p>\beta$ of Theorem \ref{maintheorem}(i) seem to be sharp.
Indeed, since $|h_k(x)|\leq Kx^{\alpha -k}$ for large $x$, 
we obtain from \eqref{part1} that 
\[
\sup_{m\geq 1} V_m <\infty  
\]
when $\alpha \in (0,k-1/p)$. On the other hand $\sum_{m:T_m\in [0,1]} |\Delta L_{T_m}|^p<\infty$ for $p>\beta$, which follows from the definition
of the Blumenthal--Getoor index at \eqref{def-B-G}. Notice that under assumption $\alpha \in (0, k-1/2)$
the case $p=2$, which corresponds to quadratic variation, always falls under Theorem \ref{maintheorem}(i).  
We remark that the distribution of the limiting variable in \eqref{part1} does not depend
on the chosen sequence $(T_m)_{m\geq 1}$ of stopping times which exhausts the jump times of $L$. Furthermore, 
the limiting random variable $Z$ in \eqref{part1} is infinitely divisible with L\'evy 
measure $(\nu\otimes \eta)\circ \big((y,v)\mapsto  |c_0 y|^p v\big)^{-1}$, where $\eta$ denotes the law of $V_1$. In fact, $Z$ has characteristic function
given by 
\begin{equation}
\E[\exp(i \theta Z)]=\exp\Big( \int_{\R_0\times \R} (e^{i \theta |c_0 y|^p v}-1)  \,\nu(dy)\,\eta(dv)   \Big).
\end{equation}
To show this, let  $\Lambda$ be the Poisson random measure given by 
$\Lambda=\sum_{m=1}^\infty \delta_{(T_m,\Delta L_{T_m})}$ on $[0,1]\times \R_0$  which has intensity measure $\lambda\otimes \nu$. Here  $\R_0:=\R\setminus \{0\}$ and  $\lambda$ denotes the Lebesgue measure on $[0,1]$. 
Set  $\Theta=\sum_{m=1}^\infty \delta_{(T_m, \Delta L_{T_m},V_m)}$. Then $\Theta$ is a Poisson random measure with intensity  measure $\lambda\otimes \nu \otimes \eta$, due to  \cite[Theorem~36]{Serfozo}, and hence the  above claims  follows from the stochastic integral representation
\begin{equation}
Z= \int_{[0,1]\times \R_0\times \R} \big(|c_0 y|^p v\big) \, \Theta(ds,dy,dv).
\end{equation} 
As for Theorem \ref{maintheorem}(iii), we remark that for values of $\alpha$ close to $k-1/p$ or $k-1/\beta $, 
the function $g^{(k)}$ explodes at $0$. This leads to unboundedness of the process $F$ defined in 
Theorem \ref{maintheorem}(iii). Nevertheless,  the limiting random variable in \eqref{part3}
is still finite. 

We recall that $L$ is assumed to be a symmetric $\beta$-stable L\'evy process   in Theorems \ref{maintheorem}(ii) and \ref{sec-order}. This assumption
can be relaxed following the discussion of tangent processes in Section \ref{sec2.1}. Indeed, only the small scale behaviour of the driving L\'evy process $L$
should matter for the statement of the aforementioned results. When the small jumps of $L$ are in the domain of attraction of a symmetric $\beta$-stable L\'evy
process, e.g.\   its L\'evy measure satisfies the decomposition
\[
\nu (dx) = \left(\text{const} \cdot |x|^{-1-\beta} + \varphi(x) \right) dx
\]         
with $\varphi(x)=o(|x|^{-1-\beta})$ for $x\rightarrow 0$, the statements of Theorems \ref{maintheorem}(ii) and \ref{sec-order} should remain valid under
possibly further assumptions on the function $\varphi$. Such processes include for instance  tempered or truncated symmetric $\beta$-stable L\'evy processes. % discussed in \cite{BCI}. 

\begin{rem} \label{rem3} \rm
Theorem~5.1 of \cite{BCI} studies the first order asymptotic of the power variation  of some fractional fields $(X_t)_{t\in \R^d}$. In the  case  $d=1$, they considers  fractional L\'evy processes $(X_t)_{t\in \R}$ of the form 
\begin{equation}\label{eq:7237}
 X_t= \int_{\R} \big\{|t-s|^{H-1/2} - |s|^{H-1/2}\big\}\,dL_s
\end{equation} where $L$ is a truncated $\beta$-stable L\'evy process. This  setting is close to fit into the framework of the present paper \eqref{def-of-X-43} with $\alpha= H-1/2$ except for the fact that  the stochastic integral \eqref{eq:7237} is over the hole real line. However, the proof of Theorem~\ref{maintheorem}(i) still holds for $X$ in \eqref{eq:7237} with  obvious modifications of $h_k$ and $V_m$ in \eqref{def-h-13} and \eqref{part1}, respectively.  
Notice also that   \cite{BCI} considers  the power variation along the subsequence $2^n$, which corresponds to dyadic partitions, and their setting includes second order increments.  For $p<\beta$,  Theorem~5.1 of \cite{BCI} claims that 
$2^{n \alpha p} V(p;2)_{2^n}\to C$ a.s.\ where $C$ is a positive constant, which in the notation of \cite{BCI}   corresponds to the case $\alpha<\beta<2$. However, this contradicts Theorem~\ref{maintheorem}(i) together with the remark following it, namely that,   convergence in probability can not take place under the conditions of  Theorem~\ref{maintheorem}(i) not even trough a subsequence. The main part of the proof of the cited result, \cite[Theorem~5.1]{BCI}, consists in proving  that 
$\E[  2^{n\alpha p} V(p;2)_{2^n}] \to C$ (see p.~372, l.~11), and this result agrees with our Theorem~\ref{maintheorem}(i).  However, in the last three lines of their proof (p.\ 372, l.\ 12--15) it is argued how $\E[  2^{n\alpha p} V(p;2)_{2^n}] \to C$ implies that $2^{n\alpha p} V(p;2)_{2^n}\to C$ a.s.\ The argument relies on the statement  from \cite{BCI}: ``$S_n(y)= \E[ S_n(y) ] (1+o_{(a.s.)}(1))$'', where the stochastic process $S_n(y)$ is the empirical characteristic function of normalised increments
defined on \cite[p.~367]{BCI}. This statement   is only shown for each fixed $y$ (see \cite[Lemma~5.2]{BCI}), but to use it to carry out the proof, it is crucial to know the dependence of 
the $o_{(a.s.)}(1)$-term in $y$. In  fact,  it is not even enough to have  boundedness in $y$ of the  $o_{(a.s.)}(1)$-term. 
%We need a  sufficiently sharp  upper for $| S_n(y)- \E[ S_n(y)] |$ in all $n$ and $y$, is needed. 
% is in contradiction with our Theorem~\ref{maintheorem}(i).
%$V(p;1)_n$ in the setting of a truncated symmetric $\beta$-stable L\'evy process.
%The authors show the ergodic case of Theorems \ref{maintheorem}(ii), but they wrongly identify Theorems \ref{maintheorem}(i) and (ii) as the same result.
%They seem to have an error in the proof of Theorem 5.1, which comes from a wrong application of dominated convergence theorem in \cite[Eq. (27)]{BCI}. 
%This is also the reason behind the incorrect identification of Theorems \ref{maintheorem}(i) and (ii). 
\qed
\end{rem}

Let us further explain the various conditions of Theorems \ref{maintheorem}(ii) and \ref{sec-order}. The condition $p<\beta$ obviously ensures the existence
of moments $m_p$, while assumption $p<\beta/2$ ensures the existence of variance of the statistic $V(p;k)_n$.
The validity range of the central limit theorem ($\alpha \in (0,k-2/\beta)$) in \eqref{part5} is smaller than the validity range of the law of large numbers
in Theorem \ref{maintheorem}(ii) ($\alpha \in (0,k-1/\beta)$). It is not clear which limit distribution appears in case of $\alpha \in (k-2/\beta, k-1/\beta)$.
There are also two critical cases that correspond to $\alpha = k - 1/p$ in Theorem \ref{maintheorem}(i) and $\alpha = k - 1/ \beta$ in Theorem 
\ref{maintheorem}(ii).
We think that additional logarithmic rates will appear in these cases, but the precise proofs are a subject of future research.

\begin{rem}\label{rem-const}
\begin{enumerate}
\item [(i)] To define the constant $\tilde \sigma$  appearing in Theorem~\ref{sec-order}(i)
we set 
\begin{align}
\kappa = {}&\frac{\alpha^{1/(1-\alpha)} }{1-\alpha} 
\int_0^{\infty} \Phi (y) y^{-1- 1/(1-\alpha )} \,dy,
\end{align}
where   $\Phi(y):= \E[|\widetilde{X}_1+y|^p - |\widetilde{X}_1|^p]$, $y\in\R$, and   $\widetilde{X}_t$ is a linear fractional stable motion  defined in \eqref{flm1} with $c_0=1$ and $L$ being a standard symmetric $\beta$-stable L\'evy process.
In addition, set 
\begin{equation}\label{def-tau-rho}
 \tau_{\rho}=  \frac{\rho-1}{\Gamma(2-\rho)|\cos(\pi \rho/2)|},\qquad \text{for all }\rho\in (1,2),
\end{equation}
where $\Gamma$ denotes the gamma function. Then, 
\begin{equation}
\tilde \sigma = |c_0|^p \sigma^p  \Big(\frac{\tau_\beta}{\tau_{(1-\alpha)\beta}}\Big)^{\frac{1}{(1-\alpha)\beta}}\kappa .
\end{equation}
%The constant $\tilde \sigma$  appearing in Theorem~\ref{sec-order}(i) is defined by   
%\begin{equation}
% \widetilde{\sigma} = \sigma^p |c_0|^p (\gamma/\tau_{(1-\alpha)\beta})^{1/(1-\alpha)\beta},\qquad 
% \text{where}\qquad \tau_{x}= \frac{1-x}{2\Gamma(2-x)\cos(\pi x/2)},
%\end{equation}	
%  $\Gamma$ is  the gamma function,  $\gamma$ is defined via
%\begin{align} \label{gammadef}
%\gamma &: = \tau_{\beta}  \kappa^{(1-\alpha)\beta}  \\[1.5 ex] 
%\kappa &:= \frac{\alpha^{1/(1-\alpha)} (1+\beta(\alpha -1))^{1/(1-\alpha)\beta }  }{1-\alpha} 
%\int_0^{\infty} H_{\infty} (y) y^{-1- 1/(1-\alpha )} \,dy, 
%\end{align} 
%and  
The function $\Phi(y)$  
can be computed explicitly, see \eqref{rep-H-est-2}.  
This representation shows, in particular, that $\Phi (y)>0$ for all $y>0$, and hence the limiting variable 
$S$
in Theorem \ref{sec-order}(i) is not degenerate, because $\tilde \sigma >0$.
\item [(ii)] The constant $\eta^2$ in Theorem~\ref{sec-order}(ii)  is given by 
\begin{equation}\label{def-eta-23}
\eta^2 =  |c_0 \sigma |^{2p}\Big(\theta(0) + 2 \sum_{i=1}^\infty \theta(i)\Big)  , \quad
\theta(i) = a_p^{-2} \int_{\R^2} \frac{1}{|s_1s_2|^{1+p}} \psi_i(s_1,s_2)\, ds_1 \,ds_2,
\end{equation}
\begin{align}
%\qquad \\[1.5 ex]  
\label{theta} \psi_i(s_1,s_2) = {}&  \exp \left(-  \int_{\R} |s_1 h_k(x) -s_2h_k(x+i)|^{\beta } \,dx \right)  
  \\[1.5 ex]
{}&- \exp \left(-  \int_{\R} |s_1 h_k(x)|^{\beta} + |s_2h_k(x+i)|^{\beta} \,dx \right), \nonumber
\end{align}
where the function $h_k$ is defined at \eqref{def-h-13} and $a_p:= \int_{\R} (1-\exp(iu)) |u|^{-1-p} \,du$.
\end{enumerate}
\qed
\end{rem}

\begin{rem} \label{rem4} \rm
Let us explain the somewhat complex form of the variance $\eta^2$ in  \eqref{def-eta-23}. A major problem of proving Theorems \ref{sec-order}(ii) 
is that the covariance structure of stationary summands of $V(p;k)_n$ can not be computed  directly. 
However, the identity
\begin{align} \label{xp}
|x|^p = a_p^{-1} \int_{\R} (1-\exp(iux)) |u|^{-1-p} du \qquad \text{for } p\in (0,1),
\end{align}
which can be shown by substitution $y=ux$ ($a_p$ is defined in Remark~\ref{rem-const}(ii)), turns out to be a useful instrument. 
Indeed, for any deterministic function $\varphi :\R\rightarrow \R$
satisfying $\varphi\in L^{\beta}(\R)$, it holds that
\begin{align} \label{charfun}
\E\left[ \exp \left(iu \int_{\R} \varphi(s) \,dL_s \right)\right] = \exp \left(-\sigma^{\beta}|u|^{\beta} \int_{\R} |\varphi(s)|^{\beta} \,ds \right),
\end{align}
where $\sigma>0$ is the scale parameter of $L$. These two identities are used to compute the variance of the statistic $V(p;k)_n$ and they 
are both reflected in the formula for the quantity $\theta(i)$ in  \eqref{def-eta-23}. \qed
\end{rem}

\begin{rem} \label{rem6} \rm
A key to the proof of Theorem \ref{sec-order}(ii) is a quite technical Lemma \ref{helplem2}, 
which gives an upper bound for the covariances  of the summands of $V(p;k)_n$. We do believe that the obtained decay rates, which are explicitly derived in 
the proofs of Lemma \ref{helpLemma} and \ref{helplem2},  are essentially sharp (possibly up to a log rate) and they might be of independent interest. 
Our estimation method is based upon the identities \eqref{xp} and \eqref{charfun}. From this perspective
it differs from the typical proofs of asymptotic normality in the framework of discrete moving average  
(cf.\ \cite{hh97,h99,s04}). We also remark that their conditions, translated to continuous time L\'evy moving 
averages,
are not satisfied in our setting. \qed   
\end{rem}
The asymptotic theory of this paper has a variety of potential applications and extensions. Let us first remark that Theorem 
\ref{maintheorem} uniquely identifies the parameters $\alpha$ and $\beta$. Notice that the convergence
rates  of Theorem~\ref{maintheorem}(i)--(iii) are all different under the corresponding conditions. Indeed, it holds that
\[
p(\alpha +1/\beta)-1<\alpha p< pk -1,
\]
since in case (i) we have $\alpha <k-1/p$ and in case (ii) we have $p<\beta$. Hence, computing the statistic $V(p;k)_n$ at log scale
for a continuous range of powers $p$  identifies the parameters $\alpha$ and $\beta$. More specifically, Theorem \ref{maintheorem}(ii) can be applied
directly to estimate the parameter $\alpha +1/\beta$. Indeed, we immediately obtain the convergence in probability
\[
\frac{\sum_{i=1}^{[n/2]} |X_{\frac {2i}n} - X_{\frac {2(i-1)}n}|^p}{\sum_{i=1}^n |X_{\frac in} - X_{\frac {i-1}n}|^p} \toop 2^{p(\alpha +1/\beta)-1}
\]
under conditions of Theorem \ref{maintheorem}(ii). Thus, a consistent estimator of
 $\alpha$ (resp. $\beta$) can be constructed given the knowledge
of $\beta$ (resp. $\alpha$) and the validity of conditions $\alpha \in (0,1-1/\beta)$ and $p<\beta$ 
(similar techniques remain valid for an arbitrary $k\geq 1$). We refer to a recent work \cite{glt15}, 
which applies
 log statistics of linear fractional stable motion to estimate parameters $\alpha$ and $\beta$. 

The results of Theorems \ref{maintheorem} and \ref{sec-order} can be extended in various directions. One may prove functional convergence for a 
partial sums formulation of the statistic $V(p;k)_n$ (the reformulation of asymptotic results is then rather obvious). For instance, we immediately
obtain uniform convergence in probability in Theorem \ref{maintheorem}(ii) and (iii), because the limits are continuous in time and the statistic is increasing
in time. It is likely to deduce the weak convergence towards a Brownian motion, resp.\ symmetric $(1-\alpha)\beta$-stable L\'evy process, in Theorem
\ref{sec-order}(ii), resp.\ (i) (cf.\ e.g.\  \cite{s04} for functional limit theorems in the discrete case). 
The case of Theorem \ref{maintheorem}(i) might be more complicated to handle.     

In another direction the limit theory may well be extended to integrals with respect to stationary increments L\'evy moving averages (see e.g.\  \cite{sg14} for some related results) or to the 
so called ambit processes, which include an additional multiplicative random input in the definition of the model \eqref{def-of-X-43} (see \cite{bs07}
for the definition, properties and applications of ambit processes). In this context the Bernstein's blocking technique is usually used
to extend Theorems \ref{maintheorem} and \ref{sec-order} to a more complex setting.

%%%%%%%%%%%%%
\section{Preliminaries} \label{secPrel}
%%%%%%%%%%%%%%%%¤¤¤¤¤¤¤%%

Throughout the following sections all positive constants will be denoted by $K$, although they may change from line to line. 
Also the notation might change from subsection to subsection, but the meaning will be clear from the context. Throughout all the next sections we assume, without loss of generality,  that $c_0=\delta=\sigma= 1$. Recall that  $g(t)=g_0(t)=0$ for all $t<0$ by assumption.

For a sequences of random variables  $(Y_n)_{n\in \N}$ defined on the probability space $(\Omega,\mathcal F,\P)$ 
we write $Y_n\stab Y$ if $Y_n$ converges $\f$-stably in law to $Y$, i.e.\ $Y$ is a random variable defined on an extension  of $(\Omega,\mathcal F,\P)$ such that 
for all $\f$-measurable random variables $U$ we have the joint convergence in law  $(Y_n,U)  \schw (Y,U)$. 
In particular, $Y_n\stab Y$ implies $Y_n\schw Y$.  For $A\in \f$ we will say that $Y_n\stab Y$ on $A$, if $Y_n\stab Y$ under $\P_{|A}$, where  $\P_{|A}$ denotes the conditionally probability measure $B\mapsto \P(B \cap A)/\P(A)$, when $\P(A)>0$.  We refer to the work \cite{Aldous,ren} for a detailed exposition of stable convergence. In addition,  $\toop$ will denote convergence in probability. We will  write $V(Y,p;k)_n=\sum_{i=k}^n |\Delta^n_{i,k} Y|^p$ when we want to stress that the power variation is built from a process $Y$. 
On the other hand, when  $k$ and $p$ are fixed we will sometimes write $V(Y)_n=V(Y,p;k)_n$ to simplify the notation. 

For all $n,i\in \N$ set
\begin{align}\label{def-g-i-n}
g_{i,n}(x) ={}&  \sum_{j=0}^k (-1)^j \binom{k}{j} g\big((i-j)/n-x\big),\\ 
\label{def-h-i-n}
h_{i,n}(x) = {}& \sum_{j=0}^k (-1)^j \binom{k}{j} \big((i-j)/n-x\big)_+^\alpha,\\
\label{def-g-n}
g_n(x)={}& n^{\alpha}g(x/n), \qquad x\in \R.
\end{align}
In addition,  for each function $\phi\!:\R\to \R$ define  $D^k \phi\!:\R\to\R$  by 
\begin{align} \label{dkdef}
D^k \phi(x)=\sum_{j=0}^k (-1)^j \binom{k}{j} \phi(x-j),\qquad x\in \R.
\end{align}
In this notation the function $h_k$, defined in \eqref{def-h-13}, is given by $h_k=D^k \phi$ with $\phi: x\mapsto x_+^\alpha$.

%
%For $n,i\in \N$ and a function 
%$q:\R\to \R$ we define  
%\begin{equation} \label{def-g-i-n}
%q_{i,n}(x):= \sum_{j=0}^k (-1)^j \binom{k}{j} q\big((i-j)/n-x\big), \qquad q_n(x):=n^{\alpha}
%q(x/n), \qquad x\in \R.
%\end{equation}
%
%
%In addition let 
%\begin{equation}
%h(x)= x_+^\alpha,\qquad x\in \R. 
%\end{equation}
%In particular, for $q=g$ or $h_k$. 
%For simplicity we will write $h_{i,n}= (h_k)_{i,n}$ and $h_n=(h_k)_n$ suppressing the dependence of functions on $k$. 
%Recall the definition of the function $g_{i,n}$ and $h_{i,n}$ in  \eqref{def-g-i-n}.

\begin{lem} \label{helplem}
Assume that $g$ satisfies condition (A). Then we obtain the following estimates 
\begin{align}
\label{lemest1} |g_{i,n}(x)|&\leq K (i/n -x)^{\alpha}, \qquad x \in [(i-k)/n,i/n], \\
\label{lemest2} |g_{i,n}(x)|&\leq K n^{-k}((i-k)/n -x)^{\alpha-k} , \qquad x \in (i/n - 1, (i-k)/n), \\
\label{lemest3} |g_{i,n}(x)|&\leq K n^{-k}
\left( \1_{[(i-k)/n - 1, i/n - 1]}(x)  +
g^{(k)} ((i-k)/n -x) \1_{(-\infty, (i-k)/n - 1)} (x)  \right),\qquad  \\ 
& x \in (- \infty, i/n - 1].
\end{align}
The same estimates trivially hold for the function $h_{i,n}$. 
\end{lem}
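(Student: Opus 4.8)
The plan is to prove the three estimates by expressing $g_{i,n}$ via the $k$th order difference operator and then carefully tracking where the argument of $g$ lives relative to the singularity at $0$ and the threshold $\delta=1$ (recall $\delta=1$ by the normalization at the start of Section~\ref{secPrel}). Throughout, write $t_j = (i-j)/n$ for $j=0,\dots,k$, so that $g_{i,n}(x) = \sum_{j=0}^k(-1)^j\binom kj g(t_j - x) = D^k[g(\cdot/1)]$-type expression evaluated appropriately; more precisely $g_{i,n}(x) = (D^k \phi_x)(i/n)$ with $\phi_x(u) = g(u-x)$ after rescaling, but it is cleaner to just use the Taylor/finite-difference remainder. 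Since $g$ is $k$-times continuously differentiable on $(0,\infty)$, for any $x$ with $t_k - x > 0$ (i.e.\ $x < (i-k)/n$) the classical finite difference identity gives
\[
g_{i,n}(x) = n^{-k}\int_{[0,1]^k} g^{(k)}\Big(\tfrac{i}{n}-x - \tfrac{u_1+\cdots+u_k}{n}\Big)\,du_1\cdots du_k,
\]
so that $|g_{i,n}(x)| \le n^{-k}\sup_{u\in[t_k-x,\,t_0-x]} |g^{(k)}(u)|$. This single representation will drive estimates \eqref{lemest2} and \eqref{lemest3}.

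For \eqref{lemest1}, where $x\in[t_k, t_0]$, the points $t_j-x$ straddle $0$, so the smoothness-based bound is unavailable; instead I would bound each term of the sum separately. Each summand $|g(t_j-x)|$ is nonzero only when $t_j - x > 0$, and then by assumption (A) the behaviour $g(s)\sim c_0 s^\alpha$ as $s\downarrow 0$ together with continuity of $g$ on $(0,1]$ (so $g$ is bounded there, in fact $|g(s)|\le K s^\alpha$ for $s\in(0,1]$ — this follows by combining $g(s)/s^\alpha\to c_0$ near $0$ with continuity away from $0$ on the compact $[\epsilon,1]$) yields $|g(t_j-x)| \le K (t_j-x)^\alpha \le K(t_0 - x)^\alpha = K(i/n - x)^\alpha$, using $0 < t_j - x \le t_0 - x \le k/n \le$ (something bounded), and absorbing the finitely many binomial constants into $K$. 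That the relevant arguments are $\le 1$ needs the implicit understanding $i/n - x \le k/n$, which holds on this range once $n\ge k$; for the finitely many small $n$ the bound is trivial by boundedness on compacts.

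For \eqref{lemest2}, where $x\in(i/n - 1, t_k)$: here $t_k - x > 0$ and $t_0 - x = i/n - x < 1 = \delta$, so the whole range of arguments $[t_k-x, t_0-x]$ lies in $(0,\delta)$, where assumption (A) provides $|g^{(k)}(u)|\le K u^{\alpha - k}$. Plugging into the integral representation and using monotonicity of $u\mapsto u^{\alpha-k}$ (as $\alpha - k < 0$) gives $|g_{i,n}(x)| \le n^{-k}(t_k - x)^{\alpha - k} = K n^{-k}((i-k)/n - x)^{\alpha-k}$, which is exactly \eqref{lemest2}. For \eqref{lemest3}, where $x \le i/n - 1$: now $t_k - x \ge (i/n - 1) - x + (t_k - i/n) = $ something $\ge 1 - k/n$; more to the point, if $x \in (t_k - 1, i/n - 1]$ then part of the argument-range sits in $(0,\delta)$ and part in $(\delta,\infty)$, but in all cases we can bound $|g^{(k)}|$ on $(0,\infty)$ by noting it is bounded on $(0,\delta)$ near $\delta$ and decreasing on $(\delta,\infty)$; this yields the indicator term $n^{-k}\1_{[(i-k)/n-1,\,i/n-1]}(x)$. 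If instead $x < t_k - 1 = (i-k)/n - 1$, then the entire range $[t_k-x, t_0-x] \subset (\delta,\infty)$ where $|g^{(k)}|$ is decreasing, so $\sup$ over the range equals $|g^{(k)}(t_k - x)| = g^{(k)}((i-k)/n - x)$ (writing $g^{(k)}$ for its absolute value on that region, or inserting $|\cdot|$), giving the second term. I expect the main obstacle to be the bookkeeping in \eqref{lemest3}: one must split the half-line $x\le i/n-1$ at the point $x = (i-k)/n - 1$, and on the middle strip of width $k/n$ carefully combine boundedness of $g^{(k)}$ near $\delta=1$ with its monotone decay, so that the bound is uniform in $i,n$; the decreasing assumption on $|g^{(k)}|$ past $\delta$ is precisely what makes the $\sup$ over a shifting interval collapse to an evaluation at the left endpoint. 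Finally, the identical estimates for $h_{i,n}$ follow because $\phi(x) = x_+^\alpha$ satisfies (A) with $g_0 = 0$, $c_0 = 1$, any $\theta$, and $\phi^{(k)}(t) = \alpha(\alpha-1)\cdots(\alpha-k+1)\,t^{\alpha-k}$ for $t>0$, so all the hypotheses used above hold verbatim (with $\delta$ arbitrary, decay automatic).
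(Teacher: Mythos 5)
Your proof is correct and follows essentially the same route as the paper's: \eqref{lemest1} by bounding each summand via $|g(s)|\leq Ks^\alpha$ near zero (condition \eqref{kshs}), and \eqref{lemest2}--\eqref{lemest3} via the $k$th order Taylor/finite-difference representation combined with $|g^{(k)}(t)|\leq Kt^{\alpha-k}$ on $(0,\delta)$ and the monotone decay of $|g^{(k)}|$ on $(\delta,\infty)$. The paper's own proof is only a three-sentence sketch of exactly this argument, so your version simply supplies the bookkeeping (the integral form of the finite-difference remainder and the case split at $x=(i-k)/n-1$) that the authors leave implicit.
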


\begin{proof}
The inequality \eqref{lemest1} follows directly from condition \eqref{kshs} of (A). The second inequality
\eqref{lemest2} is a straightforward consequence of Taylor expansion of order $k$ and the condition
$|g^{(k)}(t)|\leq K t^{\alpha-k}$ for $t\in (0,1)$. The third inequality \eqref{lemest3} follows again through Taylor expansion and the fact that the function $g^{(k)}$ is decreasing on $(1, \infty)$. 
\end{proof}

%%%%%%%%%%%%%
\section{Proof of Theorem \ref{maintheorem}}\label{proofs-w3lkhj}
%%%%%%%%%%%%%%
\setcounter{equation}{0}
\renewcommand{\theequation}{\thesection.\arabic{equation}}

In this section we will prove the assertions of Theorem \ref{maintheorem}.

%%%%%%%%%%%%%%%%%%
\subsection{Proof of Theorem~\ref{maintheorem}(i)} 
%%%%%%%%%%%%%%%%%%%%

The proof of Theorem~\ref{maintheorem}(i) is divided into the following three steps.
In Step~(i) we show Theorem~\ref{maintheorem}(i) for the compound Poisson case, which stands for the treatment of big jumps of $L$. 
Step (ii) consists of an approximating lemma, which proves that the small jumps of $L$ are asymptotically negligible.
Step (iii) combines the previous results to obtain the general theorem. 

Before proceeding with the proof  we will need the following preliminary lemma. Let $\{x\}:=x-\lfloor x\rfloor\in [0,1)$ denote the fractional
part of $x\in \R$. The lemma below seems to be essentially known (cf.\ \cite{Jacod-round-off,Tukey}), however, we have not been able 
to find this particular formulation. Therefore it is stated below for completeness. 
%For  the one dimensional case, see Tukey~\cite{Tukey}. 

\begin{lem}\label{frac-part-lem}
For $d\geq 1$ let $V=(V_1,\dots,V_d)$ be an absolutely continuous random vector in $\R^d$ with a density $v\!:\R^d\to \R_+$. Suppose that there exists an open convex set $A\subseteq \R^d$ such that $v$ is continuous differentiable on $A$ 
%(with a locally bounded gradient $\nabla v=(\partial v/\partial x_j)_{j=1}^k$?) 
and vanish outside $A$.  Then, as $n\to \infty$, 
\begin{equation}\label{sflj}
\big(\{n V_1\},\dots,\{n V_d\}\big)\stab U=\big(U_1,\dots,U_d\big)
\end{equation}
 where $U_1,\dots,U_d$ are independent   $\mathcal U([0,1])$-distributed random variables which are independent of $\f$. 
\end{lem}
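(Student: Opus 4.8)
\textbf{Proof plan for Lemma~\ref{frac-part-lem}.}
The plan is to reduce stable convergence to a statement about conditional characteristic functions and then exploit the Riemann--Lebesgue lemma. Recall that $(\{nV_1\},\dots,\{nV_d\})\stab U$ with $U$ independent of $\f$ is equivalent to showing that for every bounded $\f$-measurable random variable $W$ and every $t=(t_1,\dots,t_d)\in\R^d$,
\begin{equation}
\E\Big[W\exp\Big(i\sum_{j=1}^d t_j\{nV_j\}\Big)\Big]\longrightarrow \E[W]\cdot\prod_{j=1}^d\E\big[e^{it_jU_j}\big]=\E[W]\cdot\prod_{j=1}^d\int_0^1 e^{it_jx}\,dx.
\end{equation}
Since $V$ is absolutely continuous with density $v$ vanishing outside $A$, the pair $(V,W)$ has a regular conditional structure; by conditioning on $\f$ and using a monotone class argument it suffices to treat $W=\1_B$ for $B$ in a generating algebra, and ultimately it suffices to prove the convergence of $\E[h(V)\,e^{i\sum_j t_j\{nV_j\}}]$ for every bounded continuous (or just integrable) $h\colon\R^d\to\R$. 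Writing this expectation as an integral against $v$, and absorbing $h$ into the density, the whole statement boils down to the purely analytic claim: for every $\phi\in L^1(\R^d)$,
\begin{equation}\label{eq:riemann-target}
\int_{\R^d}\phi(x)\,\exp\Big(i\sum_{j=1}^d t_j\{nx_j\}\Big)\,dx\longrightarrow \Big(\int_{\R^d}\phi(x)\,dx\Big)\prod_{j=1}^d\Big(\int_0^1 e^{it_jy}\,dy\Big).
\end{equation}

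To prove \eqref{eq:riemann-target}, the key observation is that the function $x\mapsto \exp(i\sum_j t_j\{nx_j\})$ is periodic with period $1/n$ in each coordinate, and its average over one period cell equals exactly $\prod_j\int_0^1 e^{it_jy}\,dy$. Thus one expands $e^{i\sum_j t_j\{y_j\}}=\sum_{m\in\Z^d}c_m e^{2\pi i\langle m,y\rangle}$ in a multiple Fourier series on $[0,1)^d$, with $c_0=\prod_j\int_0^1 e^{it_jy}\,dy$, giving
\begin{equation}
\int_{\R^d}\phi(x)\,e^{i\sum_j t_j\{nx_j\}}\,dx=\sum_{m\in\Z^d}c_m\int_{\R^d}\phi(x)\,e^{2\pi i n\langle m,x\rangle}\,dx=\sum_{m\in\Z^d}c_m\,\widehat\phi(-2\pi n m),
\end{equation}
where $\widehat\phi$ is the Fourier transform of $\phi\in L^1$. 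The $m=0$ term is precisely the right-hand side of \eqref{eq:riemann-target}; for $m\neq 0$, $\widehat\phi(-2\pi nm)\to 0$ as $n\to\infty$ by the Riemann--Lebesgue lemma, and a dominated-convergence argument using $|c_m|\le$ (something summable, obtained from smoothness of $y\mapsto e^{it\{y\}}$ away from the jump, or simply $|c_m|\le 1$ together with a truncation of the sum) lets one pass the limit inside. The roles of the hypotheses ``$v$ continuously differentiable on the open convex $A$ and vanishing outside'' are to guarantee $v\in L^1$ and, more importantly, that the combined density $x\mapsto h(x)v(x)$ can be taken in $L^1$ for the relevant $h$, so that the Riemann--Lebesgue step applies; convexity of $A$ and the $C^1$ bound are not really needed for this route but do no harm, and can instead be used for an alternative elementary proof via integration by parts cell-by-cell.

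\textbf{Main obstacle.} The analytic core \eqref{eq:riemann-target} is routine once set up, so the real work is the reduction: carefully justifying that $\f$-stable convergence of $(\{nV_j\})_j$ follows from \eqref{eq:riemann-target} applied to densities of the form $x\mapsto\E[W\mid V=x]v(x)$, i.e.\ handling the conditioning on $\f$ and the passage from indicators $W$ to a generating class, and checking that the resulting conditional density is integrable so Riemann--Lebesgue applies. I expect this bookkeeping — rather than any hard estimate — to be the delicate part, and it is exactly where the absolute continuity of $V$ together with the regularity of $v$ on $A$ is used.
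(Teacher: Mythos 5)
Your proposal is correct in substance but takes a genuinely different route from the paper. The paper tests stable convergence against $C^1$ functions $f(x,u)$ vanishing outside a closed ball in $A\times\R^d$, sets $\phi=f\cdot v$, decomposes $\R^d$ into cubes of side $\rho=1/n$, and applies the mean value theorem to $\phi$ cell by cell; this yields the quantitative bound $|\E[f(V,\{nV\})]-\E[f(V,U)]|\le K/n$, after which stable convergence follows from \cite[Proposition~2(D'')]{Aldous}. This is exactly where the hypotheses on $v$ (continuously differentiable on the open convex $A$, vanishing outside) are used: they make $\phi$ Lipschitz on a compact set. Your route instead conditions on $\sigma(V)$ (legitimate, since $\{nV\}$ is $\sigma(V)$-measurable and the limit $U$ is independent of $\f$, so the passage from $h(V)=\E[W\mid V]$ back to general $\f$-measurable $W$ is automatic) and reduces, via the continuity theorem for the finite measures $B\mapsto\E[W\1_{\{\{nV\}\in B\}}]$, to the Weyl--Tukey-type claim that $\int_{\R^d}\phi(x)\,e^{i\sum_j t_j\{nx_j\}}\,dx\to(\int\phi)\prod_j\int_0^1 e^{it_jy}\,dy$ for every $\phi\in L^1(\R^d)$. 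This needs no smoothness of $v$ at all, so it proves a slightly more general statement, but it yields no rate of convergence; the paper's argument buys the explicit $O(1/n)$ bound at the price of the regularity assumptions.

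The one step you should not leave as written is the dominated-convergence pass through the Fourier series ``using $|c_m|\le$ something summable''. The coefficients factor as $c_m=\prod_j c^{(j)}_{m_j}$ with $|c^{(j)}_{m_j}|\asymp(1+|m_j|)^{-1}$, because $y\mapsto e^{it_jy}$ on $[0,1)$ has a jump at the endpoints unless $e^{it_j}=1$; hence $\sum_{m\in\Z^d}|c_m|=\infty$ already for $d=1$, and there is no summable dominating sequence. The repair is the truncation you mention only in passing: split off a trigonometric polynomial (e.g.\ the Fej\'er means), apply Riemann--Lebesgue to its finitely many nonzero frequencies, and control the remainder uniformly in $n$ for bounded compactly supported $\phi$ using periodicity and the $L^1([0,1)^d)$-smallness of the remainder, then extend to all of $L^1$ by density and the bound $|e^{i\cdot}|\le 1$. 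Alternatively, and more simply, prove the analytic claim first for $\phi$ the indicator of a box by counting whole periods (the error is $O(1/n)$ boundary cells) and extend by $L^1$-density. With either replacement your argument is complete.
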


\begin{proof}
For $x=(x_1,\dots,x_d)\in \R^k$ let $\{x\}=(\{x_1\},\dots,\{x_d\})$ be the fractional parts of its components. 
Let  $f:\R^d\times \R^d\to \R$ be a $C^1$-function which vanish outside some closed ball  in $A\times \R^d$.  We claim that 
for all $\rho>0$ there exists a constant 
$K>0$ such that  
\begin{align}\label{eq-est-f-g}
D_\rho:= \Big| \int_{\R^d}  f(x,\{ x/\rho \})\, v(x) \,dx-\int_{\R^k}\Big(\int_{[0,1]^d} f(x,u)\,du\Big)v(x)\,dx\Big|  \leq   K \rho. 
\end{align}
Indeed, by \eqref{eq-est-f-g} used for $\rho=1/n$ we obtain that 
\begin{equation}\label{sdkfh}
\E[f(V,\{n V\})]\to \E[f(V,U)]\qquad \text{as }n\to \infty,
\end{equation}
with $U=(U_1,\dots, U_d)$ given in the lemma. Moreover, due to \cite[Proposition~2(D'')]{Aldous},  \eqref{sdkfh}  implies the stable convergence  $\{n V\}\stab U$ as $n\to\infty$,  and the proof  is  complete. Thus, it only remains to prove the inequality \eqref{eq-est-f-g}. At this stage we use a similar 
technique as in  \cite[Lemma~6.1]{Jacod-round-off}. 
 
Define   $\phi(x,u): =f(x,u)v(x)$. Then it holds by substitution that  
\begin{align}
\int_{\R^d}  f(x,\{  x/\rho \})v(x)\,dx= 
\sum_{j\in \Z^d} \int_{(0,1]^d} \rho^d \phi(\rho j + \rho u , u)\,du 
\end{align}
and
\begin{align}
\int_{\R^d}\Big(\int_{[0,1]^d} f(x,u)\,du\Big)v(x)\,dx={}& 
\sum_{j\in \Z^d}  \int_{[0,1]^d} \Big(\int_{(\rho j,\rho (j+1)]}\phi(x,u)\,dx\Big)\,du.
\end{align}
Hence, we conclude that  
\begin{align}
D_\rho\leq {}& \sum_{j\in \Z^d} \int_{(0,1]^d} \Big|\int_{(\rho j,\rho (j+1)]}\phi(x,u)\,dx-\rho^d \phi(\rho j + \rho u , u)\Big|\,du 
 \\  \leq {}& \sum_{j\in \Z^d} \int_{(0,1]^d} \int_{(\rho j,\rho (j+1)]}\Big|\phi(x,u)- \phi(\rho j + \rho u , u)\Big|\,dx\,du .
\end{align}
By mean value theorem there exists a positive constant $K$ and a compact set $B\subseteq \R^d\times \R^d$ such that for all $j\in \Z^d,\ x\in (\rho j,\rho(j+1)]$ and $u\in (0,1]^d$ we have 
\begin{equation}
\Big|\phi(x,u)- \phi(\rho j + \rho u , u)\Big|\leq K \rho \1_B(x,u). 
\end{equation}
Thus, 
$
D_\rho\leq K \rho \int_{(0,1]^d} \int_{\R^d}\1_{B}(x,u)\,dx\,du
$,
which shows \eqref{eq-est-f-g}. 
\end{proof}

\begin{proof}[Step~(i): The compound Poisson case]
Let $L=(L_t)_{t\in \R}$ be a compound Poisson process
and let $0\leq T_1<T_2<\ldots$ denote the jump times of the L\'evy process 
$(L_t)_{t\geq 0}$ chosen in increasing order.  Consider a fixed  $\epsilon>0$ and let $n\in \N$ satisfy $\varepsilon >n^{-1}$. We define 
\begin{align}
\Omega_{\varepsilon}:= \Big\{\omega \in \Omega : {}& \text{for all }k\geq 1\text{ with $T_k(\omega)\in [0,1]$ we have  }|T_k(\omega )-T_{k-1}(\omega)|> \varepsilon/2 \\ {}& \text{and }\Delta L_s=0\text{ for all }s\in [-\epsilon,0]\Big\}. 
% \text{ and for all $i=1,\dots,n$}\\ {}& \text{there is at most one $j$ such that } T_j(\omega)\in [(i-1)/n, i/n]\Big\}. 
\end{align}
Notice that $\Omega_{\varepsilon} \uparrow\Omega$ as  $\varepsilon \downarrow 0$. Now, we decompose for $i=k,\dots,n$
\[\label{rep-X-42}
\Delta_{i,k}^n X = M_{i,n,\varepsilon} + R_{i,n,\varepsilon}, 
\]
where 
\begin{align*}
M_{i,n,\varepsilon} = {}& 
\int_{\frac{i}{n}-\varepsilon /2}^{\frac{i}{n}} g_{i,n}(s)\,dL_s, \qquad \qquad R_{i,n,\varepsilon} = \int_{-\infty }^{\frac{i}{n}-\varepsilon /2} g_{i,n}(s)\,dL_s,
\end{align*}
and the function  $g_{i,n}$ is introduced in  \eqref{def-g-i-n}.
 The term $M_{i,n,\varepsilon}$ represents the dominating quantity, while $R_{i,n,\varepsilon}$ turns out to be negligible.

\noindent \textit{The dominating term:}
We claim that on $\Omega_\epsilon$ and as $n\to \infty$,
\begin{equation}\label{main-con}
 n^{\alpha p} \sum_{i=k}^n |M_{i,n,\epsilon}|^p\stab Z\qquad\text{where}\qquad Z=\sum_{m:\, T_m\in (0,1]} |\Delta L_{T_m}|^p V_m,
% \left( \sum_{l=0}^{\infty} |h(l+U_m)|^p \right)
\end{equation}
where $V_m$, $m\geq 1$, are  defined in \eqref{part1}.
To show \eqref{main-con} let  $i_m=i_m(\omega,n)$ denote the (random) index such that $T_m\in ((i_m-1)/n, i_m/n]$. The following representation will be crucial:  On $\Omega_{\varepsilon}$ we have that
\begin{align}
\label{cru-decomp}
{}& n^{\alpha p} \sum_{i=k}^n | M_{i,n,\varepsilon}|^p =  V_{n,\varepsilon}\qquad\qquad \text{with}\\
%V_{n,\varepsilon} (1) &= n^{\alpha p} \sum_{m} |\Delta L_{T_m}|^p | g(i_m/n - T_m) |^p \\[1.5 ex]
{}&  V_{n,\varepsilon}  = n^{\alpha p} \sum_{m:\,T_m\in (0,1]} |\Delta L_{T_m}|^p \left(
\sum_{l=0}^{[\varepsilon n/2]+v_m} |g_{i_m+l,n}(T_m)|^p \right)
\end{align}
where the random index $v_m=v_m(\omega,n,\epsilon)$ are given by $v_m=0$ if $([\epsilon n/2]+i_m)/n-\epsilon/2< T_m$ and $v_m=-1$ else.
Indeed,  on $\Omega_\epsilon$ and for each $i=k,\dots,n$, $L$ has at most one jump in  $(i/n-\epsilon/2,i/n]$. For each $m\in \N$ with $T_m\in (0,1]$ we have  $T_m\in (i/n-\epsilon/2,i/n]$ if and only if $i\in \{i_m,\dots,[\epsilon n/2]+i_m+v_m\}$ (recall that $\epsilon>n^{-1}$). Thus,
\begin{equation}\label{sldfj}
\sum_{i\in \{k,\dots,n\}:\, T_m\in (i/n-\epsilon/2,i/n]} | M_{i,n,\varepsilon}|^p=|\Delta L_{T_m}|^p\left(\sum_{l=0}^{[\varepsilon n/2]+v_m} |g_{i_m+l,n}(T_m)|^p \right)
\end{equation}
and by summing \eqref{sldfj} over all  $m\in \N$ with $T_m\in (0,1]$,   \eqref{cru-decomp} follows. In the following we will show that 
\begin{equation}\label{con-Z-V-n}
V_{n,\epsilon} \stab Z
%:=\sum_{m:\, T_m\in [0,1]} |\Delta L_{T_m}|^p \left( \sum_{l=0}^{\infty} | h(l+U_m)|^p \right)
\qquad \text{as }n\to\infty. 
\end{equation}
For $d\geq 1$ it is well-known that  the random vector 
$(T_1,\dots,T_d)$ is absolutely continuous with a $C^1$-density   
on  the open convex set $A:=\{(x_1,\dots,x_d)\in \R^d\!: 0<x_1<x_2<\dots<x_d\}$  which is vanishing outside $A$, and thus, by Lemma~\ref{frac-part-lem}  we have  
\begin{equation}\label{eq-con-T_k-U}
(\{nT_m\})_{m\leq d}\stab (U_m)_{m\leq d}\qquad \text{as } n\to \infty
\end{equation}
where  
$(U_i)_{i\in \N}$ are  i.i.d.\ $\mathcal U([0,1])$-distributed random variables. By \eqref{kshs} we may write  $g(x)=x^\alpha_+ f(x)$  where $f\!:\R\to\R$  satisfies $f(x)\to 1$ as $x\downarrow 0$. 
By definition of $i_m$ we have that $\{n T_m\}=i_m-nT_m$ and therefore  
for all $l=0,1,2,\dots$ and $j=0,\dots,k$,  
\begin{align}\label{eq-sdlfj-1}
  {}&  n^\alpha g\Big(\frac{l+i_m-j}{n}-T_m\Big)= n^\alpha \Big(\frac{l+i_m-j}{n}-T_m\Big)^\alpha_+ f\Big(\frac{l+i_m-j}{n}-T_m\Big)\\ \label{eq-sdlfj-2}  {}& \qquad  =\Big(l-j+(i_m-nT_m)\Big)^\alpha_+ f\Big(\frac{l-j}{n}+n^{-1}(i_m-nT_m)\Big)
  \\ {}&\qquad = \Big(l-j+\{nT_m\}\Big)^\alpha_+ f\Big(\frac{l-j}{n}+n^{-1}\{nT_m\}\Big),
\end{align}
which by  \eqref{eq-con-T_k-U} and $f(x)\to 1$ as $x\downarrow 0$  shows that  
\begin{equation}\label{con-g-U}
\Big\{n^\alpha g\Big(\frac{l+i_m-j}{n}-T_m\Big)\Big\}_{l,m\leq d}
\stab \Big\{\big(l-j+U_m\big)^\alpha_+\Big\}_{l,m\leq d}  \qquad \text{as } n\rightarrow \infty. 
\end{equation}
 Eq.~\eqref{con-g-U} implies that 
\begin{align}\label{stab-con-g-23}
\big\{n^\alpha  g_{i_m+l,n}(T_m)\big\}_{l,m\leq d} \stab \big\{h_k(l+U_m)\big\}_{l,m\leq d},
\end{align}
with $h_k$ being defined at \eqref{def-h-13}. Due to the $\f$-stable convergence in \eqref{stab-con-g-23}  we obtain 
by the continuous mapping theorem that for each fixed $d\geq 1$  and as $n\to \infty$, 
\begin{align}
{}& V_{n,\epsilon,d}:=n^{\alpha p} \sum_{m:\,m\leq d,\,T_m\in [0,1]} |\Delta L_{T_m}|^p \left(
\sum_{l=0}^{[\varepsilon d/2]+v_m} |\Delta_{i_m+l,k}^ng(T_m)|^p \right)
\\ {}& \qquad  \stab Z_{d}=\sum_{m:\, m\leq d,\, T_m\in [0,1]} |\Delta L_{T_m}|^p \left( \sum_{l=0}^{[\varepsilon d/2]+v_m} | h_k(l+U_m)|^p \right).\label{eq-erlkj}
\end{align}
Moreover, for $\omega\in \Omega$ we have as $d\to \infty$,
\begin{equation}
Z_{d}(\omega)\uparrow Z(\omega).
%\qquad \text{with}\qquad  Z:=\sum_{m:\, T_m\in [0,1]} |\Delta L_{T_m}|^p \left( \sum_{l=0}^{\infty} | h(l+U_m)|^p \right).
\end{equation}
Recall that  $|h_k(x)|\leq K (x-k)^{\alpha-k}$ for $x>k+1$, which implies that $Z<\infty$ a.s.\ since $p(\alpha-k)<-1$.
For all $l\in \N$ with $k\leq l\leq n$, we have 
\begin{equation}\label{eq:7365}
n^{\alpha p } |g_{i_m+l,n}(T_m)|^p \leq K  | l-k| ^{(\alpha-k)p },
\end{equation}
due to \eqref{lemest2} of Lemma \ref{helplem}. 
For all $d\geq 0$ set  $C_d=\sum_{m>d:\, T_m\in [0,1]} |\Delta L_{T_m}|^p$ and note that $C_d\to 0$ a.s.\ as $d\to \infty$ since $L$ is a compound Poisson process. 
By \eqref{eq:7365} we have  
\begin{align}
 |V_{n,\epsilon}- V_{n,\epsilon,d}|
\leq {}& K\Big( C_d+ C_0 \sum_{l=[\varepsilon d/2]-1}^{\infty} |l-k|^{p(\alpha-k)}\Big) \to 0 \qquad \text{as }d\to \infty 
%\sup_{m\in \N}  \left(n^{\alpha p} 
%\sum_{l=[\varepsilon d/2]-1}^{[\varepsilon n/2]} |g_{i_m+l,n}(T_m)|^p \right)
%\\ \leq {}& K C \sup_{m\in \N} \left(\sum_{l=[\varepsilon d/2]-1}^{[\varepsilon n/2]} \Big|n^{\alpha} n^{-k} \Big(\frac{i_m+l-k}{n}-T_m\Big)^{\alpha-k}\Big|^p\right) 
%\\  \leq {}& K C \sup_{m\in \N} \left(\sum_{l=[\varepsilon d/2]-1}^{[\varepsilon n/2]} \Big|\Big( l-k + n \big(\frac{i_m}{n}-T_m\big)\Big)^{\alpha-k}\Big|^p\right)
%\\ \leq {}& K C \sum_{l=[\varepsilon d/2]-1}^{\infty} |l-k|^{p(\alpha-k)}
%\to 0\qquad \text{as }d\to \infty
\end{align}
since $p(\alpha-k)<-1$. 
Due to the fact that  $n^{\alpha p} \sum_{i=k}^n | M_{i,n,\varepsilon}|^p=V_{n,\epsilon}$ a.s.\ on $\Omega_\epsilon$ and 
$V_{n,\epsilon}\stab Z$, it follows that $n^{\alpha p} \sum_{i=k}^n | M_{i,n,\varepsilon}|^p\stab Z$ on $\Omega_\epsilon$, since $\Omega_\epsilon\in \f$. This proves \eqref{main-con}.

\noindent 
\textit{The rest term:}
In the following we will show that 
\begin{equation}\label{rest-1}
n^{\alpha p} \sum_{i=k}^n | R_{i,n,\epsilon}|^p\toop 0\qquad \text{as } n\to \infty. 
\end{equation}
 The fact that  the random variables in \eqref{rest-1} are  usually not integrable makes the proof of \eqref{rest-1} considerable   more complicated.
Similar as in  \eqref{lemest3} of Lemma~\ref{helplem} we have that 
\begin{equation}\label{est-g-34}
n^k |g_{i,n}(s)| \1_{\{s\leq i/n-\epsilon\}}\leq K\big(\1_{\{s\in [-1,1]\}}+
\1_{\{s<-1\}} |g^{(k)}(-s)|\big)=:\psi(s) 
\end{equation}
where $K=K_\epsilon$. 
We will use the function $\psi$ several times in the proof of  \eqref{rest-1}, which will be divided into the two special cases $\theta\in (0,1]$ and $\theta\in (1,2]$. 

Suppose first that  $\theta\in (0,1]$. 
To show \eqref{rest-1} it suffices to show that 
\begin{equation}\label{we-want-52-1}
\sup_{n\in \N,\, i\in \{k,\dots,n\}} n^k|R_{i,n,\epsilon}|<\infty\qquad \text{a.s.}
\end{equation}
 since $\alpha<k-1/p$. 
 To show \eqref{we-want-52-1} 
we will  first prove that 
\begin{equation}\label{lshlfsjl}
\int_\R \int_{\R} \Big(|\psi(s)x|\wedge 1\Big)\,\nu(dx)\,ds<\infty.
\end{equation}
Choose $\tilde K$ such that $\psi(x)\leq \tilde K$ for all $x\in \R$. For $u\in [-\tilde K,\tilde K]$ we have that 
\begin{align} 
\int_{\R} \Big(|u x|\wedge 1\Big)\,\nu(dx) \leq   {}&   K \int_1^\infty \Big( |xu|\wedge 1 \Big)x^{-1-\theta}\,dx
\\ 
\leq {}& 
\begin{cases}
K |u|^\theta   & \theta\in (0,1) \\ 
K |u|^\theta\log(1/u)  & \theta =1, 
\end{cases}\label{lsfjdslfhl}
 \end{align}
 where we have used that $\theta\leq 1$. 
By \eqref{lsfjdslfhl} applied 
to $u=\psi(s)$ and assumption (A) it follows that \eqref{lshlfsjl} is satisfied. 
Since $L$ is a symmetric compound Poisson process we can find a Poisson random measure $\mu$ with compensator $\lambda \otimes \nu$ such that for all $-\infty<u<t<\infty$, $L_t-L_u=\int_{(u,t]\times \R} x\,\mu(ds,dx)$.
Due to \cite[Theorem~10.15]{k83}, \eqref{lshlfsjl} ensures the existence  of the stochastic 
integral $\int_{\R\times \R} |\psi(s)x| \,\mu(ds,dx)$. Moreover, $\int_{\R\times \R} |\psi(s)x| \,\mu(ds,dx)$ can be regarded as an $\omega$ by $\omega$  integral with respect to the measure $\mu_\omega$. 
Now, we have that 
\begin{align}\label{eq:883}
 | n^k R_{i,n,\epsilon} |\leq      \int_{(-\infty,i/n-\epsilon]\times \R}  \big| n^k g_{i,n}(s)x \big|\, \mu(ds,dx)  \leq \int_{\R\times \R} |\psi(s)x| \,\mu(ds,dx)<\infty, \qquad 
\end{align} 
which shows \eqref{we-want-52-1}, since the right-hand side of \eqref{eq:883} does not depend on $i$ and $n$.  
 
Suppose that  $\theta\in (1,2]$. 
Similarly as  before it suffices to show that 
\begin{equation}\label{we-want-52}
\sup_{n\in \N,\, i\in \{k,\dots,n\}} \frac{n^k|R_{i,n,\epsilon}| }{(\log n)^{1/q}}<\infty\qquad \text{a.s.}
\end{equation}
where $q>1$ denotes the conjugated number to $\theta>1$ determined by $1/\theta+1/q=1$.  
In the following we will show \eqref{we-want-52}  using the majorizing measure techniques developed in \cite{Marcus-Rosinski}. In fact, our  arguments are closely related to their Section~4.2.   
Set $T=\{(i,n)\!: n\geq k,\,i=k,\dots, n\}$. For $(i,n)\in T$ we have 
\begin{equation}
\frac{n^k|R_{i,n,\epsilon}| }{(\log n)^{1/q}}=\Big|\int_\R \zeta_{i,n}(s)\,dL_s\Big|, \qquad 
\zeta_{i,n}(s):=\frac{n^k}{(\log n)^{1/q}} g_{i,n}(s)\1_{\{s\leq i/n-\epsilon\}}.
\end{equation}
For $t=(i,n)\in T$ we will sometimes write $\zeta_t(s)$ for $\zeta_{i,n}(s)$. 
%Let $q>2$ be the conjugate number to $\beta\in (1,2)$, i.e.\ $1/\beta+1/q=1$. 
Let  $\tau\!: T\times T\to \R_+$ denote the metric given by 
\begin{equation}
\tau\big((i,n),(j,m)\big)=\begin{cases}
\log(n-k+1)^{-1/q}+\log(m-k+1)^{-1/q} & (i,n)\neq (j,l) \\ 0 & (i,n)=(j,l). 
\end{cases}
\end{equation}
Moreover, let $m$ be the probability measure on $T$ given by  $m(\{(i,n)\})=K n^{-3}$ for a suitable constant $K>0$.   
Set $B_\tau(t,r)=\{s\in T\!: \tau(s,t)\leq r\}$ for $t\in T$, $r>0$,  $D=\sup\{\tau(s,t)\!: s,t\in T\}$ and 
\begin{align}
I_q(m,\tau;D)=\sup_{t\in T}\int_0^D \Big(\log \frac{1}{m(B_\tau(t,r))}\Big)^{1/q} dr.
\end{align}
In the following we will show that $m$ is a so-called majorizing measure, which means that  
$I_q(m,\tau,D)<\infty$. 
For $r< (\log (n-k+1))^{-1/q}$ we have $B_\tau((i,n),r)=\{(i,n)\}$. Therefore, $m( B_\tau((i,n),r))=Kn^{-3}$ and 
\begin{align}\label{slfj-2}
 \int_0^{(\log(n-k+1))^{-1/q}}  \Big(\log \frac{1}{m(B_\tau((i,n),r))}\Big)^{1/q} dr
 = \int_0^{(\log(n-k+1))^{-1/q}} \Big(3\log n+\log K\Big)^{1/q}\,dr.
\end{align} 
For all $r\geq (\log(n-k+1))^{-1/q}$, $(k,k)\in B_\tau((i,n),r)$  and hence $m(B_\tau((i,n),r))\geq m(\{(k,k)\})=K (k+1)^{-3}$. Therefore, 
\begin{align}\label{slfj-3}
{}& \int_{(\log(n-k+1))^{-1/q}}^D  \Big(\log \frac{1}{m(B_\tau((i,n),r))}\Big)^{1/q} \,dr 
\\ {}& \qquad 
\leq
\int_{(\log(n-k+1))^{-1/q}}^D  \Big(3\log(k+1)+\log K\Big)^{1/q}\,dr.
\end{align}
By \eqref{slfj-2} and \eqref{slfj-3} it follows that 
$I_q(m,\tau,D)<\infty$. 
For  $(i,n)\neq (j,l)$ we have that 
\begin{align}\label{est-g-35}
\frac{| \zeta_{i,n}(s)-\zeta_{j,l}(s)|}{\tau\big((i,n),(j,l)\big)}\leq {}& 
n^k |g_{i,n}(s)| \1_{\{s\leq i/n-\epsilon\}}+l^k |g_{j,l}(s)| \1_{\{s\leq j/l-\epsilon\}}\leq K \psi(s).
%\\ \leq {}& 2C_2 \Big(\1_{\{s\in [-1,1]\}}+
%\1_{\{s<-1\}} |g^{(k)}(-s)|\Big).
\end{align}
Fix $t_0\in T$ and consider the following Lipschitz
type norm of $\zeta$, 
\begin{equation}
\|\zeta \|_\tau(s)= D^{-1}|\zeta_{t_0}(s)|+ \sup_{\substack{t_1,t_2\in T:\\ \, \tau(t_1,t_2)\neq 0}} \frac{|\zeta_{t_1}(s)-\zeta_{t_2}(s)|}{\tau(t_1,t_2)}.
\end{equation}
By \eqref{est-g-35} it follows  that 
$\| \zeta \|_{\tau}(s)\leq K  \psi(s) $
and hence   
\begin{equation}\label{eq-tau-norm}
\int_\R \|\zeta \|^\theta_\tau(s)\,ds\leq K\Big(2+\int_1^\infty |g^{(k)}(s)|^\theta\,ds\Big)<\infty.
\end{equation}
By \cite[Theorem~3.1, Eq.~(3.11)]{Marcus-Rosinski} together with $I_q(m,\tau,D)<\infty$ and \eqref{eq-tau-norm} we deduce  \eqref{we-want-52}, which completes the proof of \eqref{rest-1}.

\noindent
\textit{End of the proof:}
 Recall the decomposition $\Delta^n_{i,n}X= M_{i,n,\epsilon}+R_{i,n,\epsilon}$ in  \eqref{rep-X-42}. Eq.~\eqref{main-con}, \eqref{rest-1} and an application of Minkowski inequality yield that  
 \begin{equation}\label{eqyert}
n^{\alpha p} V(p;k)_n\stab Z\qquad \text{ on $\Omega_\epsilon$ as $n\to \infty$.}
 \end{equation}
 Since $\Omega_\epsilon\uparrow \Omega$ as $\epsilon\to 0$, \eqref{eqyert} implies that 
 \begin{equation}
n^{\alpha p} V(p;k)_n\stab  Z. 
 \end{equation}
We have now completed the proof for a particular  choice 
of stopping times $(T_m)_{m\geq 1}$, however, the result remains valid for any choice of $\F$-stopping times, since the distribution 
of $Z$ is invariant with respect to reordering of stopping times. 
\end{proof}

\noindent
{\em Step~(ii): An approximation.}
 To prove Theorem~\ref{maintheorem}(i) in the general case we need the following approximation result.
Consider a general symmetric L\'evy process $L=(L_t)_{t\in \R}$ as in Theorem~\ref{maintheorem}(i)
and let $N$ be the corresponding Poisson random measure $N(A):= \sharp\{t: (t,\Delta L_t)\in A\}$ for all measurable  $A\subseteq \R\times (\R\setminus\{0\})$.
%
%
% For a fixed $j\in \N$ let  $L(j)=(L_t(j))_{t\in \R}$ denote the truncated   L\'evy process defined by 
%\begin{align}\label{trunca-sdfl}
%L_t(j)-L_s(j)={}& L_t-L_s-\sum_{u\in (s,t]} \Delta L_u\1_{\{|\Delta L_u|>\frac{1}{j}\}}, \qquad s<t.
%\end{align}
%We have that 
%\begin{equation*}
%\E[e^{i u  L_1(j)}]=\exp\Big(\int_{|x|\leq 1/j} \big(e^{i u x} -1-ix u \big)\,\nu(dx)\Big),\qquad u\in\R.
%\end{equation*}
By our assumptions (in particular, by  symmetry),  the  process $X(j)$ given by 
\begin{equation}\label{def-X-jaj}
X_t(j)=\int_{(-\infty,t]\times [-\frac{1}{j},\frac{1}{j}]} \big\{(g(t-s)-g_0(-s))x \big\}\,N(ds,dx)
\end{equation}
is well-defined. The following estimate on the processes $X(j)$ will be  crucial:

\begin{lem}\label{lem-1}
Suppose that   $\alpha<k-1/p$  and  $\beta<p$. 
%$\int_{|x|\leq 1} |x|^p\,\nu(dx)<\infty$ (in particular, if $p>\beta$). 
Then  
\begin{equation*}
\lim_{j\to \infty} \limsup_{n\to \infty}\P\big(n^{\alpha p}V(X(j))_n>\epsilon\big)=0\qquad \text{for all }\epsilon>0.
\end{equation*}
\end{lem}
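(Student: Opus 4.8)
The plan is to estimate the $p$th moment (or a fractional moment, if $p \geq 1$ forces integrability issues — but here the summands of $V(X(j))_n$ are built from the \emph{small jumps} $[-1/j,1/j]$, which have all moments since $\beta < 2$, so $\mathbb{E}[|\Delta^n_{i,k}X(j)|^p]$ is finite) and show it tends to $0$ as $n\to\infty$ and then $j\to\infty$. First I would write $\Delta^n_{i,k}X(j) = \int_{\R} g_{i,n}(s)\, dL^{(j)}_s$, where $L^{(j)}$ is the L\'evy process with L\'evy measure $\nu_j := \nu|_{[-1/j,1/j]}$ (and no Gaussian part, by symmetry no drift), using the notation $g_{i,n}$ from \eqref{def-g-i-n}. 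The key analytic input is a moment bound for stochastic integrals against a pure-jump L\'evy process with small jumps: for $p \in (0,2)$ and $\varphi \in L^\beta \cap L^p$ (or the relevant combination), one has an estimate of the shape
\begin{equation*}
\mathbb{E}\Big[\Big|\int_{\R}\varphi(s)\,dL^{(j)}_s\Big|^p\Big] \leq K\Big( \int_{\{|\varphi|\le 1\}}\!\!|\varphi(s)|^{\beta\vee?}\,ds \;+\; \text{(tail/large-value terms)}\Big),
\end{equation*}
but since $X(j)$ only involves jumps bounded by $1/j$, the cleaner route is a two-sided bound via the L\'evy--Khintchine exponent: the characteristic function of $\int \varphi\, dL^{(j)}$ is $\exp\big(\int_\R\int_{[-1/j,1/j]}(\cos(u\varphi(s)x)-1)\,\nu(dx)\,ds\big)$, and from \eqref{eq:74} one gets $\int(|yx|^2\wedge 1)\nu(dx) \le K|y|^{\beta+\epsilon}$ for $|y|\le 1$. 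I would use this to bound $\mathbb{E}[|\Delta^n_{i,k}X(j)|^p]$ by invoking a standard inequality (e.g. from \cite{RajRos} or a Rosenthal-type estimate, as in the paper's other moment lemmas) expressing the $p$th moment in terms of $\int_{\R}\int (|g_{i,n}(s)x|^2\wedge|g_{i,n}(s)x|^p)\,\nu_j(dx)\,ds$.

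Next I would plug in the pointwise estimates on $g_{i,n}$ from Lemma~\ref{helplem}: \eqref{lemest1} on the interval of length $k/n$ near $i/n$ gives $|g_{i,n}(s)| \le K(i/n-s)^\alpha \le K n^{-\alpha}$; \eqref{lemest2} gives $|g_{i,n}(s)| \le Kn^{-k}((i-k)/n-s)^{\alpha-k}$ on the unit-length window; and \eqref{lemest3} controls the far tail by $n^{-k}$ times an $L^\theta$ function. Since $g_{i,n}$ is small (of order $n^{-\alpha}$ near the diagonal, smaller elsewhere) one is in the regime $|g_{i,n}(s)x|\le 1$ for the relevant small $x$, so $\int(|g_{i,n}(s)x|^2\wedge 1)\nu_j(dx)$ is dominated by the small-jump part. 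Summing over $i = k,\dots,n$ and multiplying by $n^{\alpha p}$, I expect to get a bound of the form $n^{\alpha p}\sum_i \mathbb{E}[|\Delta^n_{i,k}X(j)|^p] \le K_j \cdot n^{\alpha p}\cdot n \cdot n^{-\alpha p'}\cdot(\text{stuff})$ where the exponent arithmetic, using $\alpha < k - 1/p$ and $\beta < p$, produces a negative power of $n$ for each fixed $j$ — OR a bound uniform in $n$ that is $\varepsilon_j \to 0$. The precise shape: after the dust settles, the dominant contribution is the near-diagonal piece, giving something like $n^{\alpha p} \cdot n \cdot \big(\int_{[-1/j,1/j]}|x|^p\nu(dx)\big)^{?} \cdot n^{-\alpha p - 1 + \text{correction}}$; the point is that the factor $\int_{[-1/j,1/j]} |x|^{p\wedge(\beta+\epsilon)}\,\nu(dx) \to 0$ as $j\to\infty$ because $\beta < p$ means $|x|^p$ is $\nu$-integrable near $0$, handling the limit in $j$, while the power counting in $n$ (exploiting $\alpha p < kp - 1$) keeps everything bounded or vanishing in $n$.

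The main obstacle I anticipate is the moment estimate itself when $p$ is not $\le 2$ or when the integrand $g_{i,n}$ is not uniformly small — one must carefully split the integration region of $s$ according to the three cases of Lemma~\ref{helplem} and, within each, split the $\nu_j$-integral at the level where $|g_{i,n}(s)x| = 1$, because the moment bound has a quadratic regime (small values) and a $p$-regime or tail regime (large values). Getting the exponents to line up — so that after the $n^{\alpha p}$ normalization and the sum over $n$ terms one still has a quantity that both stays bounded in $n$ and vanishes in $j$ — requires the two hypotheses $\alpha < k-1/p$ and $\beta < p$ to be used in tandem: $\alpha<k-1/p$ controls the sum over the polynomially-decaying tail of $|g_{i,n}|^p$ (ensuring $\sum_l l^{p(\alpha-k)} < \infty$, exactly as in Step~(i), cf. the bound after \eqref{eq:7365}), and $\beta < p$ ensures $\int_{[-1/j,1/j]}|x|^p\,\nu(dx)$ is finite and $\to 0$. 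I would also need to double-check that the far-field term from \eqref{lemest3}, which carries an $L^\theta$-norm of $g^{(k)}$ rather than a power of $n$, is handled by the boundedness assumptions in (A) (and possibly (A-log) if $\theta=1$, though here the relevant exponent against the small-jump measure is $\beta+\epsilon > 1$, so no log is needed); this term should contribute $n^{\alpha p}\cdot n \cdot n^{-kp}\cdot\|g^{(k)}\|^{?}_{L^\theta}\cdot(\text{small-jump factor})$, which is negligible since $\alpha < k$.
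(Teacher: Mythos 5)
Your proposal follows essentially the same route as the paper: Markov's inequality plus stationarity reduces everything to the single moment $\E[|n^{\alpha+1/p}\Delta^n_{k,k}X(j)|^p]$, which is then controlled via the Rajput--Rosi\'nski criterion by $\int_{|x|\le 1/j}\chi_n(x)\,\nu(dx)$ with $\chi_n$ built from $|g_{k,n}(s)x|^2\wedge|g_{k,n}(s)x|^p$ split at level $1$; the $s$-integral is cut into the three regions of Lemma~\ref{helplem}, the uniform-in-$n$ bound $\chi_n(x)\le K(|x|^p+x^2)$ is established using $\alpha<k-1/p$, and the $j$-limit is killed by $\int_{|x|\le 1/j}(|x|^p+x^2)\,\nu(dx)\to 0$ since $p>\beta$ --- exactly as in the paper. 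One small correction: the far-field term contributes $n^{p(\alpha+1/p-k)}$ times integrals of $|g^{(k)}|^2$ and $|g^{(k)}|^p$ over $\{|g^{(k)}|>1\}$, and its negligibility again uses $\alpha<k-1/p$ (not merely $\alpha<k$).
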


\begin{proof}
 By Markov's inequality and the stationary increments of $X(j)$ we have that 
\begin{align}
\P\big(n^{\alpha p}V(X(j))_n>\epsilon\big)
 \leq \epsilon^{-1}n^{\alpha p}  \sum_{i=k}^n \E [|\Delta^n_{i,k} X(j)|^p]\leq  \epsilon^{-1}n^{\alpha p +1} \E[|\Delta_{k,k}^n X(j)|^p].
\end{align}
Hence it is enough to show that 
\begin{equation}\label{con-Y}
\lim_{j\to \infty} \limsup_{n\to \infty}  \E[|Y_{n,j} |^p] = 0\qquad \text{with}\quad Y_{n,j}:=n^{\alpha +1/p} \Delta^n_{k,k} X(j).
\end{equation}
To show \eqref{con-Y} it sufficers  to show  
\begin{align}
\label{def-h-2}
{}& \lim_{j\to \infty} \limsup_{n\to \infty} \xi_{n,j}= 0\qquad \text{where}\qquad   \xi_{n,j}=   \int_{|x|\leq 1/j} \chi_n(x)\,\nu(dx)\quad \text{and} \\ \label{def-h}
{}& \chi_n(x)=\int_{-\infty}^{k/n} \Big(|n^{\alpha+1/p} g_{k,n}(s)x|^p\1_{\{|n^{\alpha+1/p} g_{k,n}(s)x |\geq 1\}}
\\ {}& \phantom{\chi_n(x)=\int_{-\infty}^{k/n} \Big(  }
+|n^{\alpha+1/p} g_{k,n}(s)x|^2\1_{\{|n^{\alpha+1/p} g_{k,n}(s)x |\leq 1\}}\Big)\,ds,
\end{align}
which follows from the  representation  
\begin{equation*}
Y_{n,j}=\int_{(-\infty,k/n]\times  [-\frac{1}{j},\frac{1}{j}]}\big(n^{\alpha+1/p} g_{k,n}(s)x\big)\, N(ds,dx),
\end{equation*}
and   by   \cite[Theorem~3.3 and the remarks  above it]{RajRos}. 
Suppose for the moment that there exists a finite constant $K>0$ such that 
\begin{equation}\label{est-h-ewrlj}
\chi_n(x)\leq  K (|x|^p+x^2) \qquad \text{for all } x\in [-1,1].
\end{equation}
Then, 
\begin{equation}\label{last-est-sldfj}
\limsup_{j\to\infty}\big\{\limsup_{n\to \infty} \xi_{n,j}\big\}\leq K \limsup_{j\to\infty}\int_{|x|\leq 1/j} (|x|^p+x^2)\,\nu(dx)=0
\end{equation}
since $p>\beta$. Hence it suffices to show the estimate \eqref{est-h-ewrlj}, which we will do in the following.

Let $\Phi_p\!:\R\to\R_+$ denote the function  $\Phi_p(y)= |y|^2\1_{\{|y|\leq 1\}} +|y|^p\1_{\{|y|> 1\}}$.
We split $\chi_n$ into the following three terms which need different treatments
 \begin{align}
  \chi_n(x) {}& = \int_{-k/n}^{k/n} \Phi_p\Big( n^{\alpha+1/p} g_{k,n}(s)x\Big)\,ds +\int_{-1}^{-k/n} \Phi_p\Big(n^{\alpha+1/p} g_{k,n}(s)x\Big)\,ds \\{}
 {}&\phantom{=}+\int_{-\infty}^{-1} \Phi_p\Big(n^{\alpha+1/p} g_{k,n}(s) x\Big)\,ds \\ {}
{}& =:   I_{1,n}(x)+I_{2,n}(x)+I_{3,n}(x).
 \end{align}
%We claim that 
%\begin{equation}\label{claim-2est}
%\int_{-\infty}^{k/n} |xf_n(s)|^2\1_{\{|xf_n(s)|\leq 1\}}\,ds\leq c |x|^p.
%\end{equation}
%To  show \eqref{claim-2est}  we  split the integral on the left-hand side  into three parts. 
\emph{Estimation of $I_{1,n}$}: By \eqref{lemest1} of Lemma \ref{helplem} we have that 
\begin{equation}\label{rep-g-slfj}
|g_{k,n}(s)|\leq K (k/n-s)^\alpha, \qquad s\in [-k/n,k/n].
\end{equation}  Since  $\Phi_p$ is increasing on $\R_+$, 
 \eqref{rep-g-slfj} implies that  
\begin{align}\label{est-v_n-1}
I_{1,n}(x)\leq K \int_{0}^{2k/n}\Phi_p\Big(x n^{\alpha+1/p}s^\alpha\Big)\,ds.
\end{align} 
By basic calculus it follows that 
\begin{align}
{}& \int_{0}^{2k/n} |xn^{\alpha+1/p}s^\alpha |^2\1_{\{|xn^{\alpha+1/p}s^\alpha|\leq 1\}}\,ds \nonumber
\\  
{}& \quad\leq K \Big( \1_{\{|x|\leq (2k )^{-\alpha}n^{-1/p}\}} x^2 n^{2/p-1} + \1_{\{|x|>(2k )^{-\alpha}n^{-1/p}\}} |x|^{-1/\alpha} n^{-1-1/(\alpha p)}\Big)
\\ {}& \quad\leq K (|x|^p +x^2). \label{lhsflh}
\end{align}
%where the last inequality is obtained by considering the case $p\leq 2$ and $p>2$ separately. 
%The inequalities 
%\begin{align}\label{lhsdfgh-1}
%{}& \1_{\{|x|\leq (2k )^{-\alpha}n^{-1/p}\}} x^2 n^{2/p-1} \leq K
%\begin{cases}  \1_{\{|x|\leq (2k )^{-\alpha}n^{-1/p}\}} |x|^p & p\leq 2 \\ 
%\1_{\{|x|\leq (2k )^{-\alpha}n^{-1/p}\}} x^2 &  p>2,
%\end{cases}
%\intertext{and} \label{lhsdfgh-2}
%{}& \1_{\{|x|>(2k )^{-\alpha}n^{-1/p}\}} |x|^{-1/\alpha} n^{-1-1/(\alpha p)}\leq K |x|^p,
%\end{align}
%show  that \eqref{lhsflh} is less than or equal to 
%\begin{align}\label{lhgsg}
% {}&  K \Big( \1_{\{|x|\leq (2k )^{-\alpha}n^{-1/p}\}} (|x|^p+x^2) + \1_{\{|x|>(2k )^{-\alpha}n^{-1/p}\}} |x|^{p} \Big)\leq K(|x|^p+x^2).
%\end{align}
Moreover, 
\begin{align}\label{est-h-v2}
{}&  \int_{0}^{2k/n} |x  n^{\alpha+1/p} s^\alpha|^p \1_{\{|x  n^{\alpha+1/p} s^\alpha|>1\}}\,ds\leq \int_{0}^{2k/n} |x  n^{\alpha+1/p} s^\alpha|^p 
  \,ds 
  \leq K |x|^p. 
\end{align}
Combining \eqref{est-v_n-1}, \eqref{lhsflh} and \eqref{est-h-v2}  show the estimiate 
$I_{1,n}(x)\leq K(|x|^p+x^2)$.
 
 \noindent
\emph{Estimation of $I_{2,n}$}:
By \eqref{lemest2} of Lemma \ref{helplem} it holds  that 
\begin{align}\label{est-ljsdflj}
|g_{k,n}(s)|\leq K n^{-k} |s|^{\alpha-k}, \qquad s\in (-1,-k/n). 
\end{align}
Again, due to the fact that $\Phi_p$ is increasing on $\R_+$, \eqref{est-ljsdflj} implies that 
\begin{align}\label{kjhsfg-231}
 I_{2,n}(x) \leq K \int_{k/n}^{1} \Phi_p(x n^{\alpha+1/p-k}
s^{\alpha-k})\,ds.
\end{align}
For  $\alpha\neq k-1/2$ we have  
\begin{align}
 {}& \int_{k/n}^{1} |xn^{\alpha+1/p-k} s^{\alpha-k}|^2\1_{\{|xn^{\alpha+1/p-k}s^{\alpha-k}|\leq 1\}}\,ds \nonumber
\\ {}&\quad \leq 
 K \Big(x^2 n^{2(\alpha+1/p-k)} 
+\1_{\{|x|\leq n^{-1/p}k^{-(\alpha-k)}\}} |x|^2 n^{2/p-1} 
 \\ {}& \qquad \qquad + 
 \1_{\{|x|> n^{-1/p}k^{-(\alpha-k)}\}}
|x|^{1/(k-\alpha)}n^{1/(p(k-\alpha))-1}\Big)
 \\ \label{lsdjflsh-2}{}& \quad \leq K \Big(x^2+|x|^p\Big)
\end{align}
where we have used that  $\alpha< k-1/p$. 
 For $\alpha=k-1/2$ we have 
\begin{align}
{}& \int_{k/n}^{1} |xn^{\alpha+1/p-k} s^{\alpha-k}|^2\1_{\{|xn^{\alpha+1/p-k}s^{\alpha-k}|\leq 1\}}\,ds \nonumber
\\ \label{lsdjflsh-4}{}&\qquad \leq 
x^2 n^{2(\alpha+1/p-k)} \int_{k/n}^1 s^{-1} \,ds=x^2 n^{2(\alpha+1/p-k)}\log(n/k)\leq  K x^2,
\end{align}
where we again have used   $\alpha<k-1/p$ in the last inequality. 
Moreover, 
\begin{align}\label{lsdjflsh-5}
{}& \int_{k/n}^{1} |xn^{\alpha+1/p-k} s^{\alpha-k}|^p\1_{\{|xn^{\alpha+1/p-k}s^{\alpha-k}|> 1\}}\,ds 
\\ {}& \qquad \leq 
K |x|^p n^{p(\alpha+1/p-k)} \Big(1+(1/n)^{p(\alpha-k)+1}\Big)\leq K |x|^p.  \label{lsdjflsh-6}
\end{align}
By \eqref{kjhsfg-231}, \eqref{lsdjflsh-2}, \eqref{lsdjflsh-4} and \eqref{lsdjflsh-6} we obtain the estimate $I_{2,n}(x)\leq K(|x|^p+x^2)$. 

\noindent
\emph{Estimation of $I_{3,n}$}:
For $s<-1$ we have that 
$
|g_{k,n}(s)|\leq K n^{-k} |g^{(k)}(-k/n-s)|,
$
by \eqref{lemest3} of Lemma \ref{helplem},
and hence 
\begin{align}\label{klhshlkslh}
{}&I_{3,n}(x)
\leq  K \int_{1}^{\infty} 
\Phi_p\big(n^{\alpha+1/p-k} g^{(k)}(s)\big)\,ds.
\end{align}
We have that 
\begin{equation}\label{iuyhswrhjl}
 \int_{1}^{\infty} |xn^{\alpha+1/p-k} g^{(k)}(s)|^2\1_{\{|xn^{\alpha+1/p-k}g^{(k)}(s)|\leq 1\}}\,ds
 %\\ \label{iuyhswrhjl-1}{}& \qquad 
 \leq 
 x^2 n^{2(\alpha+1/p-k)} \int_{1}^\infty |g^{(k)}(s)|^2\,ds.
\end{equation}
Since $|g^{(k)}|$ is decreasing on $(1,\infty)$ and $g^{(k)}\in L^\theta((1,\infty))$ for some $\theta\leq 2$, the integral on the right-hand side of \eqref{iuyhswrhjl} is finite. For $x\in [-1,1]$ we have  
\begin{align}
{}& \int_{1}^{\infty} |xn^{\alpha+1/p-k} g^{(k)}(s)|^p\1_{\{|xn^{\alpha+1/p-k}g^{(k)}(s)|> 1\}}\,ds \nonumber
 \\ {}& \qquad \leq 
 |x|^p n^{p(\alpha+1/p-k)} \int_{1}^\infty |g^{(k)}(s)|^p\1_{\{|g^{(k)}(s)|> 1\}}\,ds. \label{skdhhfho}
\end{align}
From  our assumptions it follows that the integral in \eqref{skdhhfho} is finite. By \eqref{klhshlkslh}, \eqref{iuyhswrhjl} and \eqref{skdhhfho} 
we have  that 
$I_{n,3}(x)\leq K (|x|^p+x^2)$ for all $x\in [-1,1]$,
which completes the proof of \eqref{est-h-ewrlj} and therefore also the proof of the lemma.  
\end{proof}

\noindent
{\em Step~(iii): The general case.}
In the following we will prove Theorem~\ref{maintheorem}(i) in the general case  by combining the above Steps~(i) and (ii). 

 \begin{proof}[Proof of Theorem~\ref{maintheorem}(i)]
Let $(T_m)_{m\geq 1}$ be a sequence of $\F$-stopping times that exhaust the jumps of $(L_t)_{t\geq 0}$. For each $j\in \N$ let $\hat L(j)$ be the L\'evy process given by 
\begin{equation}
\hat L_t(j)-\hat L_u(j)=\sum_{u\in (s,t]} \Delta L_u\1_{\{|\Delta L_u|>\frac{1}{j}\}},\qquad s<t,
\end{equation}
and set 
\begin{equation}
\hat X_t(j)= \int_{-\infty}^t \big(g(t-s)-g_0(-s)\big)\,d\hat L_s(j). 
\end{equation}
Moreover, set 
\begin{equation}
T_{m,j}=\begin{cases}
T_{m} & \text{if }|\Delta L_{T_m}|>\frac{1}{j} \\ \infty & \text{else}, 
\end{cases}
\end{equation}
and note that  $(T_{m,j})_{m\geq 1}$ is a sequence of $\F$-stopping times that exhausts the jumps of  $(\hat L_t(j))_{t\geq 0}$.
Since  $\hat L(j)$ is a compound Poisson process,  Step~1 shows that 
\begin{align}\label{app-12}
n^{\alpha p}V(\hat X(j))_n \stab Z_j:=\sum_{m:\,T_{m,j}\in [0,1]} |\Delta \hat L_{T_{m,j}}(j)|^p V_{m} \qquad \text{as } n\rightarrow \infty,
\end{align}
where $V_m$, $m\geq 1$, are defined in \eqref{part1}. 
By definition of $T_{m,j}$ and monotone convergence we have as $j\to \infty$, 
\begin{equation}\label{app-22}
Z_j=\sum_{m:\,T_{m}\in [0,1]} |\Delta L_{T_{m}}|^p V_{m} \1_{\{|\Delta L_{T_m}|>\frac{1}{j}|\}} \toas \sum_{m:\,T_{m}\in [0,1]} |\Delta L_{T_{m}}|^p V_m=:Z.
\end{equation}
Suppose first that $p\geq 1$ and decompose
 \begin{align}
\big(n^{\alpha p} V(X)_n\big)^{1/p}{}& = \big(n^{\alpha p}V(\hat X(j))_n\big)^{1/p} +\Big(\big(n^{\alpha p}V(X)_n\big)^{1/p}-\big(n^{\alpha p}V(\hat X(j))_n\big)^{1/p}\Big)
\\ {}& =: Y_{n,j}+U_{n,j}.\label{eq:746}
\end{align}
Eq.~\eqref{app-12} and \eqref{app-22} show 
\begin{equation}\label{eq:3456}
Y_{n,j}\xrightarrow[n\to \infty]{\mathcal{L}-s} Z_j^{1/p}\qquad \text{and}\qquad Z_j^{1/p}\xrightarrow[j\to \infty]{\P} Z^{1/p}. 
\end{equation}
Note that $X-\hat X(j)=X(j)$, where $X(j)$ is defined in \eqref{def-X-jaj}. For all $\epsilon>0$ we have by Minkowski's inequality  
\begin{align}\label{eq:63653}
{}& \limsup_{j \to \infty}\limsup_{n\to \infty} \P\big( |U_{n,j}|>\epsilon\big)  \leq 
\limsup_{j \to \infty}\limsup_{n\to \infty} \P\big(n^{\alpha p}V(X(j))_n>\epsilon^p\big)=0
\end{align}
where the last equality follows by Lemma~\ref{lem-1}. By a standard argument, see e.g.\ \cite[Theorem~3.2]{Billingsley}, \eqref{eq:3456} and \eqref{eq:63653} implies that  
$(n^{\alpha p} V(X)_n)^{1/p}\stab Z^{1/p}$ which completes the proof of Theorem~\ref{maintheorem}(i) when $p\geq 1$. For  $p<1$, Theorem~\ref{maintheorem}(i) follows by \eqref{app-12}, \eqref{app-22}, the inequality  
$| V(X)_n- V(\hat X(j))_n |\leq V(X(j))_n$  and \cite[Theorem~3.2]{Billingsley}. 
\end{proof}

%%%%%%%%%%%
\subsection{Proof of Theorem~\ref{maintheorem}(ii)}
%%%%%%%%%%%%%%%%

Suppose that $\alpha <k-1/\beta$, $p<\beta$ and $L$ is a symmetric $\beta$-stable L\'evy proces. In the proof of Theorem~\ref{maintheorem}(ii) we will use  
the process $V=(V_t)_{t\geq 0}$ given by 
\begin{equation}\label{def-pro-Ysf}
V_t=\int_{-\infty}^t h_k(t-s)\,dL_s
\end{equation}
to approximate the scaled version of  $k$-order increments of $X$.
For $\alpha<1-1/\beta$,  $Y$ is the $k$-order increments of a linear fractional stable motion. For $\alpha\geq 1-1/\beta$ the linear fractional stable motion is not well-defined, but $Y$ is well-defined since the function $h_k$ is locally bounded and satisfies  $|h_k(x)|\leq K x^{\alpha-k}$ for $x\geq k+1$,  
which implies that $h_k\in L^\beta(\R)$.

\begin{proof}[Proof of Theorem~\ref{maintheorem}(ii)]
By self-similarity of $L$ of index $1/\beta$ we have for all $n\in \N$, 
\begin{equation}\label{self-similar}
\{n^{\alpha+1/\beta} \Delta^n_{i,k} X\!:i=k,\dots,n\}\eqschw \{V_{i,n}\!:i=k,\dots,n\}
\end{equation}
where 
\begin{equation} \label{gndef} 
V_{i,n}= \int_{-\infty}^i    D^k g_{n}(i-s)\,dL_s,
%=\phi(x)f(x/n) 
\end{equation}
$g_n$ and $D^k$ are defined at \eqref{def-g-n} and \eqref{dkdef},  and $\eqschw$ means equality in distribution. In the following we will show that $V_{k,n}$ and $V_k$ are close in $L^p$ when $n$ is large, where process $V$ is  given by \eqref{def-pro-Ysf}. For $s\in \R$ let  
$\psi_n(s)=g_n(s)-s_+^{\alpha}$. Since $p<\beta$,   
\begin{align}\label{est-lkhgwkg}
\E[|V_{k,n}-Y_k|^p]=K \Big(\int_0^\infty | D^k \psi_n(s)  |^\beta \,ds\Big)^{p/\beta}.
\end{align}
To show that the right-hand side of \eqref{est-lkhgwkg} converge to zero we   note that 
\begin{align}\label{sdkkls}
{}& \int_{n+k}^\infty |D^k g_n(s)|^\beta\,ds \leq K n^{\beta(\alpha-k)} 
\int_{n+k}^\infty |g^{(k)}\big((s-k)/n\big)|^\beta\,ds
\\ \label{sdkkls-1} {}& \qquad = 
K n^{\beta(\alpha-k)+1} \int_{1 }^\infty |g^{(k)}(s)|^\beta\,ds\to 0\qquad \text{as }n\to \infty. 
\end{align}
Recall that for $\phi:s\mapsto s^\alpha_+$ we have $D^k \phi=h_k \in L^\beta(\R)$, which together with  \eqref{sdkkls}--\eqref{sdkkls-1}  show that  
\begin{align}\label{skdhks}
 \int_{n+k}^\infty |D^k \psi_n(s)|^\beta\,ds\leq {}& K\Big( \int_{n+k}^\infty |D^k g_n(s)|^\beta\,ds+\int_{n+k}^\infty |D^k s_+^{\alpha}|^\beta\,ds\Big)\xrightarrow[n\to\infty]{} 0.
\end{align}
By \eqref{lemest2} of Lemma \ref{helplem} it holds that 
\begin{equation}
|D^k g_n(s)|\leq  K(s-k)^{\alpha-k}
\end{equation}
for  $s\in (k+1,n)$.
Therefore,   for $s\in (0,n]$ we have 
\begin{equation}\label{dom-slfj}
|D^k\psi_n(s)|\leq K \big(\1_{\{s\leq k+1\}}+\1_{\{s>k+1\}}(s-k)^{\alpha-k}\big),  
\end{equation}
where  the function on the right-hand side of \eqref{dom-slfj} is in $L^{\beta}(\R_+)$. 
For fixed $s\geq 0$, $\psi_n(s)\to 0 $ as $n\to \infty$ by  assumption \eqref{kshs}, and hence $D^k \psi_n(s)\to 0$ as $n\to\infty$, which by \eqref{dom-slfj} and the dominated convergence theorem show that 
\begin{equation}\label{con-low}
\int_0^{ n} |D^k \psi_n(s)|^\beta\,ds\to 0. 
\end{equation}
By \eqref{est-lkhgwkg}, \eqref{skdhks} and \eqref{con-low}   we have $\E[|V_{k,n}-V_k|^p]\to 0 $ as $n\to \infty$, and hence 
  \begin{equation}\label{con-L-1}
\E\Big[ \frac{1}{n} \sum_{i=k}^n |V_{i,n}-Y_j|^p\Big]=  \frac{1}{n} \sum_{i=k}^n \E[|V_{i,n}-Y_i|^p]\leq \E[|V_{k,n}-Y_k|^p]\to 0 
 \end{equation} 
 as $n\to \infty$. 
Moreover,   $(V_t)_{t\in \R}$ is mixing since it is a symmetric stable moving average, see e.g.\ \cite{Ergodic-Cam}. This implies, in particular,  that the discrete time stationary sequence $\{Y_j\}_{j\in \mathbbm Z}$ is mixing and hence ergodic. According to Birkhoff's ergodic theorem 
\begin{equation}\label{Ergodic}
\frac{1}{n} \sum_{i=k}^n |V_i|^p \toas \E[|V_k|^p]=m_p \in (0,\infty)\qquad \text{as }n\to \infty,
\end{equation}
where $m_p$ has been defined in Theorem \ref{maintheorem}(ii). 
The equality  $m_p=\E[|V_k|^p]$  follows
 by \cite[Property~1.2.17 and 3.2.2]{SamTaq}.  
By  \eqref{con-L-1}, Minkowski's inequality and \eqref{Ergodic}, 
\begin{equation}\label{con-gsdfhk}
\frac{1}{n}\sum_{i=k}^n |V_{i,n}|^p\toop m_p \qquad \text{as }n\to\infty, 
\end{equation}
which  by  \eqref{self-similar} shows that 
\begin{align}
 n^{-1+p(\alpha+1/\beta)} V(X)_n={}& \frac{1}{n} \sum_{i=k}^n |n^{\alpha+1/\beta} \Delta^n_{i,k} X|^p   \eqschw \frac{1}{n}\sum_{i=k}^n |V_{i,k}|^p\toop m_p
\end{align}
as $n\to \infty$.  This completes the proof of Theorem~\ref{maintheorem}(ii).
\end{proof}

%%%%%%%%%%%%%
\subsection{Proof of Theorem~\ref{maintheorem}(iii)}
%%%%%%%%%%%%%%

We will derive Theorem~\ref{maintheorem}(iii) from   the  two lemmas below. For $k\in \N$ and $p\in [1,\infty)$ let $W^{k,p}$ denote the Wiener space of functions $\zeta\!:[0,1]\to\R$ which are $k$-times absolutely continuous with  $\zeta^{(k)}\in L^p([0,1])$ where $\zeta^{(k)}(t)=\partial^k \zeta(t)/\partial t^k$ $\lambda$-a.s. First we  will show that, under the conditions in Theorem~\ref{maintheorem}(iii),    $X\in W^{k,p}$ a.s.

\begin{lem}\label{abs-cont-sdf}
Suppose  that $p\neq \theta$, $p\geq 1$ and (A).
% $g$ is $k$-times continuous differentiable on $(0,\infty)$ and for all $j=1,\dots, k$, $|g^{(j)}(t)|\leq c t^{\alpha-j}$ for all $t\in (0,1)$,  $g^{(j)}\in L^\theta((1,\infty))$ and $|g^{(j)}|$ is decreasing on $(1,\infty)$. 
 If $\alpha>k-1/(p\vee \beta)$ then  
 \begin{equation}\label{sdhjlshjl}
X\in W^{k,p}\text{ a.s.}\qquad \text{and}\qquad \frac{\partial^{k}}{\partial t^k} X_t=\int_{-\infty}^t g^{(k)}(t-s)\,dL_s\qquad \lambda\otimes \P\text{-a.s.}
 \end{equation}
 Eq.~\eqref{sdhjlshjl} remains valid for  $p=\theta$  if, in  addition, (A-log) holds. 
 % $\int_1^\infty |g^{(j)}(s)|^\theta \log(1/|g^{(j)}(s)|)\,ds<\infty$ for all $j=1,\dots,k$. 
 % \begin{align}\label{klhsdfgw}
%{}& X_t=\int_0^t \cdots\int_0^{s_{3}}\Big(\int_0^{s_{2}} F_{s_1}\,d s_1\Big)\,ds_{2}\cdots ds_k\qquad \text{where} \\ \label{klhsdfgw-1}{}& \int_0^1 |F_t|^p\,dt<\infty, \qquad    F_t=\int_{-\infty}^t g^{(k)}(t-s)\,dL_s
%\end{align}
%and all  integrals are well-defined. 
\end{lem}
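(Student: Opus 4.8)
The goal is to show that, under the stated conditions, $X \in W^{k,p}$ almost surely and that its $k$th weak derivative is given by the moving average $\int_{-\infty}^t g^{(k)}(t-s)\,dL_s$. The natural strategy is to first make sense of the candidate derivative process $F_t := \int_{-\infty}^t g^{(k)}(t-s)\,dL_s$ — checking it is well-defined as a stochastic integral for $\lambda$-a.e.\ $t$ and lies in $L^p([0,1])$ almost surely — and then to verify the fundamental-theorem-of-calculus identity relating $X$ to $F$ by a Fubini-type argument. More precisely, I would show that for all $0 \le u \le t \le 1$,
\begin{equation}
\Delta^{k-1}_{\bullet} X\big|^{t}_{u} \;=\; \int_u^t F_r\,dr \quad \text{a.s.},
\end{equation}
where the left side is the appropriate $(k-1)$st-order difference/increment of $X$; iterating this $k$ times (or phrasing it directly in terms of the $k$-fold iterated integral) gives absolute continuity of $X$ through order $k$ with $X^{(k)} = F$.

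**Key steps, in order.** First, I would establish integrability of $F$: using Assumption~(A), the kernel $g^{(k)}(t-s)$ behaves like $K(t-s)^{\alpha-k}$ near $s=t$ and is controlled by a function in $L^\theta$ away from the diagonal (via the monotonicity and $L^\theta$-tail hypotheses on $g^{(k)}$). Combined with the bound \eqref{eq:74} on $\int(|yx|^2\wedge 1)\,\nu(dx)$, the condition $\alpha > k - 1/(p\vee\beta)$ is exactly what guarantees $\int_{-\infty}^t(|g^{(k)}(t-s)x|^2\wedge 1)\,\nu(dx)\,ds < \infty$, so $F_t$ exists for each fixed $t$; the exponent $p\vee\beta$ (rather than just $\beta$) appears because we need not merely finiteness but enough integrability to get the $L^p$-in-time bound $\E\int_0^1|F_r|^p\,dr < \infty$, which by Fubini reduces to $\sup_{t\in[0,1]}\E|F_t|^p < \infty$ — and for $p > \beta$ this $p$th moment bound requires the stronger tail/kernel integrability, with the borderline $p=\theta$ case needing the extra logarithmic moment from (A-log). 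The martingale/Fubini theorem for stochastic integrals with respect to infinitely divisible random measures (as in Rajput–Rosiński, already invoked elsewhere in the paper) then lets me interchange $\int_u^t(\cdot)\,dr$ with the stochastic integral $\int_{-\infty}^{\cdot}(\cdot)\,dL_s$: writing $g(t-s) - g(u-s) = \int_u^t g'(r-s)\,dr$ and iterating, one peels off one derivative at a time, each time justified by the stochastic Fubini theorem together with the integrability just established, until after $k$ steps the identity \eqref{sdhjlshjl} emerges. Finally, the pathwise statement $X \in W^{k,p}$ a.s.\ follows because $r\mapsto F_r$ has an $L^p([0,1])$ version (it is jointly measurable and $\int_0^1|F_r|^p\,dr < \infty$ a.s.), and a process whose increments are a.s.\ equal to $\int_u^t F_r\,dr$ is absolutely continuous with that derivative.

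**Main obstacle.** The technical heart is the stochastic Fubini argument: one must verify the integrability hypotheses that license interchanging the deterministic time integral with the stochastic integral against $L$, uniformly enough that the exceptional null set does not depend on $(u,t)$, and then upgrade the "for each fixed $t$" statements to an a.s.\ statement simultaneously in $t$ — this is where joint measurability of $(t,\omega)\mapsto F_t(\omega)$ and a careful application of Fubini (on $[0,1]\times\Omega$) are needed. The other delicate point is the sharpness of the exponent: tracking exactly why $k - 1/(p\vee\beta)$ and not $k-1/\beta$ is the right threshold, which forces the split into the cases $p \le \beta$, $p > \beta$, $p = \theta$ and the invocation of (A-log) precisely in the critical case. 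Everything else — the near-diagonal $(t-s)^{\alpha-k}$ estimate, the $L^\theta$ tail control, the binomial bookkeeping for the iterated differences — is routine given Lemma~\ref{helplem} and Assumption~(A).
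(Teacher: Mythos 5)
Your overall architecture (construct the candidate derivative $F_t=\int_{-\infty}^t g^{(k)}(t-s)\,dL_s$, prove $\int_0^1|F_r|^p\,dr<\infty$ a.s., then identify $X^{(k)}=F$ via a fundamental-theorem-of-calculus/stochastic-Fubini argument) is a reasonable blueprint, and it is morally what the machinery behind the lemma does internally. The paper itself, however, does none of this by hand: it reduces the statement (for $k=1$; the general case is said to follow similarly) to verifying the three conditions (5.3), (5.4), (5.6) of \cite[Theorem~5.1]{SamBra}, which in this setting become \eqref{sldfhogh}, \eqref{sdflhkhgs} and \eqref{khshjl}, and the entire proof consists of checking those three integral conditions from Assumption~(A); this is also precisely where $p\geq 1$, the case distinction $p>\theta$ versus $p<\theta$, and (A-log) at $p=\theta$ enter.

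The genuine gap in your version is the step reducing $\int_0^1|F_r|^p\,dr<\infty$ a.s.\ to $\sup_{t\in[0,1]}\E[|F_t|^p]<\infty$. Under Assumption~(A) the only control on the large jumps of $L$ is $\limsup_{t\to\infty}\nu(x\colon|x|\geq t)\,t^{\theta}<\infty$, which yields $\int_{|x|>1}|x|^r\,\nu(dx)<\infty$ only for $r<\theta$; in the regime $p\geq\theta$, which the lemma explicitly covers (e.g.\ $L$ symmetric $\beta$-stable, $\theta=\beta$, $p\geq\beta$), one has $\E[|F_t|^p]=\infty$, so your reduction proves nothing even though the conclusion $\int_0^1|F_r|^p\,dr<\infty$ a.s.\ remains true. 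The point is that the finitely many big jumps make the path integral a.s.\ finite without making it integrable, and capturing this requires exactly the truncated-moment conditions of \cite{SamBra} --- the inner integral $\int_{r/|g'(t+s)|}^{1/\|g'\|_{L^p([s,1+s])}}x^p\,\nu(dx)$ in \eqref{khshjl} together with the big-jump condition \eqref{sldfhogh} --- rather than a plain $p$th-moment bound. Relatedly, the stochastic Fubini interchange you invoke cannot be licensed by moment hypotheses in that regime either; one must split off the large jumps and argue pathwise, which is again what the cited theorem packages. Unless you are prepared to redo that analysis from scratch, the efficient route is the paper's: verify (5.3), (5.4) and (5.6) of \cite[Theorem~5.1]{SamBra}.
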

\begin{proof}
We will not need the assumption \eqref{kshs} on $g$ in the proof.  For notation simplicity  we only consider the case $k=1$, since the general case follows by similar arguments.   
To prove \eqref{sdhjlshjl} it is sufficient 
to show that the three conditions (5.3), (5.4) and (5.6) from \cite[Theorem~5.1]{SamBra} are satisfied
(this result uses the condition $p\geq 1$). In fact, the representation \eqref{sdhjlshjl} of $(\partial/\partial t)X_t$ 
follows by the equation below (5.10) in  \cite{SamBra}.
In our setting the  function $\dot \sigma$ defined in \cite[Eq.~(5.5)]{SamBra} is constant and hence  (5.3), (5.4) and (5.6) in \cite{SamBra}  simplifies to 
\begin{align}
{}& \label{sldfhogh} \int_{\R} \nu\Big(\Big(\frac{1}{\| g'\|_{L^p([s,1+s])}},\infty\Big)\Big)\,ds<\infty,\\ 
{}& \label{sdflhkhgs} \int_0^\infty \int_\R \Big(|x g'(s)|^2\wedge 1\Big)\,\nu(dx)\,ds<\infty, \\ 
\label{khshjl}
{}& \int_0^1 \int_\R |g'(t+s)|^p\Big(\int_{r/|g'(t+s)|}^{1/\|g'\| _{L^p([s,1+s])}} x^p \,\nu(dx)\Big)\,ds\,dt<\infty
\end{align}
for all $r>0$. When the lower bound in the inner integral in \eqref{khshjl} exceed the upper bound  the integral is set to zero.
Since $\alpha>1-1/\beta$ we may choose $\epsilon>0$ such that $(\alpha-1)(\beta+\epsilon)>-1$. 
 To show \eqref{sldfhogh} we use the estimates
\begin{align}
\| g'\|_{L^p([s,1+s])}\leq {}& K \Big(\1_{\{s\in [-1,1]\}}  +\1_{\{s>1\}} |g'(s)|\Big),\qquad s\in \R,  
\intertext{and}
\nu((u,\infty))\leq {}& 
\begin{cases} K u^{-\theta} & u\geq 1 \\ 
K u^{-\beta-\epsilon} & u\in (0,1],
\end{cases}
\end{align}
which both follows from assumption (A). Hence, we deduce  
\begin{align}
{}& \int_{\R} \nu\Big(\Big(\frac{1}{\| g'\|_{L^p([s,1+s])}},\infty\Big)\Big)\,ds
\\ {}& \qquad \leq 
 \int_{-1}^1  \nu\Big(\Big(\frac{1}{K},\infty\Big)\Big)\,ds
 +  \int_{1}^{\infty}  \nu\Big(\Big(\frac{1}{K | g'(s)|},\infty\Big)\Big)\,ds
 \\ {}& \qquad \leq 
 2 \nu\Big(\Big(\frac{1}{K},\infty\Big)\Big)
 + K\int_{1}^{\infty}  \Big(|g'(s)|^\theta\1_{\{K |g'(s)|\leq 1\}}+|g'(s)|^{\beta+\epsilon}\1_{\{K|g'(s)|> 1\}}\Big)\,ds<\infty
\end{align}
which shows \eqref{sldfhogh} (recall that $|g'|$ is decreasing on $(1,\infty)$). To show  \eqref{sdflhkhgs} we  will use   the following two  estimates: 
\begin{align}\label{hgsdghl}
 {}&   \int_0^1 \Big(|s^{\alpha-1}x|^2\wedge 1\Big)\,ds
\leq 
\begin{cases} K\Big(\1_{\{|x|\leq 1\}} |x|^{1/(1-\alpha)}+  \1_{\{|x|> 1\}} \Big) & \alpha<1/2 \\ 
K\Big(\1_{\{|x|\leq 1\}} x^2\log(1/x)+  \1_{\{|x|>1\}} \Big)  & \alpha = 1/2 \\   K\Big(\1_{\{|x|\leq 1\}}x^2 +  \1_{\{|x|> 1\}} \Big) & \alpha>1/2,
\end{cases}
\intertext{and} 
\label{howeerho}
{}&  \int_{\{|x|>1\}}  \Big(|x g'(s)|^2\wedge 1\Big)\,\nu(dx) 
 \leq	  K \int_{1}^\infty  \Big(|x g'(s)|^2\wedge 1\Big) x^{-1-\theta}\,dx
 \leq K |g'(s)|^\theta.\qquad
\end{align}
For $\alpha<1/2$ we have 
\begin{align}
{}& \int_0^\infty \int_\R \Big(|x g'(s)|^2\wedge 1\Big)\,\nu(dx)\,ds\\
{}& \quad 
\leq K\Big\{\int_\R  \int_0^1 \Big(|x s^{\alpha-1}|^2\wedge 1\Big)\,ds\,\nu(dx)+
\int_1^\infty \int_{\{|x|\leq 1\}}   \Big(|x g'(s)|^2\wedge 1\Big)\,\nu(dx)\,ds  \\  {}&\qquad \qquad +\int_1^\infty   \int_{\{|x|> 1\}}  \Big(|x g'(s)|^2\wedge 1\Big)\,\nu(dx)\,ds\Big\}\\ {}&\quad 
 \leq K\Big\{ \int_\R  \big( \1_{\{|x|\leq 1\}} |x|^{1/(1-\alpha)}+\1_{\{|x|>1\}}\big)\, \nu(dx)\\ {}& \qquad \qquad +\Big(\int_1^\infty |g'(s)|^2\,ds\Big)\Big(\int_{\{|x|\leq 1\}} x^2\,\nu(dx) \Big) +\int_1^\infty |g'(s)|^\theta\,ds\Big\}<\infty, 
\end{align}
where  the first  inequality follows by  assumption (A), 
the second inequality follows by \eqref{hgsdghl} and \eqref{howeerho},  and   the last inequality is due to the fact that    $1/(1-\alpha)>\beta$ and $g'\in L^\theta((1,\infty))\cap L^2((1,\infty))$. 
This shows \eqref{sdflhkhgs}. The two remaining  cases $\alpha=1/2$ and $\alpha>1/2$ follow similarly.

Now, we will prove that \eqref{khshjl} holds.
Since $|g'|$ is decreasing on $(1,\infty)$ we have for all $t\in [0,1]$ 
that 
 \begin{align}\label{sdfkgkhsfh}
{}& \int_1^\infty |g'(t+s)|^p\Big(\int_{r/|g'(t+s)|}^{1/\|g'\| _{L^p([s,1+s])}} x^p \,\nu(dx)\Big)\,ds
\\ {}& \qquad  \leq 
 \int_1^\infty |g'(s)|^p\Big(\int_{r/|g'(1+s)|}^{1/|g'(s)|} x^p \,\nu(dx)\Big)\,ds
 \\ \label{khshf}{}& \qquad  \leq \frac{K}{p-\theta}
 \int_1^\infty |g'(s)|^p\Big(|g'(s)|^{\theta-p}-|g'(s+1)/r|^{\theta-p}\Big)\1_{\{r/|g'(1+s)|\leq 1/|g'(s)|\}}\,ds. \qquad 
  \end{align}
For $p>\theta$, \eqref{khshf} is less than or equal to 
  \begin{equation}
 \frac{K}{p-\theta} \int_1^\infty |g'(s)|^\theta\,ds<\infty,
  \end{equation}
 and for $p<\theta$,
 \eqref{khshf} is less than or equal to 
 \begin{align}
\frac{K r^{p-\theta}}{\theta-p}
 \int_1^\infty |g'(s)|^p |g'(s+1)|^{\theta-p}\,ds\leq 
 \frac{K r^{p-\theta}}{\theta-p} \int_1^\infty |g'(s)|^\theta\,ds<\infty,
 \end{align}
 where the first inequality is due to the fact that $|g'|$ is decreasing on $(1,\infty)$. 
 Hence we have shown that 
 \begin{equation}\label{khsk}
 \int_0^1 \int_1^\infty |g'(t+s)|^p\Big(\int_{r/|g'(t+s)|}^{1/\|g'\| _{L^p([s,1+s])}} x^p \,\nu(dx)\Big)\,ds\,dt<\infty
 \end{equation}
 for $p\neq \theta$. 
Suppose that $p>\beta$. For $t\in [0,1]$ and $s\in [-1,1]$ we have 
 \begin{align}\label{klhkl}
 {}& \int_{r/|g'(t+s)|}^{1/\|g'\| _{L^p([s,1+s])}} x^p \,\nu(dx)\leq  \int_{1}^{1/\|g'\| _{L^p([s,1+s])}} x^p \,\nu(dx)+ \int_{r/|g'(t+s)|}^{1} x^p \,\nu(dx)\qquad \\
 {}& \qquad \leq K\Big( \|g'\| _{L^p([s,1+s])}^{\theta-p}
+1 \Big)\label{klhkl-1}
 \end{align}
 and hence 
 \begin{align}\label{eq93e4u7}
  {}&  \int_0^1 \int_{-1}^1 |g'(t+s)|^p\Big(\int_{r/|g'(t+s)|}^{1/\|g'\| _{L^p([s,1+s])}} x^p \,\nu(dx)\Big)\,ds\,dt\\ \label{eq93e4u7-1}{}&\qquad  
   \leq K\Big(\int_{-1}^1 \|g'\|_{L^p([s,s+1])}^{\theta} \,ds+ \int_{-1}^1
    \|g'\|_{L^p([s,1+s])}^p\,ds\Big)<\infty.
 \end{align}
Suppose that  $p\leq \beta$. For  $t\in [0,1]$ and $s\in [-1,1]$ we have 
 \begin{align}
  \int_{r/|g'(t+s)|}^{1/\|g'\| _{L^p([s,1+s])}} x^p \,\nu(dx)\leq K\Big( \|g'\| _{L^p([s,1+s])}^{\theta-p}
+|g'(t+s)|^{\beta+\epsilon-p} \Big)
 \end{align}
 and hence 
 \begin{align}\label{skdfh}
{}&  \int_0^1 \int_{-1}^1 |g'(t+s)|^p\Big(\int_{r/|g'(t+s)|}^{1/\|g'\| _{L^p([s,1+s])}} x^p \,\nu(dx)\Big)\,ds\,dt\\ \label{skdfh-1}{}&\qquad  
   \leq K\Big(\int_{-1}^1 \|g'\|_{L^p([s,s+1])}^{\theta} \,ds+ \int_{-1}^1
    \|g'\|_{L^{\beta+\epsilon}([s,1+s])}^{\beta+\epsilon}\,ds\Big)<\infty
 \end{align}
 since $(\alpha-1)(\beta+\epsilon)>-1$. Thus,  \eqref{khshjl} follows by \eqref{khsk}, \eqref{eq93e4u7}--\eqref{eq93e4u7-1} and \eqref{skdfh}--\eqref{skdfh-1}.  
 
For  $p=\theta$ the above proof  goes through  except for \eqref{khsk}, where we need the additional assumption (A-log).  This completes the proof. 
 \end{proof}

\begin{lem}\label{lem-f-65}
For all $\zeta\in W^{k,p}$ we have  as $n\to \infty$, 
\begin{equation}\label{con-g-1}
n^{-1+pk}V(\zeta,p;k)_n\to \int_0^1 |\zeta^{(k)}(s)|^p\,ds.
\end{equation}
\end{lem}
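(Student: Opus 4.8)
The plan is to reduce \eqref{con-g-1} to a statement about the $L^p$-norm of $\zeta^{(k)}$, via the classical representation of a $k$th order difference as an iterated integral of $\zeta^{(k)}$. Writing $|u|_1=u_1+\dots+u_k$ for $u=(u_1,\dots,u_k)\in[0,1]^k$, a straightforward induction on $k$ based on the identity $\Delta^n_{i,k}\zeta=\Delta^n_{i,k-1}\zeta-\Delta^n_{i-1,k-1}\zeta$ together with the fundamental theorem of calculus applied to the absolutely continuous function $\zeta^{(k-1)}$ gives
\[
\Delta^n_{i,k}\zeta=\frac1{n^{k}}\int_{[0,1]^{k}}\zeta^{(k)}\!\Big(\frac{i-k+|u|_1}{n}\Big)\,du,\qquad i=k,\dots,n .
\]
Setting $\phi:=\zeta^{(k)}\in L^p([0,1])$ and noting that $\frac{i-k+|u|_1}{n}\in[\frac{i-k}{n},\frac in]\subseteq[0,1]$ for $k\le i\le n$, the claim \eqref{con-g-1} takes the form
\[
A_n(\phi):=\frac1n\sum_{i=k}^n\Big|\int_{[0,1]^{k}}\phi\Big(\frac{i-k+|u|_1}{n}\Big)\,du\Big|^p\ \longrightarrow\ \int_0^1|\phi(s)|^p\,ds=\|\phi\|_{L^p([0,1])}^p .
\]

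I would establish this first for $\phi\in C([0,1])$ and then pass to general $\phi\in L^p([0,1])$ by density. For continuous $\phi$, uniform continuity yields $\sup_{k\le i\le n}\big|n^{k}\Delta^n_{i,k}\zeta-\phi(\tfrac{i-k}{n})\big|\le\omega_\phi(k/n)\to0$, where $\omega_\phi$ is the modulus of continuity; since all these numbers are bounded by $\|\phi\|_\infty$ and $x\mapsto|x|^p$ is uniformly continuous on bounded sets, $A_n(\phi)$ has the same limit as the Riemann sum $\frac1n\sum_{j=0}^{n-k}|\phi(j/n)|^p$, namely $\int_0^1|\phi|^p$. For the extension, the crucial observation is that $\phi\mapsto A_n(\phi)^{1/p}$ is a seminorm on $L^p([0,1])$ — by Minkowski's inequality for the $\ell^p$-sum combined with the triangle inequality under the integral sign — which is dominated by the $L^p$-norm \emph{uniformly in $n$}, i.e.\ $A_n(\phi)^{1/p}\le K\|\phi\|_{L^p([0,1])}$ for all $n\ge k$. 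Granting this, given $\varepsilon>0$ one picks $\psi\in C([0,1])$ with $\|\phi-\psi\|_{L^p}<\varepsilon$; then $|A_n(\phi)^{1/p}-A_n(\psi)^{1/p}|\le A_n(\phi-\psi)^{1/p}\le K\varepsilon$ and $\big|\,\|\phi\|_{L^p}-\|\psi\|_{L^p}\,\big|\le\varepsilon$, so a three-$\varepsilon$ argument using the already proved convergence for $\psi$ gives $A_n(\phi)^{1/p}\to\|\phi\|_{L^p}$, hence $A_n(\phi)\to\|\phi\|_{L^p}^p$, which is \eqref{con-g-1}.

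The one computation of substance is the uniform bound. Since $p\ge1$, Jensen's inequality on the probability space $[0,1]^{k}$ gives $\big|\int_{[0,1]^{k}}\phi(\tfrac{i-k+|u|_1}{n})\,du\big|^p\le\int_{[0,1]^{k}}\big|\phi(\tfrac{i-k+|u|_1}{n})\big|^p\,du$; integrating first in $u_1$, over an interval of length one, bounds this by $n\int_{(i-k)/n}^{i/n}|\phi(v)|^p\,dv$; summing over $i$ and using that each $v\in[0,1]$ belongs to at most $k$ of the intervals $[(i-k)/n,i/n]$ gives $A_n(\phi)\le k\int_0^1|\phi|^p$, so $K=k^{1/p}$ works. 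I expect this covering/Fubini step — and getting the elementary difference representation exactly right — to be the only mildly delicate points; the remainder is soft.
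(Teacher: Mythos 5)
Your proof is correct, and it follows the same two--stage architecture as the paper (establish the convergence for regular $\zeta$ via a Riemann--sum argument, then pass to general $\zeta\in W^{k,p}$ by density plus Minkowski), but the two stages are executed differently and your version is the more self--contained one. The paper works with $\zeta\in C^{k+1}$ and uses a $k$--fold Taylor expansion to get $\Delta^n_{i,k}\zeta=\zeta^{(k)}\big(\tfrac{i-k}{n}\big)n^{-k}+O(n^{-k-1})$, whereas you use the exact identity $\Delta^n_{i,k}\zeta=n^{-k}\int_{[0,1]^k}\zeta^{(k)}\big(\tfrac{i-k+|u|_1}{n}\big)\,du$, valid for every $\zeta\in W^{k,p}$; this buys you two things at once: the continuous case follows from uniform continuity with no remainder bookkeeping, and --- more importantly --- the Jensen/Fubini/covering computation gives the uniform bound $A_n(\phi)^{1/p}\leq K\|\phi\|_{L^p([0,1])}$ for all $n$, which is exactly the estimate needed to make the density step work. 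The paper's proof disposes of that step in one line (``by approximating $\zeta$ through a sequence of $C^{k+1}$-functions and Minkowski's inequality''), implicitly relying on precisely such a uniform domination of the seminorms $A_n(\cdot)^{1/p}$ by the $L^p$-norm; you have supplied that missing detail explicitly. One trivial correction: a point $v\in[0,1]$ can lie in up to $k+1$ (not $k$) of the intervals $[(i-k)/n,i/n]$, so the constant is $K=(k+1)^{1/p}$, which of course changes nothing.
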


\begin{proof}
First we will assume that $\zeta\in C^{k+1}(\R)$ and afterwards we will prove the lemma by approximation. Successive applications of Taylor's theorem gives
\begin{equation}\label{est-g-sf}
\Delta^n_{i,k} \zeta= \zeta^{(k)}\Big(\frac{i-k}{n}\Big) \frac{1}{n^k} +a_{i,n}, \qquad n\in \N, \ k\leq i\leq n
%\Delta^k g(t)
\end{equation}
where $a_{i,n}\in \R$ satisfies 
\begin{equation}
|a_{i,n}| \leq K n^{-k-1}, \qquad n\in \N, \ k\leq i\leq n. 
\end{equation}
By Minkowski's inequality, 
\begin{align}
 {}& \Big|\Big(n^{kp-1} V(\zeta)_n\Big)^{1/p} - \Big(n^{kp-1}
  \sum_{j=k}^n \Big|\zeta^{(k)}\Big(\frac{i-k}{n}\Big) \frac{1}{n^k}\Big|^p\Big)^{1/p}\Big|\\ {}& \qquad  \leq 
\Big(n^{pk-1} \sum_{j=k}^n |a_{i,n}|^p\Big)^{1/p}
\leq K n^{-1-1/p}\to 0. 
\end{align}
By continuity of $\zeta^{(k)}$ we have 
\begin{equation}
n^{kp-1} \sum_{i=k}^n \Big|\zeta^{(k)}\Big(\frac{i-k}{n}\Big) \frac{1}{n^k}\Big|^p \to \int_0^1 |\zeta^{(k)}(s)|^p\,ds
\end{equation}
as $n\to\infty$,  which   shows  \eqref{con-g-1}.

The statement of the lemma  for a general  $\zeta \in W^{k,p}$ follows  by approximating $\zeta$ through a sequence of $C^{k+1}(\R)$-functions 
and Minkowski's inequality.  This completes the proof. 
\end{proof}

\begin{proof}[Proof of Theorem~\ref{maintheorem}(iii)]
The Lemmas~\ref{abs-cont-sdf} and \ref{lem-f-65} yield Theorem~\ref{maintheorem}(iii). 
\end{proof}

\section{Proof of Theorem \ref{sec-order}} \label{sec5}
\setcounter{equation}{0}
\renewcommand{\theequation}{\thesection.\arabic{equation}}

We recall the definition of $m_p$ in Theorem \ref{maintheorem}(ii) and 
 of $g_{i,n}$ introduced  at \eqref{def-g-i-n}.
Throughout this section the driving L\'evy motion $L$
is a symmetric $\beta$-stable L\'evy process with scale parameter $1$.

\subsection{Proof of Theorem~\ref{sec-order}(i)}  \label{sec5.1}
Throughout the following subsection we assume  that the conditions  of 
Theorem~\ref{sec-order}(i) hold,  in particular,  $k=1$. For all $n\geq 1$ and $r\geq 0$ set
\begin{align}\label{eq:23412343}
\phi_{r}^n(s)={}& Dg_n(r-s) = n^{\alpha} \Big( g\big(\frac{r-s}{n}\big)-g\big(\frac{r-1-s}{n}\big)\Big),\\ 
% ,
 Y_{r}^n= {}& 
 \int_{-\infty}^r \phi_{r}^n(s)\,dL_s,\qquad 
V^n_r= | Y_r^n |^p - \E[ | Y^n_r |^p].
%, \qquad  
%m_p^n=  \E[| Y^n_1 |^p].%, \qquad F_n(x)= |x|^p-m_p^n.
\end{align}
Due to self-similarity of $L$ of order $1/\beta$ we have for all $n\in \N$ that 
\begin{align} \label{statdec}
n^{1-\frac{1}{(1-\alpha)\beta}}\Big(n^{-1+p(\alpha + 1/\beta)}V(p;1)_n- m_p\Big) \eqschw S_n +  r_n
\end{align}
where 
\begin{equation}\label{def-S-n-23}
 S_n=  n^{1/(\alpha-1)\beta}\sum_{r=1}^n V^n_r \qquad \text{and}\qquad r_n =  n^{1-\frac{1}{(1-\alpha)\beta}} ( \E[| Y^n_1 |^p] -m_p).
\end{equation}
We will show Theorem~\ref{sec-order}(i) by showing that $r_n \to 0$ and  $S_n \schw S$, where $S$ is the limit introduced in Theorem \ref{sec-order}(i). The convergence   $S_n \schw S$  follows by the following Steps~1--3, whereas $r_n\to 0$ follows by    Step~4.

The following estimates will be useful for us. Let $W$ denote  a symmetric $\beta$-stable random variable with scale parameter $\rho\in (0,\infty)$ and set 
\begin{equation}\label{def-H-rho}
\Phi_\rho(x)= \E[| W+x|^p]- \E[ | W |^p],\qquad  x\in \R. 
\end{equation}
Using the representation \eqref{xp} it follows that 
\begin{equation}\label{rep-H-est-2}
\Phi_\rho(x)= a_p^{-1} \int_\R \big(1-\cos(ux) \big) e^{-\rho^\beta | u |^\beta}|u|^{-1-p}\,du. 
\end{equation}
Let $\epsilon>0$ be a fixed strictly positive number. 
From \eqref{rep-H-est-2}, we deduce that $H_\rho$ is two times continuous   differentiable, and  
\begin{align}\label{est-H''}
| \Phi_\rho''(x) |= a_p^{-1}\Big| \int_\R \cos(ux) |u|^{1-p}e^{-\rho^\beta | u |^\beta}\,du\Big| 
 \leq a_p^{-1} \int_\R  |u|^{1-p}e^{-\rho^\beta | u |^\beta}\,du,  
\end{align}
which implies that there exists a finite constant $K_\epsilon$ such that for all $\rho\geq \epsilon$ and all $x\in \R$ 
\begin{equation}\label{est-H-28217}
 |\Phi_\rho''(x)|\leq K_\epsilon.
\end{equation}
By  \eqref{rep-H-est-2} we also deduce the following estimate by several applications of the mean-value theorem 
\begin{equation}\label{est-H-inf-2}
|\Phi_\rho(x)-\Phi_\rho(y)| \leq K_\epsilon \Big(   \big(|x|\wedge 1+|y|\wedge 1\big)|x-y| \1_{\{|x-y|\leq 1\}} + |x-y|^p\1_{\{|x-y|>1\}}\Big) 
\end{equation} 
which holds for all $\rho\geq \epsilon$ and all $x,y\in\R$. Eq.~\eqref{est-H-inf-2} used on $y=0$ yields that 
\begin{align} \label{hestimate}
|\Phi_\rho (x)|\leq K_\epsilon (|x|^{p}\wedge |x|^{2} ),
\end{align}
which, in particular,  implies that 
\begin{equation}\label{est-H-beta}
 |  \Phi_\rho(x)|\leq K_\epsilon |x|^l\qquad \text{for all }l\in (p,\beta).
\end{equation}
Moreover, for all $r\in (p,2]$ and   $\rho_1, \rho_2\geq \epsilon$  we  deduce by
 \eqref{rep-H-est-2}  that 
\begin{equation}\label{est-H-rho-1-2}
 | \Phi_{\rho_1}(x)- \Phi_{\rho_2}(x) |\leq K_\epsilon | \rho_1^\beta - \rho_2^\beta | |x|^{r} \qquad \text{for all }x\in \R. 
\end{equation} 
We will also need the following estimate:
\begin{lem}\label{est-H-inf}
For all   $\kappa,\tau\in L^\beta([0,1])$ with   $\| \kappa \|_{L^\beta([0,1])} , \| \tau \|_{L^\beta([0,1])}\leq 1$  
set $U=\int_0^1 \kappa(s)\,dL_s$ and $V=\int_0^1 \tau(s)\,dL_s$. Moreover, let   $\Phi_\rho$ be given by \eqref{def-H-rho}.   For all  $r\in (1,\beta)$ and  $\epsilon>0$ we  have 
\begin{align}\label{est-we-want-3}
{}& \| \Phi_\rho(U)-\Phi_\rho(V) \|_{L^r}\\ {}& \qquad \leq  K_\rho\Big\{ \Big(  \| \kappa\|_{L^\beta([0,1])}^{\beta/r-1-\epsilon}+\| \tau  \|_{L^\beta([0,1])}^{\beta/r-1-\epsilon}  \Big)\| \kappa-\tau \|_{L^\beta([0,1])}       +   \| \kappa-\tau \|_{L^\beta([0,1])} ^{\beta/r}\Big\}.
\end{align}
\end{lem}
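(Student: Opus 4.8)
The plan is to prove Lemma~\ref{est-H-inf} by interpolating between two regimes: one where $U$ and $V$ are close (so we use the Lipschitz-type bound \eqref{est-H-inf-2} on $\Phi_\rho$ together with moment estimates for stable integrals), and one where they are far apart (so we use the sublinear bound \eqref{hestimate}). First I would record the elementary fact that for a symmetric $\beta$-stable random variable $\int_0^1\phi(s)\,dL_s$ and any $q\in(0,\beta)$ one has $\E[|\int_0^1\phi(s)\,dL_s|^q]=c_{q,\beta}\|\phi\|_{L^\beta([0,1])}^q$ (see \cite{SamTaq}); this is the workhorse for converting $L^r$-norms of stable integrals into $L^\beta$-norms of their kernels.

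The main decomposition is as follows. Write $U-V=\int_0^1(\kappa-\tau)(s)\,dL_s$ and set $\delta:=\|\kappa-\tau\|_{L^\beta([0,1])}$. Apply \eqref{est-H-inf-2} with $x=U$, $y=V$ to get
\begin{align}
|\Phi_\rho(U)-\Phi_\rho(V)| \leq K_\rho\Big( (|U|\wedge 1+|V|\wedge 1)\,|U-V|\,\1_{\{|U-V|\leq 1\}} + |U-V|^p\,\1_{\{|U-V|>1\}}\Big).\nonumber
\end{align}
Then I would take $L^r$-norms and bound the two terms separately. For the second term, $\big\||U-V|^p\1_{\{|U-V|>1\}}\big\|_{L^r}\leq \big\||U-V|^{p}\big\|_{L^{r}}^{\text{(with a tail gain)}}$: more precisely, since $|U-V|^{p}\1_{\{|U-V|>1\}}\leq |U-V|^{\beta/r}$ whenever $pr\le\beta$ — which holds because $r<\beta$ and we may as well assume $p\le 1$ in this application — one gets $\leq (\E[|U-V|^{\beta}])^{1/r}\cdot(\text{const})\le K\,\delta^{\beta/r}$, the second term on the right of \eqref{est-we-want-3}. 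For the first term I would use Hölder's inequality with exponents chosen so that the factor $(|U|\wedge1+|V|\wedge1)$ is measured in a high $L^a$-norm and $|U-V|$ in $L^b$ with $1/a+1/b=1/r$; since $|U|\wedge 1\le |U|^{s}$ for any $s\le 1$, picking $s$ slightly below $1$ lets us take $a$ as large as we like, and $\E[|U|^{sa}]^{1/a}\le K\|\kappa\|_{L^\beta}^{s}$ as long as $sa<\beta$. Matching exponents and using $\E[|U-V|^{r'}]^{1/r'}\le K\delta$ for $r'<\beta$ produces a bound of the form $K_\rho(\|\kappa\|_{L^\beta}^{\beta/r-1-\epsilon}+\|\tau\|_{L^\beta}^{\beta/r-1-\epsilon})\,\delta$, which is exactly the first term on the right of \eqref{est-we-want-3}. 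The $\epsilon$-loss is the price for keeping all the exponents strictly below the critical value $\beta$ (where stable moments blow up).

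\textbf{The main obstacle} will be the careful bookkeeping of exponents in the Hölder step: one must simultaneously keep the moment order of $U$ (and of $V$) strictly below $\beta$, keep the moment order of $U-V$ strictly below $\beta$, and still land on the precise power $\beta/r-1-\epsilon$ of $\|\kappa\|_{L^\beta([0,1])}$ claimed in the statement. Because $r\in(1,\beta)$, the quantity $\beta/r-1$ lies in $(0,\beta-1)$, so there is genuine room, but the inequalities $sa<\beta$ and $b r'<\beta\cdot(\text{something})$ have to be threaded precisely; this is where the freedom to shrink $s$ below $1$ and to absorb an arbitrarily small $\epsilon$ is essential. Everything else — the moment formula for stable integrals, the two pointwise bounds \eqref{est-H-inf-2} and \eqref{hestimate}, and the triangle inequality for $L^\beta$-norms of kernels — is standard and feeds in mechanically.
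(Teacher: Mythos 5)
Your treatment of the first term is essentially the paper's argument: a H\"older split between the factor $|U|\wedge 1+|V|\wedge 1$, measured in a high $L^{q'}$-norm and bounded via $(|U|\wedge1)^{q'}\le |U|^{q}$ with $q<\beta$, and the factor $|U-V|$, measured in $L^{q}$; pushing $q$ up towards $\beta$ yields the exponent $\beta/r-1-\epsilon$. That part of your bookkeeping goes through.

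The gap is in the second term. You dominate $|U-V|^{p}\1_{\{|U-V|>1\}}$ by $|U-V|^{\beta/r}$ and then take the $L^{r}$-norm, which requires $\E[|U-V|^{\beta}]^{1/r}$. But $U-V$ is a symmetric $\beta$-stable random variable, and its absolute moment of order exactly $\beta$ is infinite; the moment identity you invoke at the outset holds only for orders strictly below $\beta$, as you yourself state. Lowering the exponent to any admissible $q<\beta/r$ only produces $\|\kappa-\tau\|_{L^\beta([0,1])}^{q}$ with $q$ strictly smaller than $\beta/r$, which is weaker than the claimed bound. The idea you are missing is that the truncation $\1_{\{|U-V|>1\}}$ must be exploited rather than discarded: writing $U-V$ as $b\,W$ in law with $W$ standard symmetric $\beta$-stable and $b=\|\kappa-\tau\|_{L^\beta([0,1])}$, and using the density bound $\eta(x)\le K(1+|x|)^{-1-\beta}$ from \cite{Wat}, one obtains $\E[|bW|^{v}\1_{\{|bW|\ge 1\}}]\le K b^{\beta}$ for every $v<\beta$ and $b\le 1$, because the truncated tail integral $\int_{1/b}^{\infty}x^{v}x^{-1-\beta}\,dx$ supplies exactly the missing power $b^{\beta-v}$. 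Applying this with $v=pr$ gives $\||U-V|^{p}\1_{\{|U-V|\ge1\}}\|_{L^{r}}\le K\|\kappa-\tau\|_{L^\beta([0,1])}^{\beta/r}$; this is the paper's estimate \eqref{eq:836} and is the step your proposal needs to be repaired with.
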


\begin{proof}
Let $q\in (r,\beta)$.
Using \eqref{est-H-inf-2}, Minkowski inequality and H\"older inequality on $q$ and $q':=rq/(q-r)$, which satisfies the equality $1/r=1/q+1/q'$, we obtain  
\begin{align}\label{eq:7238}
{}& \| \Phi_\rho(U)- \Phi_\rho(V)\|_{L^r} \\ \label{eq:7238-2}  \leq {}&    K_{\rho} \Big( \Big\{\| |U |\wedge 1\|_{L^{q'}} + \| |V|\wedge 1 ||_{L^{q'}} \Big\}\| U-V \|_{L^q} 
+ \| | U-V |^p  \1_{\{|  U-V |\geq 1\}}\|_{L^r}\Big) .
%\leq {}& K\Big( \Big\{\| f\|_{L^\beta([0,1])}^{(q-r)/r}+\| h  \|_{L^\beta([0,1])}^{(q-r)/r}\Big\} \| f-h\|_{L^\beta([0,1])}   + \| f-h\|_{L^\beta([0,1])}^{\beta/r}\Big)
\end{align}
Since $q-r<1<r$ we have  $q'>q$, and hence 
\begin{equation}
 \| |V|\wedge 1 \|_{L^{q'}} \leq \E[ |V|^{q}]^{\frac{q-r}{q r}}\leq K  \| \tau \|_{L^\beta([0,1])}^{q/r-1} .
\end{equation}
A similar estimate holds for $U$. Let $W$ be a standard symmetric $\beta$-stable random variable. For all $b\in (0,1]$ and  all $v<\beta$ we have  the estimate 
\begin{equation}\label{eq:836}
 \E[ | b W |^v \1_{\{| b   W |\geq 1\}}]\leq K b^{\beta},
\end{equation}
which follows from the fact that $W$ has a density $\eta$ satisfying       
$\eta(x)\leq	 K(1+|x|)^{-1-\beta}$, $x\in \R$, see 
 e.g.\ \cite[Theorem~1.1]{Wat}, and hence by substitution 
\begin{align}
 \E[ | b W |^v\1_{\{| b W |\geq 1\}}] = {}&\int_\R |b x|^v \1_{\{ |bx|\geq 1\}}\,\eta(x)\,dx
\\ \leq {}& K b^{-1} \int_\R |x|^v \1_{\{|x|\geq 1\}} |b^{-1} x|^{-1-\beta}\,dx = K b^\beta. 
\end{align}
By applying  \eqref{eq:836} with $v= p r$ on the last term in \eqref{eq:7238-2} we obtain the desired estimate \eqref{est-we-want-3}. 
\end{proof}

Recall the definition of  $\phi^n_j$ in \eqref{eq:23412343}, and set  
\begin{equation}\label{def-rho}
 \phi_j(u)=(j-u)^\alpha_+- (j-u-1)^\alpha_+.
\end{equation}
From Lemma \ref{helplem} we obtain the  estimates 
\begin{equation}\label{eq:72138}
\| \phi^n_j\|_{L^\beta([0,1])}\leq K j^{\alpha-1},\qquad \| \phi_j \|_{L^\beta([0,1])}\leq  K j^{\alpha-1},
\end{equation}
which will be used 
repeatedly throughout the proof. 
Set, moreover, 
\begin{align}
\rho^n_j ={}&  \| \phi^n_1 \|_{L^\beta(\R\setminus [-j,-j+1])},  \qquad  \rho_0=\| \phi_1 \|_{L^\beta(\R)},\qquad 
U_{r,m}^n=  \int_{m}^{m+1} \phi_{r}^n(u)\,dL_u,
\end{align}
and for all $r\in \R$ set  
\begin{equation}
  \g_r=\sigma(L_s-L_u: s,u\leq r)\qquad\text{and}\qquad  \g_{r}^1=\sigma(L_s-L_u: r\leq s,u\leq r+1 ).
\end{equation}
We note  that $(\g^1_r)_{r\geq 0}$ is not a filtration. 
We are now ready to prove the first approximation of $(S_n)_{n\geq 1}$ in \eqref{def-S-n-23}.

\begin{proof}[Step~1]  Set 
\begin{equation}\label{eq:def-Z} 
S^{\prime}_n:=  n^{1/(\alpha-1)\beta} \sum_{r=1}^n Z_{r}^n, \qquad Z_{r}^n:= \sum_{j=1}^{\infty} \E[V^n_r |\g_{r-j}^1].
\end{equation}
 In the following we will show that 
\begin{equation}\label{eq:346}
 S_n - S^{\prime}_n \toop 0.
\end{equation}
Before showing \eqref{eq:346} let us show that the infinite series $Z^n_r$, defined in \eqref{eq:def-Z},  converges absolutely a.s. To this end fix $n$ and $r$ and  set $\theta_j :=  \E[V^n_r |\g_{r-j}^1]$, $j\geq 1$.  Notice  that  $\theta_j  = \Phi_{\rho^n_j} (U^n_{r,r-j}) - \E[ \Phi_{\rho^n_j} (U^n_{r,r-j})]$. Since  $\rho^n_j\to  \| \phi^n_1 \|_{L^\beta(\R)}$ 
as $j\to \infty$, we have for all $n\geq 1$ large enough that there exists  $j_0=j_0(n)$ such that  $\{\rho_{j}^n:j\geq j_0\}$  is  bounded away from zero, which combined with  \eqref{est-H-beta} and \eqref{eq:72138} implies that    for all $l\in (p,\beta)$
 \begin{equation}\label{eq:78723}
\E[ | \theta_j | ] \leq 2 \E[ | \Phi_{\rho^n_j}(U^n_{r,r-j}) | ] \leq K \E[ | U^n_{r,r-j} |^l ] \leq K j^{(\alpha-1) l }.
\end{equation}
Since the  right-hand side of \eqref{eq:78723} is summable in $j$ for all $l$ close enough to $\beta$ by the assumption $\alpha<1-1/\beta$, it follows that $\sum_{j=1}^\infty  \theta_j$ converge  absolutely a.s.\  

In the following we will show  \eqref{eq:346} by showing that $ V^n_r $ is suitably  close to $Z^n_r$, and to this aim  we will use the  following telescoping sum decomposition of $V^n_r$
\begin{align}\label{decom-H_n}
 V^n_r  = \sum_{j=1}^{\infty} \Big(\E[ V^n_r |\g_{r-j+1}] - \E[ V^n_r |\g_{r-j}]\Big),
\end{align}
which follows from the fact that  $\lim_{j\to \infty} \E[ V^n_r |\g_{r-j}]=\E[ V^n_r ]=0$ a.s. 
Such decompositions  go  back to \cite{hh97} in the discrete time moving average setting. 
By definition of $S_n'$ and \eqref{decom-H_n} we have 
\begin{align}\label{eq-S-S'-1}
 S_n - S^{\prime}_n ={}&  n^{1/(\alpha -1)\beta} \sum_{r=1}^{n} R_r^n,\qquad \qquad R^n_r:=\sum_{j=1}^\infty \zeta_{r,j}^n, \\ \label{eq-S-S'-2}
 \zeta_{r,j}^n:= {}& \E[ V^n_r |\g_{r-j+1}] - \E[ V^n_r |\g_{r-j}] - \E[ V^n_r | \g_{r-j}^1].
\end{align}
%with 
%\begin{align*}
%\zeta_{r,j}^n &:= \E[H_n(Y_r^n)|\g_{r-j+1}] - \E[H_n(Y_r^n)|\g_{r-j}].
%% \\
%%&- \E[H(Y_r)|L_s,~s\in [r-j,r-j+1]]. 
%\end{align*}
We now use a  telescoping sum decomposition of $\zeta^n_{r,j}$ 
\begin{equation}\label{def-theta-23}
 \zeta_{r,j}^n = \sum_{l=j}^\infty \vartheta_{r,j,l}^n ,\qquad \quad \vartheta_{r,j,l}^n := \E[\zeta_{r,j}^n | \g_{r-j}^1\vee \g_{r-l}] 
 - \E[\zeta_{r,j}^n  | \g_{r-j}^1\vee \g_{r-l-1}],
\end{equation}
which follows from the fact that  $\lim_{l\to \infty} \E[ \zeta^n_{r,j}|\g^1_{r-j}\vee \g_{r-l}]=\E[ \zeta^n_{r,j}|\g^1_{r-j}]=0$ a.s. The next lemma gives a moment estimate for $\vartheta^n_{r,j,l}$.  

\begin{lem}\label{lem-est-theta}
Let  $\vartheta^n_{r,j,l}$ be defined in \eqref{def-theta-23} and $\gamma\in [1,\beta)$. Then there exists  $N\geq 1$ such that  $\E[| \vartheta_{r,j,l}^n|^\gamma]\leq K_\gamma j^{(\alpha-1)\gamma}   l^{(\alpha-1)\gamma}$ for all $n\geq N, r\leq n, j\geq 1$ and  $l\geq j$. 
\end{lem}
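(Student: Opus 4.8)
The plan is to estimate $\vartheta^n_{r,j,l}$ by writing it explicitly as a difference of conditional expectations of $\Phi_{\rho^n_j}(U^n_{r,r-j})$ (recall $V^n_r=\Phi_{\rho^n_j}(U^n_{r,r-j})+\ (\text{a }\g^1_{r-j}\text{-measurable zero-mean part})$, so $\zeta^n_{r,j}$ in fact only depends on $\Phi_{\rho^n_j}(U^n_{r,r-j})$ up to conditioning, and the increments in $l$ act on the "head" part $Y^n_r-U^n_{r,r-j}$). First I would rewrite $Y^n_r = U^n_{r,r-j} + W^n_{r,j}$ where $W^n_{r,j}=\int_{(-\infty,r]\setminus[r-j,r-j+1]}\phi^n_r(u)\,dL_u$, and note that conditionally on $\g^1_{r-j}$ the variable $U^n_{r,r-j}$ is frozen while $W^n_{r,j}$ is an independent symmetric $\beta$-stable variable with scale $\rho^n_j$. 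Conditioning further on $\g_{r-l}$ versus $\g_{r-l-1}$ only changes the contribution of the single block $\int_{r-l-1}^{r-l}\phi^n_r(u)\,dL_u$, whose scale is $\|\phi^n_r\|_{L^\beta([l-1,l])}\le K l^{\alpha-1}$ by \eqref{eq:72138}-type bounds from Lemma~\ref{helplem}. So $\vartheta^n_{r,j,l}$ is a martingale-difference increment, in the $l$-direction, of $\Phi_{\rho^n_j}(U^n_{r,r-j}+\cdot)$ evaluated at a point that is perturbed by that one $l$-block.

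The key analytic input is the Lipschitz-type estimate \eqref{est-H-inf-2} for $\Phi_\rho$ together with Lemma~\ref{est-H-inf}: applied with the two kernels being the pieces of $\phi^n_r$ on $[0,1]\setminus\{$block $j\}$ with and without the $l$-block, whose $L^\beta$-difference is $O(l^{\alpha-1})$ and whose total $L^\beta$-norm is $O(j^{\alpha-1})$ (the dominant small-argument scale is the block nearest to $r$, i.e.\ block $j$, giving the $j^{\alpha-1}$ factor; this is where $k=1$ and the bound $|g'(t)|\le Kt^{\alpha-1}$ enter). Thus, taking $r\mapsto\gamma$ and the kernels normalized by the $j^{\alpha-1}$ scale in Lemma~\ref{est-H-inf}, I would get $\|\vartheta^n_{r,j,l}\|_{L^\gamma}\le K_\gamma\,(j^{\alpha-1})\cdot(l^{\alpha-1})$ after accounting for the homogeneity: rescaling the kernels to unit $L^\beta$-norm pulls out a factor $j^{(\alpha-1)}$ from the argument of $\Phi$, and then the difference estimate contributes $(l^{\alpha-1}/j^{\alpha-1})^{\cdot}$-type terms which, combined, give exactly $j^{(\alpha-1)\gamma}l^{(\alpha-1)\gamma}$ in $L^\gamma$. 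One must check that $\rho^n_j$ stays bounded away from $0$ for $n\ge N$ and all $j$ (done as in \eqref{eq:78723}, since $\rho^n_j\ge\rho^n_\infty=\|\phi^n_1\|_{L^\beta(\R)}\to\rho_0>0$), so that the constant $K_\epsilon$ in \eqref{est-H-inf-2}, \eqref{est-H-rho-1-2} and in Lemma~\ref{est-H-inf} can be taken uniform.

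The main obstacle I anticipate is bookkeeping the two scales cleanly: the argument-perturbation by the $l$-block has scale $l^{\alpha-1}$, but it sits on top of a base variable $U^n_{r,r-j}$ whose scale is $j^{\alpha-1}$, and the conditional expectations are nested ($\g^1_{r-j}\vee\g_{r-l}$ vs.\ $\g^1_{r-j}\vee\g_{r-l-1}$), so one has to be careful that conditioning on $\g^1_{r-j}$ really does freeze $U^n_{r,r-j}$ while leaving an honest independent stable perturbation to which Lemma~\ref{est-H-inf} applies. A secondary technical point is the case $l=j$, where the $l$-block is adjacent to the frozen block and the estimate must still hold; here one simply uses that $\|\phi^n_r\|_{L^\beta(\text{block }l)}\le Kl^{\alpha-1}$ uniformly, including $l=j$, so no separate treatment is needed. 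Once these are in place the bound follows by combining \eqref{est-H-inf-2}, \eqref{est-H-inf} and the conditional Jensen inequality, with the constant depending only on $\gamma$ (and on the fixed lower bound for the scales, hence on $N$).
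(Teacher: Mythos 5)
There is a genuine gap in how you produce the factor $j^{(\alpha-1)\gamma}$. The quantity $\vartheta^n_{r,j,l}$ is a \emph{second-order} difference: by \eqref{eq-S-S'-2} and \eqref{def-theta-23} it is simultaneously an increment in the $j$-block (since $\zeta^n_{r,j}$ subtracts both $\E[V^n_r|\g_{r-j}]$ and $\E[V^n_r|\g^1_{r-j}]$ from $\E[V^n_r|\g_{r-j+1}]$) and an increment in the $l$-block. The paper exploits exactly this: after reducing to $r=1$ by stationarity it writes $\vartheta^n_{1,j,l}$ as the four-term alternating sum \eqref{eq:62627}, which becomes the rectangle formula \eqref{eq:45}
\[
\vartheta^n_{1,j,l}=\tilde\E\Big[\int_{\tilde B^n_j}^{B^n_j}\int_{\tilde B^n_{l+1}}^{B^n_{l+1}}\Phi''_{\rho^n_{j,l}}\big(A^n_l+u_1+u_2\big)\,du_1\,du_2\Big],
\]
so that the uniform bound \eqref{est-H-28217} on $\Phi''_{\rho^n_{j,l}}$ (with $\rho^n_{j,l}=\|\phi^n_1\|_{L^\beta([1-l,1-j]\cup[2-j,1])}$, not $\rho^n_j$, bounded away from zero for $n\geq N$, $l\geq j\vee 2$) bounds $|\vartheta^n_{1,j,l}|$ by a product of the two side lengths $|B^n_j-\tilde B^n_j|$ and $|B^n_{l+1}-\tilde B^n_{l+1}|$; independence of the two blocks then yields $\E[|\vartheta^n_{1,j,l}|^\gamma]\leq K\,\|\phi^n_j\|^\gamma_{L^\beta([0,1])}\|\phi^n_{l+1}\|^\gamma_{L^\beta([0,1])}\leq Kj^{(\alpha-1)\gamma}l^{(\alpha-1)\gamma}$ via \eqref{eq:72138}. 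Your plan instead treats $\vartheta^n_{r,j,l}$ as a single first-order perturbation of $\Phi_{\rho^n_j}(U^n_{r,r-j})$ by the $l$-block and invokes the Lipschitz-type bounds \eqref{est-H-inf-2} and Lemma~\ref{est-H-inf}. That machinery delivers only one small factor: with kernels of $L^\beta$-norm of order $j^{\alpha-1}$ differing by a piece of order $l^{\alpha-1}$, Lemma~\ref{est-H-inf} gives at best $K\big(j^{(\alpha-1)(\beta/\gamma-1-\epsilon)}l^{\alpha-1}+l^{(\alpha-1)\beta/\gamma}\big)$, whose first term is far weaker than $j^{\alpha-1}l^{\alpha-1}$ when $\gamma$ is close to $\beta$ and whose second term contains no $j$ at all; the subsequent summations in \eqref{est-345} and \eqref{eq:77398} would then fail.

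The step you use to recover the missing $j^{\alpha-1}$ --- ``rescaling the kernels to unit $L^\beta$-norm pulls out a factor $j^{\alpha-1}$ from the argument of $\Phi$'' --- is not valid: $\Phi_\rho(x)=\E[|W+x|^p]-\E[|W|^p]$ is not homogeneous in $x$ (nor jointly in $(\rho,x)$ in any way that helps here), so no scale factor can be extracted from its argument; and, more fundamentally, a first-order difference estimate cannot detect that $\vartheta^n_{r,j,l}$ vanishes when the $j$-block is integrated out. To repair the argument you must use the double-increment structure explicitly, as in \eqref{eq:45}, with the uniform second-derivative bound \eqref{est-H-28217}. The remaining ingredients of your plan --- the stationarity reduction, the block estimate $\|\phi^n_m\|_{L^\beta([0,1])}\leq Km^{\alpha-1}$, the independence of the isolated blocks, and the need for a uniform lower bound on the relevant scale parameters for $n\geq N$ --- are correct and coincide with the paper's proof.
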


\begin{proof}
For fixed $n,j,l$, $\{\vartheta^n_{r,j,l}:r\geq 1\}$ is a  stationary sequence, and hence we may and do assume that $r=1$. Furthermore, we may and do assume that $l\geq j\vee 2$, since the case  $l=j=1$  can be covered by choosing a new  constant $K_\gamma$. 
% and omit $r$ in the notation by written e.g.\ $\vartheta^n_{j,l}$ instead of $\vartheta^n_{1,j,l}$.  
By definition of $\vartheta^n_{1,j,l}$ we obtain the representation 
\begin{equation}\label{eq:62627}
\vartheta^n_{1,j,l} =  \E[ | Y^n_1 |^p  | \g^1_{1-j}\vee \g_{1-l}] - \E[ | Y^n_1 |^p  |\g_{1-l}] - \E[ | Y^n_1 |^p  | \g_{1-j}^1\vee 
\g_{-l}] + \E[ | Y^n_1 |^p  | \g_{-l}].
 \end{equation}
Set $\rho_{j,l}^n = \| \phi^n_1 \|_{L^\beta([1-l,1-j]\cup [2-j,1])}$. For large enough $N\geq 1$ there exists $\epsilon>0$ such that $\rho_{j,l}^n\geq \epsilon$ for all $n\geq N, j\geq 1, l\geq j\vee 2$ (we have  $\rho_{j,l}^n=0$ for $l=1$). Hence by \eqref{est-H-28217} there exists a finite constant $K$ such that 
\begin{equation}\label{eq:73698}
|\Phi''_{\rho^n_{j,l}}(x)| \leq K \qquad \text{for all $n\geq N,\, j\geq 1,\, l\geq j\vee 2, \, x\in \R$}. 
\end{equation}
Let 
\begin{equation}
A_{l}^n= \int_{-\infty}^{-l} \phi_{1}^n(s) \,dL_s,  \qquad \qquad B_{j}^n=\int_{1-j}^{2-j} \phi_{1}^n(s)\,dL_s, 
\end{equation}
 and    $(\tilde B_{l+1}^n, \tilde B_{j}^n)$ denote a random vector which is independent of $L$ and which equals $ (B_{l+1}^n, B_{j}^n)$ in law. Let  moreover $\tilde \E$ denote expectation with respect to $(\tilde B_{l+1}^n, \tilde B_{j}^n)$ only. 
From \eqref{eq:62627}  we deduce that 
\begin{align}
\vartheta_{1,j,l}^n ={}&  \tilde \E\Big[  \Phi_{\rho^n_{j,l}}( A_{l}^n +B_{l+1}^n+B_{j}^n )- 
\Phi_{ \rho^n_{j,l}}(A_{l}^n+\tilde B_{l+1}^n+B_{j}^n)\\ {}&
\phantom{   \tilde \E\Big[ }   -\Phi_{\rho^n_{j,l}}(A_{l}^n+B_{l+1}^n+\tilde B_{j}^n)+ \Phi_{\rho^n_{j,l}}(A_{l}^n+\tilde B_{l+1}^n+\tilde B_{j}^n)\Big] \\ 
\label{eq:45} = {}& \tilde \E\Big[ \int^{B_{j}^n}_{\tilde B_{j}^n} \int^{B_{l+1}^n}_{\tilde B_{l+1}^n}  \Phi''_{\rho^n_{j,l}}(A_{l}^n+u_1+u_2)\,du_1\,du_2\Big],   
\end{align}
where $\int^x_y$ denotes $-\int^{y}_x$ if $x<y$. 
Hence by \eqref{eq:73698} we have 
\begin{align}
|  \vartheta_{r,j,l}^n |^\gamma \leq {}& K  \Big(  | B_{l+1}^n B_{j}^n |^\gamma +  |B_{j}^n|^\gamma  \E[ |B_{l+1}^n|^\gamma ] + \E[ |B_{j}^n|^\gamma] |B_{l+1}^n|^\gamma+\E[ | B_{j}^n|^\gamma ]\E[ | B_{l+1}^n |^\gamma]\Big), 
\end{align} 
and by independence of $B^n_{l+1}$ and $B^n_j$ for all $l\geq j$ we have 
\begin{equation}\label{eq:6128}
\E[  |  \vartheta_{r,j,l}^n |^\gamma ] \leq K \E[ | B_{j}^n|^\gamma ]\E[ | B_{l+1}^n |^\gamma] \leq K \| \phi^n_j \|_{L^\beta([0,1 ])}^\gamma \| \phi^n_{l+1} \|_{L^\beta([0,1 ])}^\gamma.
\end{equation}
Hence,   \eqref{eq:72138} completes the proof.  
\end{proof}

For all $m<-j$, the sequence $V_k:=\vartheta^n_{r,j,-k}$, for $k=m,\dots,-j$, is a martingale difference in the filtration 
$\tilde \g_k:= \g_{r-j}^1\vee \g_{r+k}$, $k=m,\dots,-j$. Hence by the  von Bahr--Esseen inequality  (see e.g.\ \cite[Lemma~4.2]{s02}) 
%Notice that, conditionally on $\g_{r-j}^1$,  
%$\{\vartheta_{r,j,l}^n:l=j,j+1,\dots\}$ are differences of a backward
%martingale. Due to the fact that the von Bahr--Esseen inequality  applies for backward martingale differences, and  by first conditioning on $\g_{r-j}^1$ and then take expectation in the conditionally distribution
 we obtain for all $\gamma\in [1,\beta)$
\begin{equation}\label{est-345}
\E[| \zeta_{r,j}^n|^\gamma] \leq 2  \sum_{l=j}^\infty \E[| \vartheta_{r,j,l}^n |^\gamma].
\end{equation}
According to Lemma~\ref{lem-est-theta}, we have the estimate $\E[| \vartheta_{r,j,l}^n|^\gamma]\leq K j^{(\alpha-1)\gamma}   l^{(\alpha-1)\gamma}$,  which together with \eqref{est-345} implies that 
\begin{equation}\label{eq:77398}
\E[| \zeta_{r,j}^n|^\gamma]\leq K j^{(\alpha-1)\gamma}  \sum_{l=j}^\infty l^{(\alpha-1)\gamma  }
\leq K j^{2(\alpha-1)\gamma+1} .
\end{equation} 
Eq.~\eqref{eq:77398} used on   $\gamma = 1$ yields that  series 
 $R^n_r = \sum_{j=1}^\infty \zeta^n_{r,j}$ converges absolutely a.s.
Thus, by rearranging the terms in \eqref{eq-S-S'-1}--\eqref{eq-S-S'-2} using  the  substitution $s=r-j$, we have 
\[
S_n - S^{\prime}_n = n^{1/(\alpha -1)\beta } \sum_{s=-\infty}^{n-1} M_{s}^n \quad \text{with} \quad M_{s}^n:= \sum_{r= 1\vee (s+1)}^n \zeta_{r,r-s}^n.   
\]
Recalling the definition of $\zeta^n_{r,j}$ in \eqref{eq-S-S'-2}, we note that $\E[ \zeta^n_{r,r-s} |\g_s]=0$ for all $s$ and $r$,  showing that 
 that $\{M^n_s: s\in (-\infty,n)\cap \Z \}$ are  martingale differences.  Using again  von Bahr--Esseen inequality (\cite[Lemma~4.2]{s02}) we deduce that 
\begin{align*}
\E[|S_n - S^{\prime}_n|^{r'}]&\leq K n^{r'/(\alpha -1)\beta } \sum_{s<n} \E[|M_{n,s}|^{r'}] \\
&\leq K n^{r'/(\alpha -1)\beta } \sum_{s<n} \left( \sum_{r= 1\vee (s+1)}^n \E[|\zeta_{r,r-s}^n|^{r'}]^{1/r'}  \right)^{r'} \\
&\leq K n^{r'/(\alpha -1)\beta } \sum_{s<n} \left( \sum_{r= 1\vee (s+1)}^n (r-s)^{1/r' +2(\alpha -1)}  \right)^{r'} =: A_n.
\end{align*}

It now remains to prove that $A_n\rightarrow 0$, and to do this we distinguish two cases (recall that $1<r'<\beta$ close enough to $\beta$). 
Split $A_n= \sum_{-n<s<n} + \sum_{s<-n}=: A^{\prime}_n + A^{\prime \prime}_n$ and assume for the moment that $1/\beta  +2(\alpha -1)<-1$ 
holds. Since the inner sum is summable (for $r'$ close enough to $\beta$) we immediately see that $A^{\prime}_n \leq K n^{r'/(\alpha -1)\beta + 1}$.
Since $\beta >1$ and $\alpha \in (0, 1-1/\beta)$ we deduce that $
A^{\prime}_n \rightarrow 0$.  
On the other hand, a direct computation shows that
\[
A^{\prime \prime}_n \leq K n^{r'/(\alpha -1)\beta + 3 + 2r'(\alpha -1)},
\]
and since $\beta (\alpha -1)<-1$, we readily obtain that 
$
A^{\prime \prime}_n \rightarrow 0$.
Now, assume that  
$
1/\beta  +2(\alpha -1)\geq -1$.
Then
\[
A^{\prime}_n\leq K n^{r'/(\alpha -1)\beta + 2 +r' +2r'(\alpha-1) }.
\]
Since $1/\beta  +2(\alpha -1)\geq -1$, which is equivalent to $1  +2\beta (\alpha -1)\geq -\beta$, we obtain for $r'$ close enough to $\beta$
\[
A^{\prime}_n\leq K n^{1/(\alpha -1) + 1}\rightarrow 0.
\] 
The convergence $A^{\prime \prime}_n\rightarrow 0$ is shown as above. 
\end{proof}
Next we will show that $S_n'$ can be approximated by a rescaled sum of certain i.i.d.\ random variables $\tilde Z_r$, $r\geq 1$. 

\begin{proof}[Step~2]
Set  $W_{r,j}:= \int_{r}^{r+1} \phi_{j}(u)\,dL_u$ and  
\begin{equation}
\tilde S_n:=n^{1/(\alpha-1)\beta} \sum_{r=1}^n \tilde Z_r, \qquad \tilde{Z}_r :=
 \sum_{j=1}^{\infty} \Big\{\Phi_{\rho_0} ( W_{j+r,r})-\E[\Phi_{\rho_0} (W_{j+r,r})]\Big\},
\end{equation}
where $\rho_0$ has been defined at \eqref{def-rho}.
We will show that 
\begin{equation}
S_n' - \tilde S_n \toop 0.
\end{equation}
The series $\tilde Z_r$ converges absolutely a.s.\ according to the same arguments as used in Step~1.  
 Recall from Step~1 that  $\E[ V^n_r |\g_{r-j}^1]=  \Phi_{\rho^n_j}(U_{r,r-j}^n)- \E[ \Phi_{\rho^n_j}(U_{r,r-j}^n)]$
 and the series $S^{\prime}_n$ converges absolutely a.s.  
By rearranging the  terms in  $S^{\prime}_n$  by the substitution $s=r-j$, we obtain the decomposition 
  $S^{\prime }_n - S^{\prime \prime}_n
= R_n^1 - R_n^2 + R_n^3$, where
\begin{align*}
{}& R_n^1 :=  n^{1/(\alpha -1)\beta} \sum_{s<0} \left( \sum_{j=1-s}^{n-s} 
\Big\{\Phi_{ \rho^n_j} ( U^n_{j+s,s}) - \E[\Phi_{\rho^n_j} ( U^n_{j+s,s}) ] \Big\}\right), \\
{}& R_n^2 :=  n^{1/(\alpha -1)\beta} \sum_{s=0}^{n-1} \left( \sum_{j>n-s} 
\Big\{ \Phi_{\rho_0 }(W_{j+s,s})- \E[ \Phi_{\rho_0 }(W_{j+s,s})]\Big\}\right), \\
{}& R_n^3 :=   \\   {}& n^{1/(\alpha -1)\beta}\sum_{s=0}^{n-1} \left( \sum_{j=1}^{n-s} \Big\{
\Phi_{\rho^n_j} ( U^n_{j+s,s}) - \E[\Phi_{\rho^n_j} ( U^n_{j+s,s}) ]- \big( \Phi_{\rho_0 }(W_{j+s,s})- \E[ \Phi_{\rho_0 }(W_{j+s,s})] \big)\Big\} \right). 
\end{align*}
Now, we will show that all terms converge to 0 in probability. We start with the term $R_n^2$.  
For any $l\in (p,\beta)$ with 
 $l(\alpha -1)<-1$, we have deduce by  \eqref{est-H-beta} and \eqref{eq:72138}
 \begin{align*}
{}& \E[|R_n^2|] \leq 2 n^{1/(\alpha -1)\beta} \sum_{s=0}^{n-1} \sum_{j>n-s} 
\E\left[ \left| \Phi_{\rho_0 }(   W_{j+s,s} )\right|\right] 
%\\
%&\leq  K n^{r/(\alpha -1)\beta} \sum_{s=0}^{n-1}   \left( \sum_{j>n-s} \big\| 
% H_{\rho }(  W_{j+s,s})-\E[H_{\rho }(   W_{j+s,s} )]\big\|_{L^r}\right) 
\\
\leq {}&  K n^{1/(\alpha -1)\beta} \sum_{s=0}^{n-1}   \sum_{j>n-s} j^{l(\alpha -1)}  % [1.5 ex]
\leq  K n^{1/(\alpha -1)\beta} \sum_{s=0}^{n-1}  s^{l(\alpha -1)+1}\leq K n^{1/(\alpha -1)\beta + l(\alpha -1)+2}.
\end{align*}
Choosing  $l$ arbitrary close to $\beta$, and taking into account  that
\[
2+ \beta (\alpha -1)< 1/(1-\alpha)\beta, 
\]
where the latter comes from the fact that $2-x < 1/x$ for any $x>1$, we conclude that $R_n^2 \toop 0$. 

In a similarly  way we   prove   $R_n^1 \toop 0$ in the following.  By our assumptions we may choose  $N\geq 1$ such that   $\{\rho^n_j: n\geq N, j\geq 1\}$ is bounded away from zero. For any $l\in (p,\beta)$ with 
 $l(\alpha -1)<-1$ and $n\geq N$ we have by \eqref{est-H-beta} and \eqref{eq:72138} that 
\begin{align}
  \E[ | R^1_n |] \leq {}&  2n^{1/(\alpha-1)\beta} \sum_{s<0} \sum_{j=1-s}^{n-s}\E[ | \Phi_{\rho^n_j} ( U^n_{j+s,s})| ]
  \leq  K n^{1/(\alpha-1)\beta} \sum_{s<0} 
 \sum_{j=1-s}^{n-s} j^{(\alpha-1) l }  \\ \leq {}&  K  n^{1/(\alpha-1)\beta} \sum_{s<0} 
\Big\{(1-s)^{(\alpha-1)l +1}  - (n-s)^{(\alpha-1)l +1} \Big\} \\ \label{eq:2272}
\leq {}& K n^{1/(\alpha-1)\beta} \sum_{u=1}^n u^{(\alpha-1)l +1} \leq K 
n^{1/(\alpha-1)\beta+ (\alpha-1)l +2}. 
\end{align} 
The estimate \eqref{eq:2272}  implies  $R^1_n\toop 0$ as above.  

%For this purpose we need to compute the functions $H_{\infty }$ and $\widehat{H}_j$.
%From the formula \eqref{xp} we deduce that 
%\begin{align}
%H_{\infty } (x) = a_p^{-1} \int_{\R} \frac{(\exp(iux) - 1) \exp(-|\sigma c_0 u|^{\beta} \int_{\R} |t_+^{\alpha } - (t-1)_+^{\alpha }|^\beta dt)}{|u|^{1+p}} du.
%\end{align} 
%Similarly, it holds that 
%\begin{align}
%\widehat{H}_j (x) = a_p^{-1} \int_{\R} \frac{(\exp(iux) - 1) \exp(-|\sigma c_0 u|^{\beta} \int_{\R \setminus [j-1,j]} 
%|t_+^{\alpha } - (t-1)_+^{\alpha }|^\beta dt)}{|u|^{1+p}} du.
%\end{align}
%Hence, we immediately deduce that 
%\[
%|H_{\infty } (x) - \widehat{H}_j (x)|\leq K j^{(\alpha -1)\beta }.
%\]
%
%Since, $R_n^3$ is a sum of independent random variables with mean 0, we conclude that
%\[
%\E [|R_n^3|^2] \leq  K n^{2/(\alpha -1)\beta}\sum_{s=0}^{n-1} \left( \sum_{j=1}^{n-s} 
%j^{(\alpha -1)\beta} \right)^2 \leq K  n^{1+ 2/(\alpha -1)\beta} \rightarrow 0,
%\]
%since $\beta (1-\alpha)<2$ (recall that $\alpha>0$ and $\beta<2$). 
%Hence, we are done. 

Next we will show that $R_n^3 \toop 0$. 
%To this end  it sufficies  
% to show that  for all $\epsilon>0$ we have 
%\begin{equation}\label{eq:974}
%\E[ |R^3_n| ]\leq K n^{1/(\alpha-1)\beta+(\alpha-1)\beta+2+\epsilon}.
%\end{equation}
%Indeed, $R_n^3 \toop 0$ follows by \eqref{eq:974} by choosing $\epsilon$ small enough, combining the inequality   $x+x^{-1}>2$ for all $x>1$ with  the assumption $(\alpha-1)\beta<-1$. To show the estimate \eqref{eq:974}
We start with  the following simple estimate 
\begin{align}
\E[|R_n^3|]\leq {}&  2 n^{1/(\alpha-1)\beta}\sum_{s=0}^{n-1} \sum_{j=1}^{n-s} 
\E\Big[\Big|  \Phi_{\rho^n_j} ( U^n_{j+s,s})- \Phi_{\rho_0 }(V_{j+s,s})\Big|\Big]
 \\ \label{eq:4729} \leq {}&  2 n^{1/(\alpha-1)\beta+1}\ \sum_{j=1}^{n} \E\Big[\Big|  \Phi_{\rho^n_j} ( U^n_{j,0})- \Phi_{\rho_0 }(V_{j,0})\Big|\Big].
%\\ 
% = {}& n^{q/(\alpha-1)\beta+1} \E\Big[\Big|  \sum_{j=1}^{n-s} 
%\widehat{H}_j^n(U_{j+s,s}^n )- H_{\infty }(a_j \zeta_s)\Big|^q\Big] 
\end{align}
%For all $s$ and $n$ we have 
%\begin{align}
%\E\Big[  \Big| \sum_{j=1}^{n-s} 
%\widehat{H}_j^n( U_{j,0}^n )- H_{\infty }(W_{j+s,s})\Big |\Big] \leq {}& 
%\sum_{j=1}^{n-s} \E\Big[\Big| 
%\widehat{H}_j^n(  U_{j+s,s}^n )- H_{\infty }(W_{j+s,s})\Big|\Big] \\ 
%\leq \sum_{j=1}^{n} \E\Big[\Big|
%\widehat{H}_j^n(  U_{j,0}^n )- H_{\infty }(W_{j,0})\Big|\Big].
%\end{align}
%We claim that 
%\begin{equation}\label{claim-12}
%\sum_{j=1}^{n} \E\Big[\Big|
%\widehat{H}_j^n(  U_{j,0}^n )- H_{\infty }(V_{j,0})\Big|\Big]\leq C n^{-1},
%\end{equation}
%which implies that 
%\begin{equation}
%\E[|R^3_n|]\leq C 2 n^{1/(\alpha-1)\beta}\to 0. 
%\end{equation}
Next we decompose 
\begin{align}\label{decomp-H-widehat-2}
\Phi_{\rho^n_j} ( U^n_{j,0})- \Phi_{\rho_0 }(V_{j,0}) = C_{j}^n+Q^n_j
\end{align}
where 
\begin{equation}
 Q^n_j= \Phi_{\rho^n_j}( U_{j,0}^n)- \Phi_{\rho_0}(U_{j,0}^n)\qquad\text{and}\qquad  C^n_j=\Phi_{\rho_0}(U_{j,0}^n)-
 \Phi_{\rho_0}(V_{j,0}).
\end{equation}
%Using the representation ?? of $H_\infty$ we obtain the following estimate
%\begin{align}
%|H_\infty(x)-H_\infty(y)| \leq K\Big(   (|x|\wedge 1+|y|\wedge 1)|x-y| \1_{\{|x-y|\leq 1\}} + |x-y|^p\1_{\{|x-y|>1\}}\Big) 
%\end{align}
%for all $x,y\in \R$. 
%Next note that 
We have that 
\begin{align}
U^n_{j,0}-W_{j,0}= \int_0^1 \zeta^n_j(u)\,dL_u\qquad\text{where}\qquad  \zeta^n_j(u):=  \phi_{j}^n(u)- \phi_j(u).
\end{align}
%Thus, using triangle inequality and H\"older inequality we get with $q\in (1,\beta)$ and $q'=q/(q-1)$ being the conjugate number to $q$
%\begin{align}\label{eq:748}
%\E[ | W^n_j|]\leq {}&  K \Big( \Big\{\| |V_{j,0}|\wedge 1 ||_{q'} +\| |U_{j,0}^n |\wedge 1 ||_{q'}\Big\}\| U_{j,0}^n-V_{j,0}| \|_q \\ 
%{}& \phantom{ K\Big(}+ \E[| U_{j,0}^n-V_{j,0} |^p  \1_{\{|  U_{j,0}^n-V_{j,0}| \geq 1\}} ]\Big) \\
%\leq {}& K\Big( \Big\{\| h_j \|_{L^\beta}^{(\beta-1)q/\beta}+\| g_j^n \|_{L^\beta}^{(\beta-1)q/\beta}\Big\} \| \zeta^n_j\|_{L^\beta}   + \| \zeta_{j}^n\|_{L^\beta}^\beta\Big)
%\end{align}
%where we have used that $q'>q$ and hence 
%$\| |Z|\wedge 1 \|_{q'}\leq \E[ |Z|^{q}]^{\frac{q-1}{q}}$. \marginpar{Add the argument for the latter part. }
In the following we will prove and  use the estimate  
\begin{equation}\label{eq:72138-2}
 \| \zeta^n_j\|_{L^\beta [0,1]}\leq K n^{-1} j^\alpha.
\end{equation}
Recall that  $g(s)=s^{\alpha} f(s)$ for $s\geq 0$.  
The estimate \eqref{eq:72138-2}  follows by the decompose
\begin{equation}\label{eq:723161}
\zeta_{j}^n = \hat\zeta_{j}^n + \bar \zeta^n_j
\end{equation}
with 
\begin{align*}
\hat \zeta_j^n(u) &=  \big[f((j-u)/n) -f(0)\big]\phi_j(u), \\
\bar \zeta^n_j(u) &=  \big[f((j-u)/n) -f((j-u-1)/n)\big] (j-u-1)_+^\alpha,
\end{align*}
 the triangle inequality and  continuous right differentiability of $f$ at zero.  Fix  $\epsilon>0$. 
 By Lemma~\ref{est-H-inf} used on $r=1$,   \eqref{eq:72138} and \eqref{eq:72138-2} 
 %for  $q=q_\epsilon\in (1,\beta)$ chosen  close enough  to $\beta$  
 we have
\begin{align}\label{eq:836a}
 \sum_{j=1}^n \E[ |C^n_j|]\leq {}& K \sum_{j=1}^n \big(  j^{(\alpha-1)(\beta-1-\epsilon)} n^{-1}j^{\alpha} +n^{-\beta } j^{\alpha \beta} \big)\leq K n^{(\alpha-1)(\beta-\epsilon)+1}\to 0\quad 
\end{align}
for $\epsilon$ small enough. 
By substitution, Lemma~\ref{helplem1}(i) and \eqref{eq:72138}, 
\begin{align}
{}&  \Big | |\rho_{j}^n|^\beta-   |\rho_0|^\beta\Big|  \leq  \Big| \| \phi^n_1 \|_{L^\beta(\R)}- \| \phi_1 \|_{L^\beta(\R)} \Big| 
+ \| \phi^n_1 \|_{L^\beta([-j,-j+1])}
 \\  {}& \leq K\Big(  n^{\alpha \beta+1} \Big| \| g_{0,n}\|_{L^\beta(\R)}^\beta -  \| h_{0,n}\|_{L^\beta(\R)}^\beta  \Big| + j^{\alpha-1}\Big)\leq K\Big( n^{(\alpha-1)\beta+1} +j^{\alpha-1}\Big).\label{eq:63734}
\end{align} 
For any $r\in (p,\beta)$ such that $(\alpha-1)r <-1$ we have  by \eqref{est-H-rho-1-2}, \eqref{eq:72138} and \eqref{eq:63734} that 
\begin{align}
\E[  | Q^n_j | ] = {}&  \E[  | \Phi_{\rho^n_j}( U_{j,0}^n)-\Phi_{ \rho_0}(U_{j,0}^n)| ] 
\leq K  \big| |\rho^n_j |^\beta-  | \rho |^\beta \big|  \E[ | U_{j,0}^n |^r   ]  
\\ \leq {}& K  \big| |\rho^n_j |^\beta-  | \rho |^\beta \big|  \| \phi^n_j \|_{L^\beta([0,1])}^r \leq K \Big(n^{(\alpha-1)\beta+1} j^{(\alpha-1)r }+j^{(\alpha-1)(1+r)}\Big),
 \end{align}
which implies that 
\begin{equation}\label{est-Q^n_r}
  \sum_{j=1}^n \E[ |Q^n_j|] \leq K\Big( n^{(\alpha-1)\beta+1} + n^{(\alpha-1) (1+r)+1}\Big)\leq K n^{( \alpha-1)\beta +1} .
\end{equation}
% Moreover, we have for all $l\in (p, \beta)$ by \eqref{est-H-beta} and \eqref{eq:72138} (use that $\rho^n_j\geq \epsilon$ for all $n\geq N$ and all $j\geq 1$) 
% \begin{align}
%   | r^n_j | \leq    \E[ |H_\rho(W_{j,0})|] +    \E[| H_{\rho^n_j} (U^n_{j,0})| ] 
%   \leq K  \Big( \E[ |W_{j,0} |^{l}] + \E[ |U^n_{j,0}|^{l}]\Big)
%   \leq K j^{(\alpha-1) l },
% \end{align}
%and hence 
%\begin{equation}\label{est-r_n}
%\sum_{j=1}^n | r^n_j | \leq K n^{(\alpha-1)l +1}.
%\end{equation}
The equations   \eqref{eq:4729}, \eqref{decomp-H-widehat-2},  \eqref{eq:836a} and  \eqref{est-Q^n_r}  show that 
\begin{equation}
\E[ |R^3_n| ]\leq K n^{1/(\alpha-1)\beta+(\alpha-1)\beta+2},
\end{equation}
which implies that $R^3_n\toop 0$  as above, and the proof is complete. 
\end{proof}

We are now ready to complete the proof of $S_n\schw S$.

%
%\noindent 
%{\bf Step~4:} Let 
%\begin{equation}
%\hat{S}_n:=n^{1/(\alpha-1)\beta} \sum_{r=1}^n \hat Z^n_r, \qquad \hat{Z}_i^n = \sum_{j=1}^{\infty} \Big\{H_{\infty} \big( b_j( L_i-L_{i-1})\big)-\E[ H_{\infty} \big( b_j( L_i-L_{i-1})\big)]\Big\},
%\end{equation}
%where $b_j=c_0(j^\alpha-(j-1)^\alpha)$. Show that 
%\begin{equation}
%\tilde  S_n - \hat S_n \toop 0.
%\end{equation}

\begin{proof}[Step~3]
We will show that $\tilde S_n\schw S$. 
Since the random variables $(\tilde Z_r)_{r\geq 0}$ are independent and identically distributed with mean zero, 
it is enough to show that  
\begin{equation}\label{eq:93673}
\lim_{x\to\infty}x^{(1-\alpha)\beta}\P(\tilde Z_0>x)= \gamma \qquad \text{and}\qquad \lim_{x\to\infty}x^{(1-\alpha)\beta} \P(\tilde Z_0<-x)=0
\end{equation}
cf.\  %  \cite[Theorem 5.25]{hj94} or
 \cite[Theorem~1.8.1]{SamTaq}. The 
constant $\gamma$ is defined in \eqref{eq:24245} below. 
To show \eqref{eq:93673} 
let us define the function $\overline{\Phi}:\R\to \R_+$ via
\[
\overline{\Phi}(x):= \sum_{j=1}^{\infty} \Phi_{\rho_0} (a_j x) \qquad \text{where} \qquad a_j:=j^{\alpha } - (j-1)^{\alpha}. 
\]
Note that \eqref{rep-H-est-2} implies that $\Phi_{\rho_0} (x)\geq 0$ and hence $\overline \Phi$ is positive. Moreover, by \eqref{est-H-beta}  and for $l\in (p,\beta)$ with $(1-\alpha)l <-1$ we have 
\begin{equation}\label{eq:232}
|\overline{\Phi}(x)|\leq K |x|^l \sum_{j=1}^{\infty} a_j^\beta \leq K |x|^l\sum_{j=1}^{\infty} j^{\beta(\alpha -1)}<\infty, 
\end{equation}
 which shows that $\overline \Phi$ is finite. 
 Eq.~\eqref{eq:232} shows moreover that $\E[ \overline \Phi(L_1)]<\infty$, and hence we can define a random variable $Q_0$ via 
 \begin{equation}
Q_0 = \overline \Phi(L_1)- \E[ \overline \Phi(L_1)] = \sum_{j=1}^{\infty} \big(\Phi_{\rho_0} (a_j L_1 )- \E[ \Phi_{ \rho_0}(a_j L_1)]\big),
 \end{equation}
 where the last sum converge absolutely a.s.
%Now, we need the exact asymptotic 
%behaviour of $\overline{H}(x)$ when $x\rightarrow \infty$. 
Since 
 $Q_0\geq - \E[ \overline \Phi(L_1)] $, we have that 
 \begin{equation}\label{eq:836b}
 \lim_{x\to\infty}x^{(1-\alpha)\beta} \P(Q_0<-x)=0. 
 \end{equation}
By the substitution $t=(x/u)^{1/(1-\alpha)}$ we have that 
\begin{align}
x^{1/(\alpha-1)} \overline \Phi(x)= {}& x^{1/(\alpha-1)} \int_0^\infty \Phi_{ \rho_0}(a_{1+[t]} x)\,dt
\\ = {}& (1-\alpha)^{-1}  \int_0^\infty \Phi_{\rho_0}(a_{1+[( x  / u)^{1/(1-\alpha)}]} x)u^{-1+1/(\alpha-1)}\,du \\ \label{eq:7268712}
\to {}& (1-\alpha)^{-1} \int_0^\infty \Phi_{\rho_0}(\alpha u)u^{-1+1/(\alpha-1)}\,du=\kappa\qquad \text{as }x\to \infty,
% \\ 
%\sum_{j=1}^{\infty} H_{\infty} (b_j x) \leq H(b_1 x)+ \sum_{j=2}^{\infty} H_{\infty} (\alpha 
%(j-1)^{\alpha-1} x)\\ \leq H(b_1 x)+ \int_0^\infty H_\infty(\alpha u^{\alpha-1} x)u^{-1+1/(\alpha-1)}\,du= 
%H(b_1 x)+ x^{1/(1-\alpha)}C_\alpha 
\end{align}
where we have used Lebesgue's dominated convergence theorem and the estimate  \eqref{hestimate} on $\Phi_{\rho_0}$, and the constant $\kappa$ coincides with the definition in Remark 
\ref{rem-const}. The connection between the tail behaviour 
of a symmetric $\rho$-stable random variable $S_{\rho}$, $\rho\in (1,2)$,  and its  scale parameter $\bar\sigma$ is given via 
\[
\mathbb{P}(S_{\rho}>x) \sim  \tau_{\rho} \bar\sigma^{\rho} x^{-\rho} /2\qquad \text{as } x\rightarrow \infty, 
\]  
where the function $\tau_{\rho}$ has been defined in \eqref{def-tau-rho}
(see e.g.\ \cite[Eq. (1.2.10)]{SamTaq}). Hence,  $\mathbb P (|L_1|>x)\sim \tau_{\beta}  x^{-\beta}$ as $x\rightarrow \infty$, and we readily deduce by \eqref{eq:7268712} that 
\begin{equation}\label{eq:24245}
\mathbb P(Q_0>x)\sim \gamma x^{(1-\alpha)\beta} \quad \text{with} \quad \gamma = 
\tau_{\beta}  \kappa ^{(1-\alpha )\beta }.
\end{equation}

Next we will show that 
for some $r>(1-\alpha)\beta$ we have 
\begin{equation}\label{eq:434}
\mathbb P(|\tilde Z_0 - Q_0|>x)\leq Kx^{-r} \qquad \text{for  all } x\geq 1,
\end{equation}
which implies  \eqref{eq:93673}, cf.\ \eqref{eq:836b} and \eqref{eq:24245}.
To show \eqref{eq:434}  it is sufficient to find  $r>(1-\alpha)\beta$ such that 
\[
\E[|\tilde Z_0 - Q_0|^r]<\infty  
\]
by Markov's inequality. Furthermore,   by Minkowski inequality and the definitions of $Q_0$ and $\tilde Z_0$  it sufficers to show that 
\begin{equation}
 \label{eq:6739}
\sum_{j=1}^{\infty} \| \Phi_{\rho_0}(W_{j,0} )
- \Phi_{\rho_0}(  a_j L_1)\|_{L^r}<\infty  
\end{equation}
(recall  that $r\geq 1$).
We choose an $r$ satisfying $(1-\alpha)\beta<r< \beta$, which is always possible since we have $\alpha\in (0,1)$ under our assumptions. 
%
%$r>1$ such that $r>(1-\alpha)\beta$,  $p r <\beta$ and $(\alpha-2)\beta/r<-1$. 
We note that $\phi_j(0)=a_j$, and hence obtain the estimates
\begin{align} \label{estimate-262}
\| \phi_j-a_j\|_{L^{\beta}([0, 1])}\leq K j^{\alpha -2},\qquad j\geq 1. 
\end{align}
For all $\epsilon>0$ we have by   Lemma~\ref{est-H-inf}, \eqref{eq:72138} and  \eqref{estimate-262} that 
\begin{equation}
 \| \Phi_{\rho_0}( W_{j,0} )
- \Phi_{\rho_0}(  a_j L_1)\|_{L^r}\leq K\Big( j^{ (\alpha-1)(\beta/r-\epsilon)-1}+ j^{(\alpha-2)\beta/r}\Big)
\end{equation}
which shows \eqref{eq:6739}, by choosing  $\epsilon= \beta/(2r)$ and noticing that $(\alpha-2)\beta/r<\alpha-2<-1$.  
\end{proof}

In the last step we will show that $r_n = n^{1-\frac{1}{(1-\alpha)\beta}} ( \E[| Y^n_1 |^p] -m_p)\to 0$. 
\begin{proof}[Step~4]
 Let $g_{0,n}$ and $h_{0,n}$ be  defined as in \eqref{def-g-i-n}, and 
  $\eta_p$ be the absolute $p$-moment of a standard symmetric $\beta$-stable random variable. By  Lemma~\ref{helplem1}(i)  
 \begin{align}
  |r_n| =  {}& \eta_p n^{1-\frac{1}{(1-\alpha)\beta}}n^{(\alpha +1/\beta)p } \big| \|g_{0,n}\|_{L^{\beta}}^{\beta }
-\| h_{0,n}\|_{L^{\beta}}^{\beta }  \big| 
\\ \leq {}& K n^{1-\frac{1}{(1-\alpha)\beta}+ (\alpha +1/\beta)p-\beta}\leq K n^{2-\frac{1}{(1-\alpha)\beta}+ (\alpha -1)\beta}\label{eq:7370}
 \end{align}
where the last inequality follows from the fact that   $p<\beta$.  Eq.~\eqref{eq:7370} implies that  
$r_n\to 0,$ which completes the proof. 
 \end{proof}

\subsection{Proof of Theorem \ref{sec-order}(ii)} \label{sec5.2}
We use the following decomposition:
\begin{align} \label{statdec2}
\sqrt{n}\Big(n^{-1+p(\alpha + 1/\beta)}V(p;k)_n- m_p\Big) =  Y_n^{(1)} +  Y_n^{(2)}
\end{align}
with
\begin{equation}
Y_n^{(1)} := \sqrt{n}\Big(n^{-1+p(\alpha + 1/\beta)}V(p;k)_n- m_p^n\Big), \qquad 
Y_n^{(2)} := \sqrt{n} (m_p^n -m_p).
\end{equation} 
We will prove that $Y_n^{(2)} \rightarrow 0$ and 
\[
Y_n^{(1)} = \frac{1}{\sqrt{n}} \sum_{i=k}^n Y_i^n \schw \mathcal N(0,\eta^2), \quad \text{where}\quad Y_i^n = n^{p(\alpha +1/\beta)} \Big(
|\Delta_{i,k}^n X|^p - \E[|\Delta_{i,k}^n X|^p] \Big)
\]
and  $\eta^2$
is defined at \eqref{def-eta-23}. The proof of the latter relies on the short memory approximation of the random variables $Y_i^n$. 
Recalling the representation $\Delta_{i,k}^n X = \int_{\R} g_{i,n}(s) \,dL_s$, we introduce the random variables
\begin{align*}
\Delta_{i,k}^{n,m} X := \int_{\R} g_{i,n}(s) \1_{[(i-m)/n,(i+m)/n]}(s) \,dL_s.  
\end{align*}
We will proceed by showing the convergence in distribution
\begin{equation}\label{eq:6725}
Y_{n,m}^{(1)} = \frac{1}{\sqrt{n}} \sum_{i=k}^n Y_i^{n,m} \schw \mathcal N(0,\eta^2_m) \qquad \text{as } n\rightarrow \infty, 
 \end{equation}
where $Y_i^{n,m} = n^{p(\alpha +1/\beta)} \Big(
|\Delta_{i,k}^{n,m} X|^p - \E[|\Delta_{i,k}^{n,m} X|^p] \Big)$ and $\eta_m^2$ are positive constants.
%\begin{equation}
% \eta^2_m =\text{var}(Y_k^{n,m})+ 2\sum_{i=k+1}^{m+k} \text{cov}(Y_k^{n,m},Y_i^{n,m}).
%\end{equation}
Next, we show that $\eta_m^2\rightarrow \eta^2$ as $m\rightarrow \infty$. 
In the last step we prove that
\begin{equation}\label{eq:672323}
\lim_{m\rightarrow \infty} \limsup_{n\rightarrow \infty} \E[|Y_{n,m}^{(1)} -Y_{n}^{(1)}|^2] = 0.
\end{equation}
%%%%%%%%%%%%%%%%%
\emph{Step~(i).} To show \eqref{eq:6725} we note that for fixed $n,m\geq 1$, $\{Y_i^{n,m}: i=k,\dots,n\}$ are $m$-dependent random variables. 
%Hence using the main result of \cite{Berk} 
%\[
%Y_{n,m}^{(1)} = \frac{1}{\sqrt{n}} \sum_{i=k}^n Y_i^{n,m} \quad \text{where}\quad Y_i^{n,m} = n^{p(\alpha +1/\beta)} \Big(|\Delta_{i,k}^{n,m} X|^p - \mathbb E[|\Delta_{i,k}^{n,m} X|^p] \Big)
%\]
%and that for fixed $n\geq1$, the random variables $\{Y_i^{n,m}: i=k,\dots, n\}$ is a stationary $m$-dependent sequence, in particular, $\{Y_i^{n,m}: i=k,\dots, r\}$ is indepnendet of $\{Y_i^{n,m}: i=r+m,\dots, n\}$ for all $r= k,\dots, n-m $.
By stationarity and $m$-dependence of $\{Y_i^{n,m}: i=k,\dots,n\}$ we have 
\begin{align}\label{eq:72727}
\text{var}(Y_{n,m}^{(1)}) = \frac{n-k}{n}\theta^m_n(0) + 2n^{-1} \sum_{i=1}^{m}(n-k-i) \theta^m_n(i)
\end{align}   
with $\theta^m_n(i)= \text{cov}(Y^{n,m}_k,Y_{k+i}^{n,m})$. 
Set 
\begin{equation}
V^m_i = \int_{\R} h_k(i-s) \1_{[(i-m)/n,(i+m)/n]}(s) \,dL_s\qquad \text{and}\qquad Y^m_i = | V^m_i|^p - \E[  | V^m_i|^p]. 
\end{equation}
By  the the line above  \eqref{con-L-1} we deduce that for all $d\geq 1$ and as $n\to\infty$, $(Y_i^{n,m})_{i=1}^d \schw (Y_i^m)_{i=1}^d$.
 For any $q>0$ with $pq<\beta$, the estimates of the proof of 
 Theorem~\ref{maintheorem}(ii) show that  
\begin{equation}\label{eq:92882}
 \mathbb E[|Y_i^{n,m}|^q] \leq K,
\end{equation}
and by the assumption $p<\beta/2$ we may choose $q>2$. Hence,  with $\theta^m(i):= \text{cov}(V^{m}_k,V_{k+i}^{m})$, it follows that $\theta^m_n(i)\to \theta^m(i)$ as $n\to\infty$, which by  \eqref{eq:72727} implies that 
\begin{equation}\label{eq:76726}
 \text{var}(Y_{n,m}^{(1)}) \to \theta^m_n(0) + 2\sum_{i=1}^{m} \theta^m(i)=:\eta^2_m\qquad \text{as }n\to \infty. 
\end{equation} 
By \eqref{eq:92882} and \eqref{eq:76726}, the convergence \eqref{eq:6725} follows by the main theorem in \cite{Berk} since the sequences $\{Y_i^{n,m}: i=k,\dots,n\}$ are  $m$-dependent for all $n\geq 1$.  
\qed
%Now, the Lindeberg condition holds due to the fact that 
%\begin{align*}  
%\sum_{i=k}^n E[|n^{-1/2} Y_i^{n,m}|^2 \1_{\{|n^{-1/2} Y_i^{n,m}|>\epsilon\}}]
%\leq K n^{-r/2}  E[|Y_i^{n,m}|^{2+r}]  \leq K n^{-r/2} \to 0, 
%\end{align*}
%for some $r>0$ with $(2+r)p<\beta$. 
%Set \begin{equation}
%Q_{i,n}= \sum_{j= m(i-1)+k}^{mi+k} Y_{i}^{n,m}
%\end{equation}
%and note that $\{Q_{i,n}: i , n \}$ is an independent triangular array which satisfies the Lindeberg condition. 
%Hence by the decomposition 
%\begin{align}
%Y_{n,m}^{(1)} =  \frac{1}{\sqrt{n}}  \sum_{i=1}^{ \lceil n/ k \rceil }Q_{i,n} +\frac{1}{\sqrt{n}}  
%\sum_{i=[n/k]n}^n Y_{i}^{n,m}
%\end{align}
%yields the convergence \eqref{eq:6725}. 

\noindent
\emph{Step~(ii).} To show that $\eta_m^2\to \eta^2$ as $m\to \infty$ we set  
\begin{equation}
V_i = \int_{\R} h_k(i-s)  \,dL_s,\qquad Y_i = | V_i|^p - \E[  | V_i|^p], \qquad \theta(i)= \textrm{cov}( Y_k, Y_{k+i}). 
\end{equation} By the definition of $\theta^m(i)$ and the continuity  of the stochastic integrals (cf.\ \cite{RajRos}) we have that  $\theta^m(i)\to \theta(i)$ as $m\to \infty$. Applying the formulas \eqref{xp} and  \eqref{charfun} we obtain the  expression
\begin{align}
\label{thetam} \theta^m(i) ={}&  a_p^{-2} \int_{\R^2} \frac{1}{|s_1s_2|^{1+p}} \psi_i^{m}(s_1,s_2)\, ds_1\, ds_2, \qquad \text{where}\\
\psi_i^{m}(s_1,s_2) ={}&  \exp \left(-  \int_{\R} |s_1 h_k^m(x) -s_2h_k^m(x+i)|^{\beta } dx \right) \nonumber  \\[1.5 ex]
{}&- \exp \left(-  \int_{\R} |s_1 h_k^m(x)|^{\beta} + |s_2h_k^m(x+i)|^{\beta} dx \right) \nonumber
\end{align}
and  $h_k^m (x) = h_k(x)\1_{[-m,m]}(x)$.  The functions $u= h_k^m$ satisfies the estimate \eqref{ufunction} for all $n\geq 1$ since 
$|h_k^m(x)| \leq K |x|^{\alpha}$ for $x \in [0,k+1]$ and $|h_k^m(x)| \leq K |x|^{\alpha-k}$ for $x>k+1$.  Since $u$ does not depend on $n\geq 1$ we obtain, by letting $n\to\infty$ in the estimate of Lemma~\ref{helplem2}(ii), that there exists some $r>1$ such that 
\begin{align} \label{eq}
|\theta^m(l)| \leq K l^{-r }\qquad \text{for all }m\geq 1, \, l\geq 0. 
\end{align}
Hence, by dominated convergence we have as $m\to\infty$, 
\[
\eta_m^2=\theta^m(0) + 2\sum_{i=1}^{m} \theta^m(i) \to
\theta(0) + 2\sum_{i=1}^{\infty}\theta(i)=\eta^2.
\] 
\qed

%%%%%%%%%%%%%%%%%%%%%%%%%%%%%%%%
\noindent \emph{Step~(iii).} 
%%%%%%%%%%%
In this step we prove \eqref{eq:672323}. 
%
%Again by \eqref{xp} and \eqref{charfun} we have 
%\begin{align}
%\label{thetamn} \theta^m_n(i) = {}& a_p^{-2} \int_{\R^2} \frac{1}{|s_1s_2|^{1+p}} \psi_i^{n,m}(s_1,s_2) \,ds_1 \,ds_2, \qquad \text{where}\\
%\psi_i^{n,m}(s_1,s_2) = {}& \exp \left(-  \int_{\R} |s_1 f_{n}^m(x) -s_2f_{n}^m(x+i)|^{\beta } dx \right) \nonumber  \\[1.5 ex]
%{}&- \exp \left(-  \int_{\R} |s_1 f_{n}^m(x)|^{\beta} + |s_2f_{n}^m(x+i)|^{\beta} dx \right), \nonumber
%\end{align}
%and  we have used the substitution $y=x/n$. Now, we show that the approximation \eqref{ufunction}, which is necessary for 
%Lemma  \ref{helpLemma} and \ref{helplem2}(ii), is satisfied for functions $f_n$ and $f_n^m$. We show it only for $f_n$; the result
%for $f_n^m$ follows by the same methods. 
%
%
%Now, we switch to various convergences of covariance functions. Throughout this file we call $L^q$ argument, the following method of proof: For all $i,l\geq 0$ the following implication holds 
%\[
%\lim_{n\to\infty}\mathbb E[|Z_i^n - Z_i|^q] =0\qquad \Rightarrow 
%\qquad \lim_{n\to\infty} \text{cov} (Z_i^n, Z_{i+l}^n) = \text{cov} (Z_i, Z_{i+l}) .
%\] 
%This method directly applies to show that $\theta^m_n(i) \rightarrow \theta^m(i)$ as $n\rightarrow \infty$ 
%for any fixed $m$. In the next step we need to show that 
%First note that $\theta^m(i) \to \theta(i)$ for all $i$ by the $L^q$ argument. 
%
  By stationarity we have that 
\begin{align}\label{eq:232882}
\E[|Y_{n,m}^{(1)} -Y_{n}^{(1)}|^2] \leq 2 \sum_{i=0}^{n-k} \overline{\theta}^m_n(i)
\end{align}
where $\overline \theta^m_n(i)= \text{cov}(Y^{n,m}_k-Y^n_k,Y_{k+i}^{n,m}-Y_{k+i}^n)$. We introduce the notation
\[
f_{n}^m (x):= \1_{[-m,m]}(x) D^k g_n(x), \qquad f_{n} (x):=  D^k g_n(x), \qquad \overline{f}_{n}^m = f_{n}^m - f_{n}
\]
where the functions $g_n$ are defined at \eqref{def-g-i-n}. 
By \eqref{xp} and  \eqref{charfun} we have 
\begin{align}
\label{thetamn2} 
\overline{\theta}^m_n(i) = {}& a_p^{-2} \int_{\R^2} \frac{1}{|s_1s_2|^{1+p}} \overline{\psi}_i^{n,m}(s_1,s_2)\, ds_1 \,ds_2, \\
\overline{\psi}_i^{n,m}(s_1,s_2) ={}&  \exp \left(- \int_{\R} |s_1 \overline f_{n}^m(x) -s_2
\overline f_{n}^m(x+i)|^{\beta } \,dx \right) \nonumber  \\[1.5 ex]
{}&- \exp \left(- \int_{\R} |s_1 \overline f_{n}^m(x)|^{\beta} + |s_2 \overline f_{n}^m(x+i)|^{\beta} \,dx \right). \nonumber
\end{align}
By Lemma~\ref{helplem} it follows that  $u=\overline{f}_{n}^m$ satisfies \eqref{ufunction}. Since $u$ in addition  vanishes  on $[0,m]$,  
Lemma~\ref{helplem2}(iii) and \eqref{eq:232882}  implies that there  exists $r>0$ such that 
\begin{equation}
\limsup_{n\to\infty} \E[|Y_{n,m}^{(1)} -Y_{n}^{(1)}|^2] \leq K m^{-r} 
\end{equation}
which shows \eqref{eq:672323}. 
\qed
%
%Finally, we need to prove that 
%\begin{equation}\label{eq:87367}
% \lim_{m\to \infty} \limsup_{n \to \infty} \left(
%\overline{\theta}^m_n(0) + 2n^{-1} \sum_{i=1}^{n-1}(n-i) \overline{\theta}^m_n(i) \right) =0.
%\end{equation}
%By Lemma~\ref{helpLemma}(i) we have for some $r>0$ we have for all $m\geq 1$ that 
%\begin{align}
%\limsup_{n \to \infty} \left(
%\overline{\theta}^m_n(0) + 2n^{-1} \sum_{i=1}^{n-1}(n-i) \overline{\theta}^m_n(i) \right)  \leq 2\limsup_{n\to \infty}   \sum_{i=0}^{n-1}\overline{\theta}^m_n(i)\leq K m^{-r},
%\end{align}
%which implies  \eqref{eq:87367}.
%
%
%
%
%
%
%
%
%
%
%
%
%
%
%
%The statement $\eta^2_m \rightarrow \eta^2$ as $m\rightarrow \infty $ follows directly from Lemma \ref{helplem2}, \eqref{ilnqest}
% and the  dominated convergence theorem
%since $h_k^m(x)\rightarrow h_k(x)$ as $m\rightarrow \infty $ for all $x\in \R$. Thus, we conclude that 
%\[
%Y_{n,m}^{(1)} \schw S^m \sim \mathcal N(0,\eta^2_m) \quad \text{as }n\rightarrow \infty \qquad \text{and} \qquad S^m \schw \mathcal N(0,\eta^2) \quad 
%\text{as }m\rightarrow \infty .
%\] 
%Applying again Lemma \ref{helpLemma}  and \ref{helplem2} to $\overline f_{n}^m$ and observing that 
%$\lim_{m\rightarrow \infty} \sup_{n\in \mathbb N}|\overline f_{n}^m(x)|=0$ for all $x\in \R$, we deduce 
%by dominated convergence theorem that 
%\[
%\lim_{m\rightarrow \infty} \lim_{n\rightarrow \infty} \E[|Y_{n,m}^{(1)} - Y_{n}^{(1)}|^2] \rightarrow 0.
%\]

Steps~(i)--(iii) implies that  $Y_n^{(1)} \schw \mathcal N(0,\eta^2)$, and the  convergence $Y_n^{(2)} = \sqrt{n} (m_p^n -m_p)\rightarrow 0$ follows by Step~(iv) below.  

\noindent
\emph{Step~(iv).} Following the same arguments as in Step~4 of the previous subsection, we readily deduce that
\[
|Y_n^{(2)}|\leq K n^{\alpha \beta +3/2} \left |\|g_{0,n}\|_{L^{\beta}(\R)}^{\beta }
-\| h_{0,n}\|_{L^\beta(\R)}^{\beta } \right|. 
\]
Applying Lemma \ref{helplem1}(ii) immediately shows that $Y_n^{(2)}\rightarrow 0$ as $n\to \infty$. 
\qed

\section{Appendix}
\setcounter{equation}{0}
\renewcommand{\theequation}{\thesection.\arabic{equation}}
In this section we present some technical results that are used in various proof steps.
Recall the definition of the functions $g_{i,n}$ and $h_{i,n}$ in  \eqref{def-g-i-n} and \eqref{def-h-i-n}.
Recall that $f:\R_+\to \R$  is given by $f(x)=g(x)x^{-\alpha}$ for all $x>0$, and $f(0)=1$. We may and do extend $f$ to a function $f:\R\to\R$ (also denoted $f$) which is differentiable on $\R$.

\begin{lem} \label{helplem1}
\begin{itemize}
\item[(i)] Assume that the conditions of Theorem~\ref{sec-order}(i) hold. Then, 
\[
\left| \|g_{0,n}\|_{L^\beta(\R)}^{\beta} - \| h_{0,n}\|_{L^\beta(\R)}^{\beta} \right | \leq K n^{-\beta}. 
\]
\item[(ii)] Assume that the conditions of Theorem~\ref{sec-order}(ii) hold. Then 
\[
\left| \|g_{0,n}\|_{L^\beta(\R)}^{\beta} - \| h_{0,n}\|_{L^\beta(\R)}^{\beta} \right | \leq K n^{-\alpha \beta -2}. 
\]
\end{itemize}
\end{lem}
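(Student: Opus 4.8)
The plan is to undo the self-similar scaling, reducing the statement to a pointwise comparison of $|D^kg_n|^\beta$ with $|h_k|^\beta$, and then to integrate region by region using Taylor expansions of finite differences and Lemma~\ref{helplem}. Writing $g(t)=t^\alpha f(t)$ for $t\ge 0$ (recall $c_0=1$), one checks directly from \eqref{def-g-i-n}, \eqref{def-h-i-n}, \eqref{def-g-n}, \eqref{def-h-13} that $g_{0,n}(x)=n^{-\alpha}D^kg_n(-nx)$ and $h_{0,n}(x)=n^{-\alpha}h_k(-nx)$ with $h_k=D^k\phi$, $\phi(y)=y_+^\alpha$; the substitution $y=-nx$ gives
\[
\big|\,\|g_{0,n}\|_{L^\beta(\R)}^{\beta}-\|h_{0,n}\|_{L^\beta(\R)}^{\beta}\,\big| = n^{-\alpha\beta-1}\,\Big|\int_\R\big(|D^kg_n(y)|^\beta-|h_k(y)|^\beta\big)\,dy\Big| ,
\]
so part (i) (with $k=1$) reduces to bounding the integral by $Kn^{(\alpha-1)\beta+1}$ and part (ii) to bounding it by $Kn^{-1}$ (both integrands being integrable, e.g.\ by Lemma~\ref{helplem} and $h_k\in L^\beta(\R)$).

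The second ingredient is a pointwise estimate for $e_n:=D^k\psi_n=D^kg_n-h_k$, where $\psi_n(y):=g_n(y)-y_+^\alpha=y_+^\alpha(f(y/n)-1)$. Since $f$ is $k$-times continuously right differentiable at $0$ with $f(0)=1$, one has $|f^{(m)}|\le K$ on $[0,\delta]$ for $m\le k$ and $|f(t)-1|\le Kt$ near $0$; combining Taylor's theorem for $k$-th order differences with the Leibniz rule $\psi_n^{(k)}(z)=\sum_{l=0}^k\binom{k}{l}(z^\alpha)^{(l)}\partial_z^{k-l}(f(z/n)-1)$ gives $|e_n(y)|\le K(y-k)^{\alpha-k+1}/n$ for $y\in[k+1,\delta n]$, while near the origin $|\psi_n(z)|\le K|z|^{\alpha+1}/n$ and \eqref{lemest1} of Lemma~\ref{helplem} give $|e_n(y)|\le K/n$ on $[0,k+1]$; on $(0,1)$ one has moreover the exact identities $D^kg_n(y)=y^\alpha f(y/n)$, $h_k(y)=y^\alpha$. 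For $y>\delta n$ one uses \eqref{lemest2}--\eqref{lemest3} of Lemma~\ref{helplem}: $|D^kg_n(y)|\le K(y-k)^{\alpha-k}$ for $y\le n$ and $|D^kg_n(y)|\le Kn^{\alpha-k}|g^{(k)}((y-k)/n)|$ for $y>n+k$, with $|h_k(y)|\le K(y-k)^{\alpha-k}$ throughout $(k,\infty)$.

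For $\beta\in[1,2)$ one finishes with the mean value theorem, $\big||D^kg_n|^\beta-|h_k|^\beta\big|\le\beta\max(|D^kg_n|,|h_k|)^{\beta-1}|e_n|\le K(y-k)^{(\alpha-k)\beta+1}/n$ on $[k+1,\delta n]$; the integral converges exactly when $\alpha<k-2/\beta$, giving $O(n^{-1})$ there, and the regions $[0,k+1]$, $(\delta n,\infty)$ contribute $O(n^{-1})$ and $O(n^{(\alpha-k)\beta+1})\le O(n^{-1})$ (the tail via $g^{(k)}\in L^\beta((1,\infty))$ and monotonicity of $|g^{(k)}|$), so the integral is $O(n^{-1})$ and the left side is $O(n^{-\alpha\beta-2})$; this proves (ii). For (i) the same computation with $k=1$ works: now $\alpha>1-2/\beta$ holds automatically (as $\beta<2$), so the exponent $(\alpha-1)\beta+1$ exceeds $-1$, the bulk integral is controlled by its value at $\delta n$, one gets the rate $n^{(\alpha-1)\beta+1}$ for the integral, hence $n^{-\alpha\beta-1}\cdot n^{(\alpha-1)\beta+1}=n^{-\beta}$ for the left side; here the extra hypothesis $|g'(t)|\le Kt^{\alpha-1}$ for all $t>0$ furnishes the tail bound $|D^1g_n(y)|\le K(y-1)^{\alpha-1}$ for all $y>1$.

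The hard part, I expect, is the case $\beta<1$ in (ii), which occurs only for $k\ge 3$: the bound $\big||a|^\beta-|b|^\beta\big|\le|a-b|^\beta$ is then too lossy, and must be replaced by splitting each region according to whether $|h_k(y)|\ge 2|e_n(y)|$. On the larger set one uses $\big||D^kg_n|^\beta-|h_k|^\beta\big|\le K|h_k|^{\beta-1}|e_n|$ with the lower bound $|h_k(y)|\ge c(y-k)^{\alpha-k}$ on all of $(k,\infty)$ — valid because $h_k=D^k\phi$, being an iterated increment of the function $\phi$ whose $k$-th derivative $c\,t^{\alpha-k}$ is monotone and of constant sign, never vanishes there — so $|h_k|^{\beta-1}\le K(y-k)^{(\alpha-k)(\beta-1)}$ and the bulk bookkeeping is unchanged; on the complementary set $|D^kg_n|\le 3|e_n|$, the integrand is $\le K|e_n|^\beta$, and this set is empty on $[k+1,\delta n]$ for large $n$. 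On $(0,1)$ the exact formulas give $\big||D^kg_n|^\beta-|h_k|^\beta\big|=y^{\alpha\beta}|f(y/n)^\beta-1|\le Ky^{\alpha\beta+1}/n$, and on $[1,k+1]$ one again splits by the size of $h_k$, using local integrability of $|h_k|^{\beta-1}$ near its finitely many zeros. The bookkeeping of exponents against the hypotheses $\alpha<k-1/\beta$ and $\alpha<k-2/\beta$, together with this $\beta<1$ near-origin analysis, is where essentially all the effort lies; the remaining estimates are routine.
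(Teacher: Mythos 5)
Your proposal is correct and follows essentially the same route as the paper: after rescaling, both arguments derive a pointwise $O(n^{-1})$-type bound on $g_{0,n}-h_{0,n}$ from a $k$th order Taylor expansion exploiting the smoothness of $f=g(t)/t^\alpha$ at $0$ (your Leibniz bound on $D^k\psi_n$ versus the paper's decomposition $g_{0,n}=\sum_l r_{l,n}$ with $r_{0,n}=f\,h_{0,n}$ are two organizations of the same estimate), then integrate region by region using the mean-value inequality for $|x|^\beta-|y|^\beta$ and the same exponent bookkeeping against $\alpha<k-2/\beta$ (resp.\ $\alpha<1-1/\beta$). Your explicit treatment of the case $\beta<1$ via a lower bound on $|h_k|$ away from its zeros is more careful than the paper's, which handles that case only implicitly through the factor $\max\{|g_{0,n}|^{\beta-1},|h_{0,n}|^{\beta-1}\}$.
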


\begin{proof}
We recall that $g(x)=x^{\alpha}_+f(x)$ and $f(0)=1$. We start by proving part (ii) of the lemma.

Recall that $k\geq 2$ and $\alpha \in (0,k-2/\beta)$.
 By Lemma \ref{helplem} 
and condition $\alpha < k- 1/\beta$ it holds for all $n\geq 1$ that 
\begin{align} \label{est1}
A_n:=\int_{-\infty}^{-1} |h_{0,n} (x)|^{\beta}\, dx \leq K n^{-k\beta } \int_{1}^{\infty} x^{(\alpha -k)\beta} \,dx \leq K n^{-k\beta}.
\end{align}
The same estimate holds for the function $g_{0,n}$.
On the other hand, we have that 
\begin{align} \label{est2}
B_n:=\left| \int_{-\frac{2 k}{n}}^{0} |g_{0,n} (x)|^{\beta}\, dx - \int_{-\frac{2 k}{n}}^{0} | h_{0,n} (x)|^{\beta} \,dx \right|
\leq K n^{-\alpha \beta -2},
\end{align}
which follows by the estimate $| |x|^\beta - |y|^\beta|\leq K \max\{ |x|^{\beta-1},|y|^{\beta-1}\} |x-y|$ for all $x,y>0$, and that for all  $x\in [-\frac{2 k}{n}, 0]$ we have by differentiability of $f$ at zero that 
\begin{equation}
| g_{0,n}(x)- h_{0,n}(x)| = \Big| \sum_{j=0}^k (-1)^j \binom{k}{j} \big\{f(-j/n-x)
-1\big\}  (-j/n-x)_+^\alpha\Big|\leq K n^{-1} |h_{0,n}(x)| ,
\end{equation}
together with the estimate \eqref{lemest1} from Lemma~\ref{helplem} on $h_{0,n}$ and $g_{0,n}$. 
Recalling that $g(x)=x^{\alpha}_{+} f(x)$ and using $k$th order Taylor expansion of $f$ at $x$, we deduce the following identity
\begin{align*}
g_{0,n}(x)={}&  \sum_{j=0}^k (-1)^j \binom{k}{j} g\big(-j/n-x\big) \\
= {}& \sum_{j=0}^k (-1)^j \binom{k}{j} \big(-j/n-x\big)_{+}^{\alpha}
\left( \sum_{l=0}^{k-1} \frac{f^{(l)}(-x)}{l!} (-j/n)^l + \frac{f^{(k)}(\xi_{j,x})}{k!} (-j/n)^k \right) \\
={}&  \sum_{l=0}^{k-1} \frac{f^{(l)}(-x)}{l!} \left( \sum_{j=0}^k (-1)^j \binom{k}{j} (-j/n)^l \big(-j/n-x\big)_{+}^{\alpha} \right) \\
{}&+ \left( \sum_{j=0}^k  \frac{f^{(k)}(\xi_{j,x})}{k!} (-1)^j \binom{k}{j} (-j/n)^k \big(-j/n-x\big)_{+}^{\alpha} \right),  
\end{align*}
where $\xi_{j,x}$ is a certain intermediate point. Now, by rearranging terms we can find coefficients $\lambda_0,\cdots,\lambda_k$
and $\tilde{\lambda}_0,\cdots,\tilde{\lambda}_k$
(which are in fact bounded functions in $x$)
such that  
\begin{align*}
g_{0,n}(x)&= \sum_{l=0}^k \lambda_l(x) n^{-l} \left( \sum_{j=l}^k (-1)^j \binom{k}{j} j(j-1) \cdots (j-l+1) \big(-j/n-x\big)_{+}^{\alpha} \right) \\
&= \sum_{l=0}^k \tilde{\lambda}_l(x) n^{-l} \left( \sum_{j=l}^k (-1)^j \binom{k-l}{j-l}  \big(-j/n-x\big)_{+}^{\alpha} \right) 
=: \sum_{l=0}^k r_{l,n} (x). 
\end{align*}
At this stage we remark that the term $r_{l,n} (x)$ involves $(k-l)$th order differences of the function $(-x)_{+}^\alpha$ (at scale $n^{-1}$
and with a certain shift) and $\lambda_0(x)=\tilde{\lambda}_0(x)=f(-x)$. 
Now, observe that 
\begin{align*}
 C_n:={}& 
\int_{-1}^{-\frac{2 k}{n}} \left| |g_{0,n}(x)|^{\beta} - |h_{0,n}(x)|^{\beta} \right|  
dx \\
 \leq {}&  K \int_{-1}^{-\frac kn} \max\{|g_{0,n}(x)|^{\beta-1}, |h_{0,n}(x)|^{\beta-1}\} |g_{0,n}(x)- h_{0,n}(x)| \,dx.
\end{align*}
Since $r_{0,n}=f h_{0,n}$,  it holds  that 
\begin{equation}
|r_{0,n}(x) - h_{0,n}(x) | \leq K  n^{-k} x^{\alpha-k+1}.
\end{equation}
Recalling that $\alpha \in (0,k-2/\beta)$ and using the substitution $x=n^{-1}y$, we deduce that 
\begin{align} \label{est4}
\int_{-1}^{-\frac{2 k}{n}} \max\{|g_{0,n}(x)|^{\beta-1}, |h_{0,n}(x)|^{\beta-1}\}  |r_{0,n}(x)- h_{0,n}(x)|  
\,dx \leq K n^{-\alpha \beta - 2}.
\end{align}  
For $1\leq l\leq k$,  we readily obtain the approximation
\begin{align*} 
\int_{-1}^{-\frac{2 k}{n}} \max\{|g_{0,n}(x)|^{\beta-1}, | h_{0,n}(x)|^{\beta-1}\} |r_{l,n}(x)| \,dx 
\leq K n^{-l-1-\alpha \beta} \int_{k}^{n} y^{(\alpha -k)\beta  +l } \,dy 
\end{align*}
using again the substitution $x=n^{-1}y$. It holds that 
\begin{align} \label{est3}
 \int_{k}^{n} y^{(\alpha -k)\beta  +l } dy \leq 
\begin{cases}
K & (\alpha -k)\beta  +l < -1 \\
K \log(n)n^{(\alpha -k)\beta  +l+1} & (\alpha -k)\beta  +l \geq -1.
\end{cases}
\end{align}
By \eqref{est4} and \eqref{est3} we conclude that  
\begin{align*} 
C_n
\leq K \left( n^{-\alpha \beta - 2}+ \log(n)
n^{-k \beta} \right).
\end{align*}
Since $\alpha \beta +2<k\beta$ due to the assumption $\alpha \in (0,k-2/\beta )$, part (ii) readily follows from \eqref{est1} and \eqref{est2}.

Now, we proceed with part (i), which is in fact easier. Recall that $k=1$ and $\alpha \in (0,1-1/\beta)$, which implies that $\beta \in (1,2)$. 
As in \eqref{est1} and \eqref{est2}  it holds that
\begin{align} \label{est6}
A_n=
\int_{-\infty}^{-1} |h_{0,n} (x)|^{\beta} \,dx \leq K n^{-\beta } \int_{1}^{\infty} x^{(\alpha -1)\beta}\, dx \leq K n^{-\beta} 
\end{align} 
(the same estimate holds for the function $g_{0,n}$) and
\begin{align} \label{est7}
B_n=\left| \int_{-\frac{2}{n}}^{0} |g_{0,n} (x)|^{\beta} \,dx - \int_{-\frac{2}{n}}^{0} | h_{0,n} (x)|^{\beta} \,dx \right|
\leq K n^{-\alpha \beta -2}.
\end{align}
Finally, applying the methods of \eqref{est4}--\eqref{est3}, we deduce the inequality 
\begin{align} \label{est8}
&\left| \int_{- 1 }^{-\frac 2n} |g_{0,n} (x)|^{\beta} \,dx - 
\int_{- 1 }^{-\frac 2n} | h_{0,n} (x)|^{\beta}\, dx \right| \\
 &\qquad\leq K \Big( \int_{- 1 }^{-\frac 2n} | h_{0,n} (x)|^{\beta} x \,dx + n^{-1} 
\int_{- 1 }^{-\frac 2n} | h_{0,n} (x)|^{\beta-1} x^{\alpha } \,dx \Big) \nonumber \\
&\qquad \leq K 
n^{-\beta}. \nonumber
\end{align} 
Now, note that $\alpha \beta +2>\beta $ since $\alpha >0$ and $\beta \in (1,2)$. We obtain the assertion of part (i) by \eqref{est6} and \eqref{est7}.
\end{proof}
Now, we introduce an auxiliary continuous function $u=u_n: \R_{+} \mapsto \R$, which satisfies the inequality
\begin{align} \label{ufunction}
|u(x )| \leq{}&  K \left(|x|^{\alpha} \1_{[0,k+1]} (x)+ |x|^{\alpha-k} \1_{[k+1, n  )} (x) \right.\\
{}&\left. \phantom{ K \Big(} +  
n^{\alpha-k }(\1_{[n , n+k]}(x) +
v((x-k)/n) \1_{(n+k ,\infty )}(x)) \right),
\end{align}
where  $v\in L^{\beta }((1 ,\infty ))
\cap C((1 ,\infty ))$ is a decreasing function. 
The next lemma
presents some approximations for certain integrals of $u$. 

\begin{lem} \label{helpLemma}
Suppose that $u$ is a function satisfying \eqref{ufunction},   $q\in [0,\beta/2)$, $k\geq 2$, $\alpha <k-2/\beta $ and set 
\begin{align*}
I_{l,n,q}:= \int_{0}^{\infty}    |u (x+l)| ^{\beta-q} |u (x)|^{q}\,dx. 
\end{align*}
\begin{enumerate}
\item[(i)] \label{item-help-1}There exists   $r=r_{\alpha,\beta,q,k}>1$ such that  for all $n\geq 1,\, l=0,\dots,n$ we have   
\begin{align} \label{ilnqest}
I_{l,n,q} \leq K (l^{-r} + n^{-r}).
%J_{l,n,q}:= K \left( \max\{\log(l)l^{(\alpha -k)\beta + 1}, l^{(\alpha -k)(\beta -q)}\} + \log(n) n^{(\alpha -k)\beta+1} \right).
\end{align}
\item[(ii)] There exists $r=r_{\alpha,\beta,q,k}>0$ such that for all $m\geq 0$ and all  functions $u$ satisfying \eqref{ufunction} and vanishing on $[0,m]$  we have 
\begin{align} \label{eq:Ilnb}
\limsup_{n\to\infty}\sum_{l=0}^{n-1} I_{l,n,q}
 \leq K m^{-r}.
\end{align}
%and $\max\{(\alpha -k)\beta + 1, (\alpha -k)(\beta -q)\}<-1$.
\end{enumerate}
\end{lem}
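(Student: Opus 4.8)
\textbf{Proof plan for Lemma~\ref{helpLemma}.}
The plan is to estimate $I_{l,n,q}$ by splitting the range of integration according to the four regimes appearing in \eqref{ufunction}, namely $x\in[0,k+1]$, $x\in(k+1,n)$, $x\in[n,n+k]$, and $x\in(n+k,\infty)$, and to analyse separately the ``near'' range (small $x$, where the shift $x+l$ is already in the polynomial-decay regime once $l$ is large) and the ``far'' range (large $x$). For the contribution from $x\in[0,k+1]$, I would use $|u(x)|\le Kx^\alpha$ together with $|u(x+l)|\le K(x+l)^{\alpha-k}$ (valid once $l\ge k+1$), giving a bound $\le K l^{(\alpha-k)(\beta-q)}\int_0^{k+1}x^{q\alpha}\,dx\le K l^{(\alpha-k)(\beta-q)}$. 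For the contribution from $x\in(k+1,n)$, I would bound both factors by the polynomial $|u(y)|\le Ky^{\alpha-k}$, yielding $\le K\int_{k+1}^{n}(x+l)^{(\alpha-k)(\beta-q)}x^{q(\alpha-k)}\,dx$; using that $\alpha-k<0$ one splits this at $x=l$ (or $x=l\wedge n$) to get, up to constants, $l^{(\alpha-k)(\beta-q)+1}$ plus a tail term controlled by $n^{(\alpha-k)\beta+1}$ (recall $(\alpha-k)\beta<-2$ by the hypothesis $\alpha<k-2/\beta$, so these exponents are strictly negative, in fact $<-1$). The contributions from $x\ge n$ involve the factor $|u(x+l)|^{\beta-q}$ with $x+l\ge n$, hence a factor $n^{(\alpha-k)(\beta-q)}$ or better from the decreasing $L^\beta$ tail $v$, multiplied by $\int_{\{x\ge n\}}|u(x)|^q\,dx$ which is $O(n^{(\alpha-k)q+1})$ from the polynomial part plus a finite piece from $\|v\|_{L^\beta}$; combining exponents again gives a negative power of $n$. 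Collecting all four pieces and choosing
$$r=r_{\alpha,\beta,q,k}:=\min\{(k-\alpha)(\beta-q),\ (k-\alpha)(\beta-q)-1,\ (k-\alpha)\beta-1\}>1,$$
which is strictly $>1$ precisely because $\alpha<k-2/\beta$ forces $(k-\alpha)\beta>2$ and $q<\beta/2$ keeps $(k-\alpha)(\beta-q)>(k-\alpha)\beta/2>1$, yields \eqref{ilnqest} for $0\le l\le n$. The small values $l=0,\dots,k$ are absorbed into $K$ since then $I_{l,n,q}\le K\|u\|_{L^\beta}^\beta\le K$ uniformly in $n$.

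For part (ii), the point is that when $u$ vanishes on $[0,m]$ the effective variable starts at $m$, so every occurrence of the lower endpoint $0$ (or $k+1$) above is replaced by $m$, and every factor $|u(x)|$, $|u(x+l)|$ enjoys the extra smallness coming from $x\ge m$. Concretely I would redo the same four-regime split but now over $x\ge m$: the ``$x$ small'' piece becomes $\int_m^{m+k+1}x^{q\alpha}(x+l)^{(\alpha-k)(\beta-q)}\,dx$ and, crucially, the polynomial piece contributes $\int_{m}^{n}(x+l)^{(\alpha-k)(\beta-q)}x^{q(\alpha-k)}\,dx$, whose sum over $l=0,\dots,n-1$ can be bounded (interchanging the $l$-sum and the $x$-integral, and using $\sum_{l\ge0}(x+l)^{(\alpha-k)(\beta-q)}\le K x^{(\alpha-k)(\beta-q)+1}$ since that exponent is $<-1$) by $K\int_m^\infty x^{(\alpha-k)(\beta-q)+1+q(\alpha-k)}\,dx=K\int_m^\infty x^{(\alpha-k)\beta+1}\,dx\le Km^{(\alpha-k)\beta+2}$, which is $Km^{-r}$ with $r=(k-\alpha)\beta-2>0$ by hypothesis. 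The endpoint pieces near $x\approx n$ are handled as in (i) and vanish in the $\limsup_{n\to\infty}$ after summation (their total is $O(n^{(\alpha-k)\beta+2})\to0$), while the infinite-tail piece $x>n+k$ with the decreasing $v$ is controlled by $\sum_{l}\int v((x-k)/n)^{\beta-q}\cdots$ which, after the substitution $y=(x-k)/n$, is $O(n^{\text{negative}})\to 0$; so only the genuinely $m$-dependent bound $Km^{-r}$ survives the double limit.

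The main obstacle I anticipate is the bookkeeping in the overlap regime $x\in(k+1,n)$ of part (i): here both arguments $x$ and $x+l$ sit in the polynomial window, and one must carefully distinguish the sub-range $x\lesssim l$ (where $(x+l)^{\alpha-k}\asymp l^{\alpha-k}$ and the $x$-integral is the active one, producing the $l^{1+q(\alpha-k)}$-type factor that must then combine with $l^{(\alpha-k)(\beta-q)}$ to give a power $<-1$) from the sub-range $l\lesssim x<n$ (where $x+l\asymp x$ and one integrates a single power of $x$ whose exponent $(\alpha-k)\beta$ is $<-2$, producing the $n$-dependent remainder). Getting the exponent of $l$ to come out strictly below $-1$ — equivalently $r>1$ — is exactly where both hypotheses $\alpha<k-2/\beta$ and $q<\beta/2$ are used, and in the critical comparison $(k-\alpha)(\beta-q)>1$ one should check: $(k-\alpha)(\beta-q)>(k-\alpha)\cdot\beta/2>(2/\beta)\cdot(\beta/2)=1$. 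Once the exponent arithmetic is pinned down, the remaining estimates are routine applications of the bounds $\int_a^\infty x^{-s}\,dx\le K a^{1-s}$ for $s>1$ and $\int_0^a x^{t}\,dx=a^{t+1}/(t+1)$ for $t>-1$, together with the elementary inequalities $(x+l)^{\gamma}\le x^\gamma$ and $(x+l)^\gamma\le l^\gamma$ for $\gamma<0$.
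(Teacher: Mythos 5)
Your decomposition is the same as the paper's: split the integral according to the four regimes of \eqref{ufunction}, bound the middle range $\int_{k+1}^{n}$ by splitting at $x=l$ and distinguishing whether $x+l$ has passed $n$, and close with the exponent arithmetic $(k-\alpha)\beta>2$ (from $\alpha<k-2/\beta$) and $(k-\alpha)(\beta-q)>(k-\alpha)\beta/2>1$ (from $q<\beta/2$); part (ii) differs only cosmetically in that you interchange the $l$-sum with the $x$-integral where the paper bounds each $\tilde I_{l,n,q}$ for $l\le m$ separately, and both give $m^{(\alpha-k)\beta+2}$. One slip to fix: in the main text you bound the near sub-range of the middle integral by $l^{(\alpha-k)(\beta-q)+1}$ and accordingly put $(k-\alpha)(\beta-q)-1$ into the definition of $r$, claiming it exceeds $1$ because $(k-\alpha)(\beta-q)>1$ --- that only gives positivity, and indeed $(k-\alpha)(\beta-q)$ can be arbitrarily close to $1$ under the hypotheses. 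The correct bound, which you in fact state in your ``obstacle'' paragraph, is $l^{(\alpha-k)(\beta-q)}\cdot l^{q(\alpha-k)+1}=l^{(\alpha-k)\beta+1}$ when $q(\alpha-k)>-1$ (and $l^{(\alpha-k)(\beta-q)}$ up to a logarithm otherwise, exactly the case distinction the paper makes), so the middle entry of your minimum should read $(k-\alpha)\beta-1$, which is $>1$; with that correction the argument goes through.
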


\begin{proof}
(i) To show the estimate \eqref{ilnqest} we decompose $I_{l,n,q}$ into four terms as follows. 
First applying the estimate at \eqref{ufunction} we deduce that
\[
\int_0^{k+1}    |u (x+l)|^{\beta-q} |u (x)|^{q} \,dx \leq K l^{(\alpha -k)(\beta -q)}. 
\] 
On the other hand, we obtain that 
\begin{align*}
&\int_{k+1}^{n }    |u (x+l)| ^{\beta-q}  |u (x)|^{q} \1_{\{x+l<n \}} \,dx  \\
& \qquad \leq K l^{(\alpha -k)\beta} \int_{k+1}^{n } \left( \frac xl + 1 \right)^{(\alpha -k)(\beta -q)}
\left( \frac xl  \right)^{(\alpha -k)q} dx \\
&\qquad  \leq K l^{(\alpha -k)\beta+1} \int_{(k+1)/l}^{\infty} (y+1)^{(\alpha -k)(\beta -q)} y^{(\alpha -k)q}  \,dy \\ 
&\qquad \leq K \max\{\log(l)l^{(\alpha -k)\beta + 1}, l^{(\alpha -k)(\beta -q)}\}. 
\end{align*} 
For the last approximation we used the fact that $(\alpha -k)\beta<-1$, which insures the integrability at infinity, while for the 
integrability near $0$ we distinguished the cases $(\alpha -k)q >-1$ and $(\alpha -k)q \leq -1$. Observing that the function $v$ introduced at \eqref{ufunction} is continuous and hence bounded on compact sets, we conclude that 
\begin{align*}
&\int_{k+1}^{n }    |u (x+l)| ^{\beta-q}|u (x)|^{q} \1_{\{x+l>n \}} \,dx  
 \leq K n^{(\alpha -k)(\beta-q)} 
\int_{k}^{n } x^{(\alpha-k)q} \,dx \\
&\qquad  \leq K
\begin{cases}
 \log(n)n^{(\alpha-k)\beta +1} & (\alpha-k)q \geq -1 \\
  n^{(\alpha -k)(\beta-q)}  & (\alpha-k)q < -1.
\end{cases} 
\end{align*} 
Finally, for the last term we have that
\[
\int_{n}^{\infty}    |u (x+l) |^{\beta-q}|u (x)|^{q} \,dx \leq K n^{(\alpha -k)\beta } \left(1+
\int_{n}^{\infty} v(x/n)^{\beta} \,dx \right) \leq K  n^{(\alpha -k)\beta +1},
\]
where we used that the function $v$ is decreasing on $(1 ,\infty )$ and $v\in L^{\beta }((1 ,\infty ))$. By noticing that $\max\{(\alpha -k)\beta + 1, (\alpha -k)(\beta -q)\}<-1$,  we obtain  \eqref{ilnqest} by a combination of  the above four estimates. 

\noindent
(ii) To show \eqref{eq:Ilnb} suppose that the function $u$, in addition,  vanish on $[0,m]$. By the decomposition and estimates use in (i) above we have 
\begin{equation}
\limsup_{n\to\infty}\sum_{l=0}^{n-1} I_{l,n,q}\leq \limsup_{n\to\infty}\sum_{l=0}^n \tilde I_{l,n,q}  
\end{equation}
where 
\begin{equation}
\tilde I_{l,n,q}   := \int_{m}^{n }    |u (x+l)| ^{\beta-q}  |u (x)|^{q} \1_{\{x+l<n \}} \,dx.
\end{equation}
For all $l\leq m$,
\begin{equation}
\tilde I_{l,n,q}^m   \leq K l^{(\alpha -k)\beta+1} \int_{m/l}^{\infty} (y+1)^{(\alpha -k)(\beta -q)} y^{(\alpha -k)q}  \,dy 
\leq K m^{(\alpha -k)\beta + 1}
\end{equation}
and hence 
\begin{equation}
 \limsup_{n\to\infty}\sum_{l=0}^n \tilde I_{l,n,q} \leq K\Big( m^{(\alpha -k)\beta + 1} + \sum_{l=m+1}^\infty \max\{\log(l)l^{(\alpha -k)\beta + 1}, l^{(\alpha -k)(\beta -q)}\} \Big) , 
\end{equation}
showing \eqref{eq:Ilnb}. 
%To obtain \eqref{eq:Ilnb} we note that $n^{(\alpha -k)\beta +2}\rightarrow 0$ since 
%$\alpha < k - 2/ \beta$.
%We also remark that $(\alpha -k)\beta + 1<-1$  due to our condition $\alpha <k-2/\beta $, while $(\alpha -k)(\beta -q)
%>(\alpha -k)\beta + 1$ implies that $\alpha <k-1/q$ and in this case we have $(\alpha -k)(\beta -q) <-1$, because
%\[
%\alpha < k-1/q < k-1/(\beta-q)
%\] 
%since $q<\beta/2$. Hence, we conclude \eqref{eq:Ilnb}. 
\end{proof}
Now, we present one of the main results, which provides estimates for various covariance functions. 
Assume that a given function $u=u_{n}$ satisfies \eqref{ufunction} and 
define
\begin{align}\label{eq:83736}
\theta _{n}(l)={}&   a_p^{-2} \int_{\R^2} \frac{1}{|s_1s_2|^{1+p}} \psi_l (s_1,s_2) \,ds_1 ds_2, \qquad \text{where}\\
\psi_l (s_1,s_2) ={}&  \exp \left(- \int_{\R} |s_1u(x) -s_2u (x+l)|^{\beta } \,dx \right) \nonumber  \\[1.5 ex]
{}&- \exp \left(-  \int_{\R} |s_1 u(x)|^{\beta} + |s_2u(x+l)|^{\beta} \,dx \right). \
\end{align}

\begin{lem} \label{helplem2}
Suppose that $u$ is a function satisfying \eqref{ufunction} and  $\theta_n(l)$ is defined in \eqref{eq:83736}.
\begin{itemize}
\item[(i)] Assume that $k=1$ and $\alpha <1-1/\beta $. Then, for any $l\geq 1$, we obtain
\begin{align} \label{thetalineq1}
|\theta_n (l)|\leq K l^{(\alpha -1)\beta +1}.
\end{align}
\item [(ii)] Suppose that $k\geq 2$ and $\alpha <k-2/\beta $. There exists   $r=r_{\alpha,\beta,q,k}>1$ such that  for all $n\geq 1,\, l=0,\dots,n$ we have   
\begin{align} \label{ilnqest-2}
\theta_n(l)\leq K (l^{-r} + n^{-r}).
%J_{l,n,q}:= K \left( \max\{\log(l)l^{(\alpha -k)\beta + 1}, l^{(\alpha -k)(\beta -q)}\} + \log(n) n^{(\alpha -k)\beta+1} \right).
\end{align}
\item [(iii)] Suppose that $k\geq 2$ and  $\alpha <k-2/\beta $. 
There exists $r=r_{\alpha,\beta,q,k}>0$ such that for all $m\geq 0$ and all  functions $u$ satisfying \eqref{ufunction} and vanishing on $[0,m]$  
\begin{align} 
\limsup_{n\to\infty}\sum_{l=0}^{n-1} \theta_n(l)\leq  K m^{-r}.
\end{align}
%
%
%There exists $r=r_{\alpha,\beta,q,k}>0$ such that for all $m\geq 0$ and all  functions $u$ satisfying \eqref{ufunction} and vanishing on $[0,m]$  we have 
%we deduce for some $r=r_{\alpha,\beta,q,k}>1$ and all $n\geq 1, \,l=1,\dots,n$ 
%that  
%\begin{align*}
%%\label{eq:7232828}
%|\theta _{n}(l)|\leq K (l^{-r} + n^{-r}) .
%% K(J_{l,n,0} + J_{l,n,p} + J_{l,n,\beta -1}),
%\end{align*}
%%where the quantity $J_{l,n,q}$ has been defined in \eqref{ilnqest}. 
\end{itemize}
\end{lem}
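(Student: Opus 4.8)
The plan is to estimate $|\theta_n(l)|$ by bounding the inner function $\psi_l(s_1,s_2)$ and then controlling the double integral over $(s_1,s_2)\in\R^2$. First I would observe that, writing $A_1=\int_\R|s_1u(x)|^\beta\,dx$, $A_2=\int_\R|s_2u(x+l)|^\beta\,dx$ and $A_{12}=\int_\R|s_1u(x)-s_2u(x+l)|^\beta\,dx$, we have by the elementary inequality $|e^{-A_{12}}-e^{-(A_1+A_2)}|\le |A_1+A_2-A_{12}|$ (valid since both exponents are nonnegative) together with the fact that $e^{-A_{12}}\le 1$, so that a more careful bound of the form $|\psi_l(s_1,s_2)|\le C\min\{1,\,|A_1+A_2-A_{12}|\}\,e^{-c(A_1+A_2)}$ for suitable constants is available by splitting the range of $(s_1,s_2)$ according to whether $|s_1|,|s_2|$ are large or small. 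The quantity $A_1+A_2-A_{12}$ is the cross term; by convexity/subadditivity estimates for the function $t\mapsto|t|^\beta$ (distinguishing $\beta\le 1$ and $\beta>1$, the latter needing a mean-value bound $||a|^\beta-|a-b|^\beta|\le K(|a|^{\beta-1}+|b|^{\beta-1})|b|$), this is controlled by a mixed integral $\int_\R|u(x)|^{\beta-q}|u(x+l)|^q\,dx$-type expression, i.e.\ precisely the integrals $I_{l,n,q}$ of Lemma~\ref{helpLemma}.

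The key steps, in order, would be: (1) reduce $|\theta_n(l)|$ to an integral of $|\psi_l(s_1,s_2)|$ against $|s_1s_2|^{-1-p}$; (2) establish the pointwise bound on $|\psi_l|$ described above, using that $u\in L^\beta$ with $\|u\|_\beta$ bounded uniformly (which follows from \eqref{ufunction} together with $\alpha<k-1/\beta$), so that the Gaussian-type decay $e^{-c(A_1+A_2)}\ge e^{-c(|s_1|^\beta+|s_2|^\beta)\|u\|_\beta^\beta}$ makes the $s$-integral converge; (3) after integrating out $s_1,s_2$ (a change of variables $s_i\mapsto s_i/\|u\|_\beta$ plus the identity $\int_\R(1-\cos(us))|u|^{-1-p}du=a_p|s|^p$ in spirit, or simply a dominated-integral estimate), arrive at a bound of the form $|\theta_n(l)|\le K\sum_{q}I_{l,n,q}^{c_q}$ for finitely many exponents $q\in[0,\beta/2)$ and powers $c_q$; (4) invoke Lemma~\ref{helpLemma}(i) to get $|\theta_n(l)|\le K(l^{-r}+n^{-r})$ with $r>1$, which gives part~(ii); for part~(i) ($k=1$) the same computation with the sharper input $I_{l,n,q}\lesssim l^{(\alpha-1)\beta+1}$ from the $k=1$ case of the argument in Lemma~\ref{helpLemma} yields \eqref{thetalineq1}; and for part~(iii) sum over $l$ and apply Lemma~\ref{helpLemma}(ii), using that $u$ vanishes on $[0,m]$ so that all the mixed integrals start at $m$, producing the $m^{-r}$ decay after taking $\limsup_{n\to\infty}$.

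The main obstacle I expect is step (2)–(3): getting a bound on $|\psi_l(s_1,s_2)|$ that is simultaneously integrable in $(s_1,s_2)$ near the origin (where $|s_1s_2|^{-1-p}$ blows up, so one needs $|\psi_l|\lesssim |s_1s_2|^{p}$-type vanishing, coming from the fact that $\psi_l$ is built from differences of functions that agree to order $|s|^\beta$ and $\beta>2p$) and at infinity (where one needs the exponential decay, but must be careful that the cross term $A_{12}$ does not spoil positivity of the exponent — this requires $\int|s_1u(x)-s_2u(x+l)|^\beta dx$ to stay comparable to $|s_1|^\beta+|s_2|^\beta$ up to the overlap, which is where the decay of $I_{l,n,q}$ enters). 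Balancing these two regimes and tracking that the resulting exponent $r$ stays strictly above $1$ (needed for summability in part~(iii), and asserted in (i) via $(\alpha-1)\beta+1<0$ and in (ii) via $\alpha<k-2/\beta$) is the delicate bookkeeping. I would handle it by splitting the $(s_1,s_2)$-plane into the region $\{|s_1|^\beta+|s_2|^\beta\le 1\}$, where a Taylor/convexity expansion of the exponentials gives $|\psi_l|\lesssim (|s_1|^\beta+|s_2|^\beta)\cdot(\text{mixed integral})$ and the $|s_1s_2|^{-1-p}$ singularity is absorbed since $p<\beta/2\le\beta$, and its complement, where $|\psi_l|\le 2e^{-c(|s_1|^\beta+|s_2|^\beta)}$ gives outright integrability, and in each region bound the mixed integral by $I_{l,n,q}$ for an appropriate $q$.
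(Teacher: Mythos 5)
Your overall strategy coincides with the paper's: split the $(s_1,s_2)$-integral into $[-1,1]^2$ and its complement, bound $|\psi_l|$ on the outer region by expanding $e^{-A_{12}}-e^{-(A_1+A_2)}$ around the common exponential factor $e^{-(|s_1|^\beta+|s_2|^\beta)\|u\|_{L^\beta}^\beta}$, control the cross term $|A_1+A_2-A_{12}|$ via the inequality $\bigl||x-y|^\beta-|x|^\beta-|y|^\beta\bigr|\leq K\min\{|x|,|y|\}^\beta$ (plus the $\min\cdot\max^{\beta-1}$ term when $\beta>1$), reduce everything to the integrals $I_{l,n,q}$, and finish with Lemma~\ref{helpLemma}. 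Parts (i)--(iii) then follow exactly as you indicate.

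There is, however, one step that fails as you have written it: the near-origin estimate. You claim that on $\{|s_1|^\beta+|s_2|^\beta\leq 1\}$ a Taylor expansion gives $|\psi_l|\lesssim(|s_1|^\beta+|s_2|^\beta)\cdot(\text{mixed integral})$ and that the singularity $|s_1s_2|^{-1-p}$ is then absorbed because $p<\beta/2$. But
\begin{equation}
\int_{[0,1]^2}\frac{|s_1|^\beta+|s_2|^\beta}{|s_1s_2|^{1+p}}\,ds_1\,ds_2 \;\geq\; \int_0^1 s_1^{\beta-1-p}\,ds_1\int_0^1 s_2^{-1-p}\,ds_2=\infty
\end{equation}
for every $p>0$, so a bound in which the $s$-dependence factors out as $|s_1|^\beta+|s_2|^\beta$ can never close the argument. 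The point is that you must \emph{not} discard the joint structure of the cross term: keeping $|A_1+A_2-A_{12}|\leq K\int_0^\infty\min\{|s_1u(x)|,|s_2u(x+l)|\}^\beta\,dx$ and applying Fubini, the inner $(s_1,s_2)$-integral of $|s_1s_2|^{-1-p}\min\{|s_1u(x)|,|s_2u(x+l)|\}^\beta$ over $[0,1]^2$ produces, for each $x$, terms of the form $|u(x)|^{q}|u(x+l)|^{\beta-q}$ with $q\in\{0,p,\beta-p\}$ --- the min in $s$ is what supplies the interpolation exponent $q$ strictly between $p$ and $\beta-p$ (here $2p<\beta$ is used), and this is precisely how the bound $\theta(l)_{2.1}+\theta(l)_{2.2}\leq K(I_{l,n,0}+I_{l,n,p})$ arises in the paper via the ratio $a_k(x,l)=|u(x+l)|/|u(x)|$ and a case analysis on $s_2 a_k(x,l)\lessgtr 1$. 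Your earlier remark that the cross term is "controlled by a mixed integral $\int|u(x)|^{\beta-q}|u(x+l)|^q\,dx$" shows you see the target, but the route you propose to reach it (factor out $|s_1|^\beta+|s_2|^\beta$, then absorb the singularity) does not get there; the Fubini computation with the min kept intact is the missing ingredient. A second, minor point: Lemma~\ref{helpLemma} is stated only for $k\geq 2$, so for part (i) you must rerun the $I_{l,n,q}$ estimates with $k=1$ and $\alpha<1-1/\beta$ to justify the rate $l^{(\alpha-1)\beta+1}$ rather than simply citing "the $k=1$ case" of that lemma.
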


\begin{proof} 
(a) We decompose the first integral via $\int_{\R^2} = \int_{[-1,1]^2} + \int_{\R^2 \setminus[-1,1]^2}$. In this part we will 
analyze the second integral.     
We need the following identity to deal with large $s_{1},s_{2}$: 
\begin{align*}
&\psi _{l}(s_{1},s_{2})= \exp \left(- ( | s_{1} | ^{\beta }+ | s_{2} | ^{\beta })\int_{\mathbb{R}} | u(x) | ^{\beta } dx\right)\\
& \times
\sum_{r\geq 1}\frac{  1}{r!}\left( \int_{\mathbb{R}}\left| s_{1}u(x)-s_{2}u(x+l) \right|^{\beta } dx -\int_{\mathbb{R}} | s_{1}u(x) | ^{\beta }+ | s_{2}u(x+l) | ^{\beta }\,dx\right)^{r}.
\end{align*}
We will make much use of the inequality    
\begin{align} \label{mainest}
 \left| |x -y|^\beta  - |x|^\beta  -|y|^\beta  \right|
\leq 
\begin{cases} K \left(\min\{|x|,|y|\}^{\beta} + \min\{|x|,|y|\} \max\{|x|,|y|\}^{\beta -1} \right) & \beta >1 \\ 
K\min\{|x|,|y|\}^{\beta}  & \beta \leq 1.
\end{cases}
\end{align}
We start with the case $\beta \leq 1$. According to Lemma \ref{helpLemma}, for each $K>0$ we can find $l_0, n_0\in \N$ such that 
for all $l\geq l_0$, $n\geq n_0$ we have that $K':=I_{l,n,0}<K$. For such $l$ and $n$ we have
\begin{align}
\label{theta1} \theta (l)^{{ \beta \leq 1}}_1:={}& \int_{\mathbb{R}^{2}\setminus [-1,1]^{2}}\frac{|\psi _{l}(s_{1},s_{2})|}{ | s_{1}s_{2} | ^{1+p}}\,ds_{1}\,ds_{2}\\
\leq{}&  K \int_{\R^2 \setminus[-1,1]^2}  \frac{1}{|s_1s_2|^{1+p}} \exp(-K  (|s_1|^{\beta}+ |s_2|^{\beta})) \sum_{r\geq 1}\frac{ 1}{r!} 
| s_{2} | ^{\beta r} 
 I_{l,n,0}^{r}\,ds_{1}\,ds_{2}  \\
\leq {}& K I_{l,n,0} \int_{\R^2 \setminus[-1,1]^2}  \frac{1}{|s_1s_2|^{1+p}} \exp(-K |s_1|^{\beta}+ (K'-K) |s_2|^{\beta}) \,ds_1\,ds_2,
\end{align}
where the latter integral is finite since $K'<K$. For $l<l_0$, we trivially have that 
\begin{align} \label{theta1.1}
|\theta _{n}(l)|<K,
\end{align}
where we use that $|\psi _{l}(s_{1},s_{2})|<2$ by definition. 
Let us now consider the case $\beta >1$. Applying exactly the same arguments as for $\beta \leq 1$, 
we deduce for $l$ large enough 
\begin{align} \label{theta1.2}
\theta (l)_{1}^{\beta >1}\leq {}&  K\int_{\mathbb{R}^{2}\setminus [-1,1]^{2}}\frac{\left( -K( | s_{1} | ^{\beta }+ | s_{2} | ^{\beta })\right)}
{ | s_{1}s_{2} | ^{1+p}} \\
{}& \times \sum_{r\geq 1}\frac{ 1}{r!}\left(  | s_{1} | ^{\beta -1} | s_{2} | I_{l,n,\beta -1}+ | s_{2} | ^{\beta }I_{l,n,0}\right)^{r} ds_{1}\,ds_{2}\\
\leq{}&  K( I_{l,n,\beta -1 }+ I_{l,n,0}).
\end{align}
This estimate completes the first part of the proof. 
%\qed 
\\ \\
(b)  Given the boundedness of the exponential function on compact intervals and symmetry arguments, we are left with discussing 
the quantity 
\begin{align} \label{theta2}
 \theta (l)_2:= \int_{[0,1]^2}  \frac{1}{|s_1s_2|^{1+p}} 
\Big(  \int_{\R} \Big| | s_1 { u }(x)-s_2{ u }(x+l)|^{\beta} 
 - |s_1 { u }(x)|^{\beta} - |s_2{ u }(x+l)|^{\beta} \Big |\, dx \Big)\,ds_1\,ds_2.
\end{align}
This requires a much deeper analysis. Clearly, we obtain the inequality
\[
\theta (l)_2 \leq K(\theta (l)_{2.1} + \theta (l)_{2.2})
\]
with 
\begin{align}
\theta (l)_{2.1} = {}&  \int_{[0,1]^2}  \frac{1}{|s_1s_2|^{1+p}} 
\Big(  \int_{0}^{\infty} \min\{|s_1 { u }(x)|^{\beta}, |s_2{ u }(x+l)|^{\beta}\} \,dx \Big)\,ds_1\,ds_2 \\
\theta (l)_{2.2} ={}&  \int_{[0,1]^2}  \frac{1}{|s_1s_2|^{1+p}} 
\Big(  \int_{0}^{\infty} \min\{|s_1 { u }(x)|, |s_2{ u }(x+l)| \} \\
&{} \phantom{\int_{[0,1]^2}  \frac{1}{|s_1s_2|^{1+p}} 
\Big(  \int_{0}^{\infty} }  \times \max\{|s_1 { u }(x)|^{\beta-1}, |s_2{ u }(x+l)|^{\beta-1}\} \,dx \Big)\,ds_1\,ds_2
\end{align}
where the  term $\theta (l)_{2.2}$ appears only in case $\beta >1$. We start by handling the quantity $\theta (l)_{2.1}$. Set 
$a_k(x,l):= |{ u }(x+l)|/|{ u }(x)|\in \mathbb{R}\cup \{\infty \}$ for $x>0$, and recall the convention $\int_{\infty }^{a}=0$ for any $a\in \mathbb{R}\cup \{\pm\infty \}$.  Notice that we can rearrange the involved integrals due to positivity of the integrand. Hence,
we deduce
\[
\theta (l)_{2.1} = \theta (l)_{2.1.1} + \theta (l)_{2.1.2}
\]  
with
\begin{align}
\theta(l) _{2.1.1}&=\int_{[0,1]^{2}}(s_{1}s_{2})^{-1-p}\Big(\int_{0}^{\infty } | s_{1}u {(x)}  | ^{\beta } \1_{\{s_{1}\leq s_{2}a_k(x,l)\}}\,dx\Big)\,ds_{1}\,ds_{2}\\
&=\int_{0}^{\infty } |u(x)|^{\beta }\Big(\int_{0}^{1}s_{2}^{-1-p}\int_{0}^{\min\{1,s_{2}a_k(x,l)\}}s_{1}^{-1-p+\beta }\,ds_{1}\,ds_{2}\Big)\,dx. \label{eq:387450}
\end{align}
Next we will estimate the parenthesis in \eqref{eq:387450} as follows 
%\begin{align}
%\int_{0}^{1}s_{2}^{-1-p}\int_{0}^{\min\{1,s_{2}a_k(x,l)\}}s_{1}^{-1-p+\beta }\,ds_{1}\,ds_{2}
%\end{align}
\begin{align}
{}& \int_{0}^{1}s_{2}^{-1-p}\int_{0}^{\min\{1,s_{2}a_k(x,l)\}}s_{1}^{-1-p+\beta }\,ds_{1}\,ds_{2}\\
\leq {}&  K\int_{0}^{1}s_{2}^{-1-p}\Big(\1_{\{s_{2}a_k(x,l)\leq 1\}}\int_{0}^{s_{2}a_k(x,l)}s_{1}^{-1-p+\beta }\,ds_{1}
% \\ &\hspace{5cm}
 +\1_{\{s_{2}a_k(x,l)\geq 1\}}\int_{0}^{1}s_{1}^{-1-p+\beta }ds_{1} \Big)\,ds_{2}\\
\leq {}& K\int_{0}^{1}s_{2}^{-1-p} \Big(\1_{\{s_{2}a_k(x,l)\leq 1\}}(s_{2}a_k(x,l))^{-p+\beta } 
+\1_{\{s_{2}a_k(x,l)\geq 1 \}} \Big)\,ds_{2}\\
\leq {}& K \Big(\int_{0}^{\min\{1,a_k(x,l)^{-1}\}}a_k(x,l)^{\beta -p}s_{2}^{-1-2p+\beta }ds_{2} 
%\\ &\hspace{4cm}
+\1_{\{a_k(x,l)^{-1}\leq 1\}}\int_{a_k(x,l)^{-1}}^{1}  s_{2}^{-1-p}ds_{2} \Big)\\
\leq {}& K \bigg( a_{k}(x,l)^{\beta -p}\big( \1_{\{a_{k}(x,l)^{-1}\leq 1\}} a_{k}(x,l)^{-(\beta -2p)}\\
{}&\phantom{K \bigg(}+\1_{\{a_{k}(x,l)^{-1}\geq 1\}} \big)+\1_{\{a_{k}(x,l)^{-1}\leq 1\}}(1+a_{k}(x,l)^{p})\Big)\\
\leq {}& K \bigg(\1_{\{|u_{k}(x+l)|\geq u_{k}(x)\}}\left( 2\left| \frac{u_{k}(x+l)}{u_{k}(x)} \right|^{p}+1 \right)+\1_{\{|u_{k}(x+l)|\leq u_{k}(x)\}}\left| \frac{u_{k}(x+l)}{u_{k}(x,l)} \right|^{\beta -p}\bigg).
\end{align}
Hence, 
\begin{equation}
\label{theta2.1.1}
\theta(l) _{2.1.1}\leq K(I_{l,n,0}+I_{l,n,p}).
\end{equation}
%
%
%
%
%
%
% \begin{align}
%\theta  _{2.1.1}&=\int_{[0,1]^{2}}(s_{1}s_{2})^{-1-p}\int_{0}^{\infty } | s_{1}u {(x)}  | ^{\beta } \1_{\{s_{1}\leq s_{2}a_k(x,l)\}}\,dx\,ds_{1}\,ds_{2}\\
%&=\int_{0}^{\infty }u(x)^{\beta }\int_{0}^{1}s_{2}^{-1-p}\int_{0}^{\min\{1,s_{2}a_k(x,l)\}}s_{1}^{-1-p+\beta }\,ds_{1}\,ds_{2}\,dx\\
%&=\int_{0}^{\infty }u(x)^{\beta }\int_{0}^{1}s_{2}^{-1-p}\Big(\1_{\{s_{2}a_k(x,l)\leq 1\}}\int_{0}^{s_{2}a_k(x,l)}s_{1}^{-1-p+\beta }\,ds_{1} \\ 
%&\hspace{5cm}+\1_{\{s_{2}a_k(x,l)\geq 1\}}\int_{0}^{1}s_{1}^{-1-p+\beta }ds_{1} \Big)\,ds_{2}\,dx\\
%&\leq K\int_{0}^{\infty }u(x)^{\beta }\int_{0}^{1}s_{2}^{-1-p} \Big(\1_{\{s_{2}a_k(x,l)\leq 1\}}(s_{2}a_k(x,l))^{-p+\beta } 
%+\1_{\{s_{2}a_k(x,l)\geq 1 \}} \Big)\,ds_{2}\,dx\\
%&\leq K\int_{0}^{\infty }u(x)^{\beta } \Big(\int_{0}^{\min\{1,a_k(x,l)^{-1}\}}a_k(x,l)^{\beta -p}s_{2}^{-1-2p+\beta }ds_{2} \\
%&\hspace{4cm}+\1_{\{a_k(x,l)^{-1}\leq 1\}}\int_{a_k(x,l)^{-1}}^{1}  s_{2}^{-1-p}ds_{2} \Big)\,dx\\
%&\leq K \int_{0}^{\infty }u(x)^{\beta }\bigg( a_{k}(x,l)^{\beta -p}\big( \1_{\{a_{k}(x,l)^{-1}\leq 1\}} a_{k}(x,l)^{-(\beta -2p)}\\
%&\hspace{6cm}+\1_{\{a_{k}(x,l)^{-1}\geq 1\}} \big)+\1_{\{a_{k}(x,l)^{-1}\leq 1\}}(1+a_{k}(x,l)^{-(-p)})\big)\,dx\\
%&\leq K \int_{0}^{\infty }u(x)^{\beta }\bigg(\1_{\{u_{k}(x+l)\geq u_{k}(x)\}}\left( 2\left( \frac{u_{k}(x+l)}{u_{k}(x)} \right)^{p}+1 \right)+\1_{\{u_{k}(x+l)\leq u_{k}(x)\}}\left( \frac{u_{k}(x+l)}{u_{k}(x,l)} \right)^{\beta -p}\bigg)\,dx\\
%&\label{theta2.1.1}\leq K(I_{l,n,0}+I_{l,n,p}),
%\end{align}
We have  
\begin{align}
\theta(l)_{2.1.2}&=\int_{[0,1]^{2}}(s_{1}s_{2})^{-1-p}\Big(\int_{0}^{\infty } | s_{2}u (x+l)   | ^{\beta }\1_{\{s_{2}\leq s_{1}a_k(x,l)^{-1}\}}\,dx\Big)\,ds_{1}\,ds_{2}\\
&=\int_{0}^{\infty } |u (x+l)|^{\beta }\Big(\int_{0}^{1}s_{1}^{-1-p}\int_{0}^{\min\{1,s_{1}a_k(x,l)^{-1}\}}s_{2}^{-1-p+\beta }\,ds_{2}\,ds_{1}\Big)\,dx,\label{eq:6736}
\end{align}
and the parenthesis in \eqref{eq:6736}  is estimated as follows 
\begin{align}
{}& \int_{0}^{1}s_{1}^{-1-p}\int_{0}^{\min\{1,s_{1}a_k(x,l)^{-1}\}}s_{2}^{-1-p+\beta }\,ds_{2}\,ds_{1}\\
&\leq K\Big(\int_{0}^{1}\1_{\{s_{1}\leq a_k(x,l)\}}s_{1}^{-1-p}(s_{1}a_k(x,l)^{-1})^{-p+\beta }\,ds_{1} 
%\\ &\hspace{3cm}
+\int_{0}^{1}\1_{\{s_{1}\geq  a_k(x,l)\}}s_{1}^{-1-p} \,ds_{1}\Big)\\
&\leq K\Big(a_k(x,l)^{p-\beta }\int_{0}^{\min\{1,a_k(x,l)\}}s_{1}^{-1-2p+\beta }\,ds_{1}
  % \\ &\hspace{5cm}
 +\1_{\{a_k(x,l)\leq 1\}}\int_{a_k(x,l)}^{1}s_{1}^{-1-p}\,ds_{1}\Big)\\
&\leq K \Big(a_k(x,l)^{p-\beta }\1_{\{a_k(x,l)\geq 1\}}+2a_k(x,l)^{-p}\1_{\{a_k(x,l)\leq 1\}}\Big),
\end{align}
which shows 
\begin{equation}
\label{theta2.1.2} \theta (l)_{2.1.2} \leq  K(I_{l,n,0}+I_{l,n,p}).
\end{equation}
We also have $\theta(l)_{2.2}=\theta(l)_{2.2.1}+\theta(l)_{2.2.2}$ with
\begin{align}
\theta (l)_{2.2.1}&=\int_{[0,1]^{2}}s_{1}^{-1-p}s_{2}^{-1-p}\Big(\int_0^\infty\1_{\{s_{1}\leq s_{2}a_k(x,l)\}}s_{1} |u (x)|s_{2}^{\beta -1}|u (x+l)|^{\beta -1}\,dx\Big)\,ds_{1}\,ds_{2}\\
&\leq  \int_{0}^\infty u(x) u (x+l)^{\beta -1}\Big(\int_{0}^{1}s_{2}^{\beta -2+p}\int_{0}^{s_{2}a_k(x,l)}s_{1}^{-p}\,ds_{1}\,ds_{2}\Big)\,dx\\
&\leq K \int_{0}^\infty |u(x)|  |u (x+l)|^{\beta -1}\Big(\int_{0}^{1}s_{2}^{\beta -2+p} (s_{2}a_k(x,l))^{1-p}\,ds_{2}\Big)\,dx\\
&\leq K\int_{0}^\infty |u(x)| |u (x+l)|^{\beta -1}\Big(a_k(x,l)^{1-p}\int_{0}^{1}s_{2}^{\beta -1}\,ds_{2}\Big)\,dx\\
&\leq K\int_{0}^\infty |u(x)|^{p} |u (x+l)|^{\beta -p}\,dx \leq K I_{l,n,p},   \label{theta2.2.1}
\end{align}
and 
\begin{align}
\theta(l)_{2.2.2}&=\int_{[0,1]^{2}}s_{1}^{-1-p}s_{2}^{-1-p}\int_0^\infty \1_{\{s_{2}\leq s_{1}a_k(x,l)^{-1}\}}s_{1} ^{\beta -1}
|u (x)|^{\beta -1}s_{2} |u (x+l)| \,dx\,ds_{1}\,ds_{2}\\
&=\int_0^\infty	  |u (x)|^{\beta -1}|u (x+l)|\Big[\int_{0}^{1}s_{1}^{-2-p+\beta } \Big(\1_{\{s_{1}a_k(x,l)^{-1}\leq 1\}}\int_{0}^{s_{1}a_k(x,l)^{-1}}s_{2}^{ -p}\,ds_{2} 
\\  &
 \phantom{=\int_0^\infty	  |u (x)|^{\beta -1}|u (x+l)|\int_{0}^{1}s_{1}^{-2-p+\beta \Big(} \Big(}
+\1_{\{s_{1}a_k(x,l)^{-1}\geq  1\}}\int_{0}^{1}s_{2}^{ -p}ds_{2} \Big)\,ds_{1}\Big]\,dx.\quad \label{eq:7364}
\end{align}
The  bracket in \eqref{eq:7364}  is estimated as follows 
\begin{align}
{}& \int_{0}^{1}s_{1}^{-2-p+\beta } \Big(\1_{\{s_{1}a_k(x,l)^{-1}\leq 1\}}\int_{0}^{s_{1}a_k(x,l)^{-1}}s_{2}^{ -p}\,ds_{2} 
+\1_{\{s_{1}a_k(x,l)^{-1}\geq  1\}}\int_{0}^{1}s_{2}^{ -p}ds_{2} \Big)\,ds_{1}
\\ &\leq K \int_{0}^{1}s_{1}^{-2-p+\beta }\1_{\{s_{1}a_k(x,l)^{-1}\leq 1\}}s_{1}^{1-p}a_k(x,l)^{p-1}ds_{1} 
%\\ &\hspace{6cm}
+\1_{\{a_k(x,l)<1\}}\int_{a_k(x,l)}^{1}s_{1}^{-2-p+\beta } ds_{1} \Big)\\
&\leq Ka_k(x,l)^{p-1}\int_{0}^{\min\{1,a_k(x,l)\}}s_{1}^{-1-2p+\beta }  ds_{1}
 %\\ &\hspace{6cm}
 +\1_{\{a_k(x,l)\leq 1\}} a_k(x,l)^{-1-p+\beta } ds_{1} \Big)\\
&\leq K\left( a_k(x,l)^{p-1} \1_{\{a_k(x,l)\geq 1\}}+a_k(x,l)^{-1-p+\beta }\1_{\{a_k(x,l)\leq 1\}} \right),
\end{align}
which shows 
\begin{equation}\label{theta2.2.2}
 \theta(l)_{2.2.2} \leq K\left( I_{l,n,0}+I_{l,n,p} \right).
\end{equation}
Combining the estimates \eqref{theta1}, \eqref{theta1.1}, \eqref{theta1.2}, \eqref{theta2.1.1}, \eqref{theta2.1.2}, 
\eqref{theta2.2.1} and \eqref{theta2.2.2}, and applying now 
Lemma~\ref{helpLemma}(i)--(ii) we obtain the desired assertion.
\end{proof}

\section*{Acknowledgements}
We would like to thank Donatas Surgailis, who helped us to understand the limit theory for discrete moving 
averages.

\bibliographystyle{chicago}
 
\end{document}